\newtheorem{theorem}{Theorem}
\newtheorem{proposition}[theorem]{Proposition}
\newtheorem{lemma}[theorem]{Lemma}
\newtheorem{definition}[theorem]{Definition}
\newtheorem{corollary}[theorem]{Corollary}
\newtheorem{remark}[theorem]{Remark}
\newcommand{\CP}{\mathbb{CP}}
\newcommand{\CC}{\mathbb{C}}
\newcommand{\RR}{\mathbb{R}}
\newcommand{\ZZ}{\mathbb{Z}}
\newcommand{\RP}{\mathbb{RP}}
\newcommand{\U}{{\rm{U}}}
\newcommand{\LB}{{\rm{LB}}}
\renewcommand{\H}{\mathcal{H}}
\newcommand{\w}{\omega}
\newcommand{\W}{\Omega}
\renewcommand{\i}{i}
\newcommand{\ol}{\overline}
\newcommand{\lra}{\longrightarrow}
\newcommand{\lras}{\,\longrightarrow\,}
\newcommand{\set}{\,|\,}
\numberwithin{equation}{section}
\numberwithin{theorem}{section}
\begin{document}
\bibliographystyle{alpha} 
\title[Conformal symmetries]{Conformal symmetries of self-dual\\[2pt]
hyperbolic monopole metrics}
\author{Nobuhiro Honda}
\address{Department of Mathematics, Tokyo Institute of Technology, O-okayama,  Tokyo, Japan}
\email{honda@math.titech.ac.jp}
\author{Jeff Viaclovsky}
\address{Department of Mathematics, University of Wisconsin, Madison, 
WI, 53706}
\email{jeffv@math.wisc.edu}
\thanks{The first author has been partially supported by the Grant-in-Aid for Young Scientists  (B), The Ministry of Education, Culture, Sports, Science and Technology, Japan. The second author 
has been partially supported by the National Science Foundation under 
grant DMS-0804042.}
\begin{abstract}
We determine the group of conformal automorphisms 
of the self-dual metrics on $n \# \CP^2$ due to LeBrun 
for $n \geq 3$, and Poon for $n=2$.
These metrics arise from an ansatz involving a circle 
bundle over hyperbolic three-space $\H^3$ minus a finite number 
of points, called monopole points. We show that for $n \geq 3$ connected sums, any 
conformal automorphism is a lift of an isometry of $\H^3$ which
preserves the set of monopole points.
Furthermore, we prove that for $n = 2$, such lifts form a subgroup of 
index $2$ in the full automorphism group, which we show is a 
semi-direct product $( \U(1) \times  \U(1)) \ltimes {\rm{D}}_4$, 
the dihedral group of order $8$.
\end{abstract}
\date{February 11, 2009}
\maketitle
\setcounter{tocdepth}{1}
\vspace{-5mm}
\tableofcontents

\section{Introduction}
In \cite{Poon1986} and \cite{Poon1992}, Yat-Sun Poon found examples of 
self-dual conformal classes on the connected sums
$\CP^2 \# \CP^2$ and $3 \#\CP^2$
using techniques from algebraic geometry. In \cite{LeBrun1991}, Claude LeBrun gave a 
more explicit construction 
of $\U(1)$-invariant self-dual conformal classes on $n \# \CP^2$
for any $n$. Briefly, the idea is to choose $n$ distinct points 
$\{ p_1 , \dots, p_n \}$ in hyperbolic $3$-space $\H^3$, 
and consider a certain $\U(1)$-bundle $X_0 \rightarrow M_0$, 
where $M_0 = \H^3 \setminus \{ p_1, \dots, p_n \}$. 
A scalar-flat K\"ahler metric is written explicitly on $X_0$ in terms 
of a connection $1$-form, and extends to the metric completion
of $X$ of $X_0$, which is biholomorphic to $\CC^2$ blown 
up at $n$ points along a line. This metric conformally compactifies 
to give a self-dual conformal class on $\hat{X} = n \# \CP^2$,
which we denote by $[g_{\LB}]$. 
It turns out that any hyperbolic isometry which 
preserves the set of monopole points lifts to 
a conformal automorphism of $(n \# \CP^2, [g_{\LB}])$.
The main result of this paper is that the 
converse is also true for $n \geq 3$, and when $n=2$, such lifts form a 
subgroup of index $2$ in the full conformal group.
\begin{theorem}
\label{main1}
Let $n \geq 3$, and $[g_{\LB}]$ be any LeBrun self-dual conformal
class on $\hat{X} = n \# \CP^2$. A map $\Phi: \hat{X} \rightarrow \hat{X}$
is a conformal automorphism if and only if it is the
lift of an isometry of $\H^3$ which preserves the
set of monopole points.

For $n = 2$, there is a conformal involution 
$\Lambda : \hat{X} \rightarrow \hat{X}$
with the following property. For any conformal automorphism 
$\Phi: \hat{X} \rightarrow \hat{X}$, exactly one of $\Phi$ or 
$\Phi \circ \Lambda$ is the lift of an isometry of $\H^3$ which 
preserves the set of the two monopole points.
\end{theorem}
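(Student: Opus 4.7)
The plan is to work both on $\hat X$ and on the twistor space $Z$ of $(\hat X,[g_{\LB}])$, using the correspondence between conformal automorphisms of $(\hat X,[g_{\LB}])$ and real holomorphic automorphisms of $Z$. Write $K$ for the Killing field generating LeBrun's bundle $\U(1)$-action (corresponding to a $\CC^*$-action on $Z$), and $\mathfrak{g}$ for the Lie algebra of conformal Killing fields on $(\hat X,[g_{\LB}])$.

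The central step is to show that any conformal automorphism $\Phi$, possibly after composing with an element of the identity component and (for $n=2$) with the involution $\Lambda$, centralizes $K$ and hence normalizes the bundle $\U(1)$-action. I would approach this via the linear action of $\Phi^*$ on $\mathfrak{g}$: the key sub-step is to identify $\mathfrak{g}$ with the Lie algebra of $\H^3$-isometries fixing $\{p_1,\dots,p_n\}$ setwise, extended by the central $\RR\cdot K$. Concretely, for $n\geq 3$ in generic position $\mathfrak{g}=\RR\cdot K$; for the collinear $n$-point case or for $n=2$, $\mathfrak{g}=\RR K\oplus\RR K'$ with $K'$ generating rotation about the common axis. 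A clean way to identify $\mathfrak{g}$ is via the twistor space: conformal Killing fields on $\hat X$ correspond to real holomorphic vector fields on $Z$, and the projective-algebraic descriptions of $Z$ by LeBrun and Poon reduce this to a cohomology computation for $H^0(Z,\Theta_Z)$. One then verifies that $\Phi^*K$ must lie in $\RR K$ (resp.\ $\RR K\oplus\RR K'$ for $n=2$), for instance by characterising $K$ on $Z$ as the generator of the $\CC^*$-action preserving a canonical divisor (the degree shift coming from the $\U(1)$-bundle structure).

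Once $\Phi$ centralizes $K$, it descends to a self-map $\bar\Phi$ of the orbit space $\hat X/\U(1)$, which the LeBrun ansatz identifies with the closed ball $\ol{\H^3}=\H^3\cup S^2_\infty$, the monopole points appearing as images of the isolated $\U(1)$-fixed points of $\hat X$. The hyperbolic metric on the interior is intrinsically encoded in the conformal geometry near the principal orbits---it is the base metric of the ansatz, recoverable from the norm of $K$ and its orthogonal derivatives---so $\bar\Phi$ is a hyperbolic isometry permuting $\{p_1,\dots,p_n\}$, proving the first statement. For $n=2$, the involution $\Lambda$ is constructed from an extra $\ZZ_2$-symmetry of Poon's twistor space (swapping the two $\CC^*$-factors of the $T^2$-action), and the component group of the full automorphism group is identified with $D_4$ directly from the $T^2$-action and its normalizer, yielding the semidirect product $(\U(1)\times\U(1))\ltimes {\rm D}_4$. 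The main obstacle I anticipate is the first identification of $\mathfrak{g}$, equivalently of $H^0(Z,\Theta_Z)$: LeBrun's and Poon's twistor spaces have intricate algebraic structure (fundamental pencils of divisors and, in Poon's case, nodal singularities whose resolution affects infinitesimal automorphisms), making the vector-field cohomology nontrivial to pin down uniformly across all monopole configurations.
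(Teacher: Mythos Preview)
Your approach differs substantially from the paper's, and has a gap in the key reduction step.

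The paper does \emph{not} proceed by first proving that $\Phi$ normalizes the bundle $\U(1)$-action $K$. Instead, for $n\geq 3$ it exploits the linear system $|F|$ on $Z$ (with $F^2=-K_Z$): the crucial fact is that $\dim H^0(Z,F)=4$ precisely when $n\geq 3$, so the induced rational map $\Psi:Z\to\CP^3$ is canonical and is preserved by \emph{every} automorphism of $Z$, with no need to single out $K$ beforehand. The image $\Psi(Z)\cong\CP^1\times\CP^1$ is identified with the minitwistor space of $\H^3$; since the base locus of $|F|$ (which maps to $\partial\H^3$) and the discriminant curves (which correspond to the monopole points) are automatically preserved, one obtains the homomorphism $\rho:\operatorname{Aut}(g_{\LB})\to\operatorname{Aut}(\H^3;p_1,\dots,p_n)$ directly. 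Your route via $H^0(Z,\Theta_Z)$ and $\Phi^*K$ is workable in the non-collinear case (where $\mathfrak g$ is one-dimensional), but in the collinear case with $n\geq 3$ you must distinguish $K$ inside the two-dimensional torus Lie algebra, and the hint ``$K$ preserves a canonical divisor'' is not a proof; the actual distinguishing property (used elsewhere in the paper) is that $K$ generates the \emph{unique} semi-free $\U(1)$-subgroup when $n\geq 3$, which you do not invoke.

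For $n=2$ the gap is more serious. Here $\dim H^0(Z,F)=6$, and $\Psi$ embeds $Z$ birationally as the intersection $\tilde Z$ of two quadrics in $\CP^5$ with four ordinary nodes; the twistor space is one of several real small resolutions of $\tilde Z$. The paper first computes \emph{all} real automorphisms of $\tilde Z$ (sixteen tori), then determines explicitly which small resolution is the twistor space, and finally checks torus-by-torus which automorphisms of $\tilde Z$ lift to that resolution (exactly eight of the sixteen do). The involution $\Lambda$ is an explicit $6\times 6$ matrix interchanging the two node-pairs. Your sketch (``the component group is identified with $D_4$ directly from the $T^2$-action and its normalizer'') misses the essential difficulty: the normalizer of the torus in $\operatorname{Aut}^\sigma(\tilde Z)$ is strictly larger than $\operatorname{Aut}^\sigma(Z)$, and deciding which elements lift through the small resolution is precisely the content of the argument.
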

\begin{remark}{\em
The involution $\Lambda$ arises as follows. 
For $n=2$, there are exactly {\em{two}} semi-free conformal
$S^1$-actions,  which yield a double fibration of 
an open subset of $\hat{X}$ over 
$\H^3 \setminus  \{{\text{two points}}\}$.
The map $\Lambda$ interchanges 
the fibers of these two fibrations. We moreover 
find an $S^1$-family of involutions with the same 
properties, this will be proved in Section~\ref{geometric}.
To visualize this map, it is well-known that $\CP^2 \# \CP^2$ can 
be viewed as a boundary connect sum of two
Eguchi-Hanson ALE spaces (glued along the boundary $\RP^3$-s). 
The involution $\Lambda$ interchanges the Eguchi-Hanson spaces, and 
has an invariant $\RP^3$ (with fixed point set an $S^2$). 
The existence of such an automorphism is not difficult from the 
topological perspective, but finding one that is {\em conformal} 
is highly nontrivial. }
\end{remark}

We will let ${\rm{Aut}}(g)$ denote the conformal automorphism 
group, and ${\rm{Aut}}_0(g)$ denote the identity component. 
Theorem \ref{main1} implies the following. 
\begin{theorem}
\label{main2} Let $[g_{\LB}]$ be any LeBrun 
self-dual conformal class on $\hat{X} = n \# \CP^2$ and $n \geq 2$. 
All conformal automorphisms are orientation preserving.

If the monopole points do not lie on any common geodesic, 
then 
\begin{align}
\label{finitecase}
{\rm{Aut}}(g_{\LB}) = \U(1) \ltimes G, \ {\rm{Aut}}_0(g_{\LB}) = \U(1),
\end{align}
where $G$ is a finite subgroup of ${\rm{O}}(3)$.

If the monopole points all lie on a common hyperbolic 
geodesic, then 
\begin{align}
\label{u1u1}
{\rm{Aut}}_0 (g_{\LB})= \U(1) \times \U(1).
\end{align}
In this case, for $n \geq 3$ the full conformal group is
\begin{align}
\label{sd1}
{\rm{Aut}}(g_{\LB}) = {\rm{Aut}}_0(g_{\LB}) \ltimes  \ZZ_2,   
\end{align}
unless the point are configured symmetrically about a midpoint,
in which case 
\begin{align}
\label{sd2}
{\rm{Aut}}(g_{\LB})=  {\rm{Aut}}_0 (g_{\LB}) \ltimes ( \ZZ_2 \oplus \ZZ_2).
\end{align} 

In the case $n = 2$, \eqref{u1u1} necessarily holds, 
and the full conformal group is 
\begin{align}
\label{sd3}
{\rm{Aut}}(g_{\LB})= {\rm{Aut}}_0(g_{\LB}) \ltimes {\rm{D}}_4, 
\end{align} 
where ${\rm{D}}_4$ is the dihedral group of order $8$. 
\end{theorem}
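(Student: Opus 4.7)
The strategy is to deduce Theorem~\ref{main2} from Theorem~\ref{main1} by analyzing the subgroup $H\subset{\rm{Isom}}(\H^3)$ of hyperbolic isometries preserving the monopole set $\{p_1,\dots,p_n\}$. For $n\geq 3$, Theorem~\ref{main1} yields a short exact sequence
\[
1 \lra \U(1) \lra {\rm{Aut}}(g_{\LB}) \lra H \lra 1,
\]
in which the kernel is the fiberwise bundle action on $X_0\to M_0$ and the surjection records the induced hyperbolic isometry. Thus the computation of ${\rm{Aut}}(g_{\LB})$ reduces to (i) identifying $H$ in each geometric case and (ii) exhibiting a splitting.

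For step (i) the key observation is that any isometry preserving a finite subset of $\H^3$ fixes its hyperbolic center of mass. Hence if the $p_i$ are not collinear, $H$ embeds into the stabilizer of that center, which is ${\rm{O}}(3)$, and $H$ is therefore finite; this gives~\eqref{finitecase} with $G=H$. If the points lie on a common geodesic $\gamma$, then $H$ contains the circle $\U(1)$ of rotations about $\gamma$ as its identity component, which together with the bundle $\U(1)$ yields ${\rm{Aut}}_0(g_{\LB})=\U(1)\times\U(1)$, giving~\eqref{u1u1}. The component group $H/H_0$ in the collinear case is generated by reflections in planes containing $\gamma$ (always in $H$), together with $\pi$-rotations about axes perpendicular to $\gamma$ through the midpoint and reflections in the perpendicular bisector plane of $\gamma$ (present in $H$ precisely when the $p_i$ are symmetric about a midpoint). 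This gives $\ZZ_2$ in the non-symmetric case and $\ZZ_2\oplus\ZZ_2$ in the symmetric case, proving~\eqref{sd1} and~\eqref{sd2}. The required splitting comes from explicitly lifting the isometries to $X_0$ using LeBrun's connection $1$-form.

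For $n=2$, the two points are automatically collinear and symmetric, so the subgroup of ${\rm{Aut}}(g_{\LB})$ consisting of lifts of hyperbolic isometries is ${\rm{Aut}}_0(g_{\LB})\ltimes(\ZZ_2\oplus\ZZ_2)$, and by Theorem~\ref{main1} it sits as an index-$2$ subgroup of the full conformal group with coset representative $\Lambda$. The resulting component group has order $8$; to see it is ${\rm{D}}_4$ rather than an abelian group of that order, I would compute the product of $\Lambda$ with an end-swapping $\pi$-rotation, which by the remark interchanges the two semi-free $S^1$-actions on $\hat X$, and verify that this element has order $4$. This forces the dihedral relation and yields~\eqref{sd3}.

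Finally, for the orientation statement, the bundle $\U(1)$-action and lifts of orientation-preserving hyperbolic isometries are visibly orientation-preserving on $\hat X$, while an orientation-reversing hyperbolic isometry reverses the base orientation but, by the way the connection $1$-form in LeBrun's ansatz transforms, also reverses the fiber orientation, so the total map on $\hat X$ is orientation-preserving; $\Lambda$ is orientation-preserving by direct inspection in the Eguchi--Hanson description of the remark. I expect the principal obstacles to be the ${\rm{D}}_4$ identification in the $n=2$ case and the orientation bookkeeping for lifts of orientation-reversing isometries, both of which require tracking how the competing semi-free $S^1$-actions and the connection $1$-form transform under these symmetries.
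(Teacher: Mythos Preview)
Your overall strategy matches the paper's: Theorem~\ref{main2} is indeed deduced from Theorem~\ref{main1} together with the classification of hyperbolic isometries preserving the monopole set (the paper's Proposition~2.8) and the explicit construction of liftings (Propositions~2.5, 2.10, 2.11 and Theorem~3.8). Your center-of-mass argument for the non-collinear case and your analysis of $H/H_0$ in the collinear case are exactly what the paper does.

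Two points deserve comment. First, your ${\rm D}_4$ identification is incomplete: exhibiting an element of order~$4$ in an order-$8$ group containing $\ZZ_2\oplus\ZZ_2$ still leaves both ${\rm D}_4$ and $\ZZ_4\times\ZZ_2$ as possibilities. The paper (Proposition~6.8) instead observes that the component group is non-abelian---verified by an explicit $6\times 6$ matrix computation in the twistor model---and then notes that the quaternion group $Q_8$ is excluded because it contains no $\ZZ_2\oplus\ZZ_2$. Your heuristic that $\Lambda$ swaps the two semi-free $S^1$-actions is correct and can be made into a non-abelianness argument for the \emph{component} group, but you would need to check that conjugation by $\Lambda$ genuinely permutes the cosets of ${\rm Aut}_0$ nontrivially, not merely the elements of ${\rm Aut}_0$ itself.

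Second, for orientation the paper takes a shorter route than yours: since $n\#\CP^2$ has signature $n\neq 0$ for $n\geq 1$, the Hirzebruch signature theorem forbids any orientation-reversing diffeomorphism outright (see the remark closing Section~3). Your lift-by-lift bookkeeping is correct---it is exactly Proposition~2.5 for lifts of hyperbolic isometries---but the topological argument handles $\Lambda$ as well without appeal to the Eguchi--Hanson picture.
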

The symmetry condition in the case of \eqref{sd2} is, 
more precisely, that there exists an extra hyperbolic 
reflection preserving the set of monopole which fixes only a 
midpoint on the common geodesic.
We will give explicit generators for each of the finite 
subgroups appearing in the semi-direct products \eqref{sd1}-\eqref{sd3}, 
see Theorem \ref{summarytheorem}.

 We next give a brief outline of the paper. 
We review the construction of LeBrun metrics in Section~\ref{Lansatz},
and we will detail the procedure for 
lifting hyperbolic isometries to conformal 
automorphisms of the LeBrun metrics. 
In Section~\ref{explicit}, we present an explicit 
form of the LeBrun metrics in the toric case, 
and discuss the extra involution in the 
case $n =2$. In Subsection~\ref{summary}, we give a
summary of the results, and give a short discussion 
of the fixed point set of involutions and invariant 
sets, and the action on cohomology.

The remainder of the paper will use twistor methods to prove 
that there are no other conformal 
automorphisms. Section \ref{twistor} will cover the case of $n \geq 3$, 
while Section~\ref{2CP2} will cover the case when $n =2$. 
The case of $n \geq 3$ is relatively easy, since in this case a (rational) 
quotient map for
the $\mathbb C^*$-action on the twistor space corresponding to the semi-free 
$\U(1)$-action 
is induced by a {\em{complete}} linear system, which implies
that any automorphism descends to the quotient space.
For $n = 2$, this is not true, and for this reason we instead use 
Poon's model of the twistor space, which is a small resolution of 
the intersection of two quadrics in $\CP^5$, see Section~\ref{2CP2}. 
In Subsection \ref{ss:autoproj}, 
we show that the holomorphic automorphisms of the intersection of the 
two quadrics which commute with the real structure consist of 16 tori. 
In Subsection \ref{ss:detsr}, we determine explicitly which small 
resolutions actually give the twistor space.
Then in Subsection \ref{detconf}, we show that the conformal automorphism 
group of Poon's metric consists of 8 tori, by explicitly determining which 
automorphisms among the 16 tori lift to the small resolutions obtained 
in Subsection \ref{ss:detsr}. Finally, we interpret these automorphisms 
geometrically in Section~\ref{geometric}, focusing on the 
involution $\Lambda$ when $n=2$.

 We could have alternatively started the paper with the sections 
on twistor theory-- this completely determines the automorphism group using 
only algebraic methods. However, 
one would like to understand the automorphisms geometrically, so 
we begin with the metric definition. From this perspective, it is 
easier to visualize the automorphisms for $n \geq 3$, as they are lifts 
of hyperbolic isometries. However, the existence of the extra conformal 
involution for $n =2$ is not at all obvious from the metric 
perspective (in fact we first discovered it from the twistor viewpoint). 

\subsection{Acknowledgements}
The authors are very grateful to Claude LeBrun for 
valuable discussions on his hyperbolic ansatz. 
The authors would also like to thank Simon Donaldson 
and Simon Salamon for insightful remarks. 
\section{Hyperbolic monopole metrics}
\label{Lansatz}
 We briefly recall the construction of LeBrun's self-dual 
hyperbolic monopole metrics from \cite{LeBrun1991}.
Consider the upper half-space model of hyperbolic 
space
\begin{align}
\H^3 = \{  (x,y,z) \in \RR^3,  z > 0 \},
\end{align}
with the hyperbolic metric $g_{\H^3} = z^{-2} ( dx^2 + dy^2 + dz^2)$. 
Choose $n$ distinct points  $p_1, \dots, p_n$ in 
$\H^3$, and let $P = p_1 \cup \dots \cup p_n$. 
Let $\Gamma_{p_j}$ denote the fundamental solution for the hyperbolic 
Laplacian based at $p_j$ with normalization 
$\Delta \Gamma_{p_j} = -2 \pi \delta_{p_j}$,
and let 
$V = 1 + \sum_{i = 1}^n  \Gamma_{p_i}$.
Then $* dV$ is a closed $2$-form on $\H^3 \setminus P$,
and $(1/ 2 \pi)[* dV]$ is an integral class in $
H^2 ( \H^3 \setminus P, \ZZ )$.
Let $\pi: X_0 \rightarrow \H^3 \setminus P$ 
be the unique principal $\U(1)$-bundle determined by the 
the above integral class.
By Chern-Weil theory, there is a connection form $\w \in H^1(X_0, \i \RR)$
with curvature form $\i (* dV)$. LeBrun's metric is defined by 
\begin{align}
\label{LBmetric}
g_{\LB} = z^2 (  V \cdot g_{\H^3} - V^{-1} \w \odot \w).
\end{align}
Note the minus sign appears, since by convention our connection 
form is imaginary valued.
We define a larger manifold $X$ by attaching points $\tilde{p_j}$ 
over each $p_j$, and by attaching an $\RR^2$ at $z = 0$. 
The space $X$ is non-compact, and has the topology 
of an ALE space. Adding the point at infinity will result in a 
compact manifold $\hat{X}$. 
\begin{remark}{\em Choosing a different connection form will 
result in the same metric, up to diffeomorphism, 
see the proof of Proposition \ref{gebe} below. }
\end{remark}
We summarize the main properties of $(X, g_{\LB})$ in 
the following proposition.  
\begin{proposition}[LeBrun \cite{LeBrun1991}]
The metric $g_{\LB}$ extends to $X$ as a smooth
Riemannian metric. 
The space $(X, g_{\LB})$ is asymptotically flat 
K\"ahler scalar-flat with a single end, and is 
biholomorphic to $\CC^2$ blown up  $n$ 
points on a line. By adding one point,
this metric conformally
compactifies to a self-dual 
conformal class on the compactification $(\hat{X}, [ g_{\LB}])$, 
which is diffeomorphic to $n \# \CP^2$.
\end{proposition}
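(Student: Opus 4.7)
The plan is to verify the four assertions of the proposition in sequence. On the open subset $X_0 \subset X$, formula \eqref{LBmetric} is manifestly smooth and Riemannian: $V>0$ there, and the apparent minus sign disappears because $\w$ is imaginary-valued. For smoothness across each monopole point $\tilde p_j$, I would use the Green's function expansion $\Gamma_{p_j}(x) \sim \tfrac{1}{2r(x,p_j)}$, where $r$ is hyperbolic distance, together with Stokes' identity $\int_{S^2_\epsilon}*dV = 2\pi$, which forces the $\U(1)$-bundle to have Chern number one on any small sphere around $p_j$. A Gibbons--Hawking-type computation in hyperbolic polar coordinates about $p_j$ (which approach Euclidean polar coordinates) shows that $V g_{\H^3}-V\inv\w\odot\w$ closes up to a smooth metric at $\tilde p_j$, Euclidean to leading order; multiplication by the positive smooth factor $z^2$ preserves smoothness. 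Smoothness across the plane $z=0$ is an analogous local-coordinate check: the Green's function asymptotics give $V \sim (\text{smooth positive})\cdot z\inv$ near that plane, so $z^2 V$ and $z^2 V\inv$ are smooth and positive, and the connection piece collapses cleanly onto the attached $\RR^2$.

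For the K\"ahler scalar-flat structure I would adapt the standard Gibbons--Hawking almost complex structure $J$ to the hyperbolic ansatz: pair the rescaled $dz$ with the fiber direction and $dx$ with $dy$, so that $\W = g_{\LB}(J\cdot,\cdot)$ admits an explicit formula in the adapted orthonormal coframe. One checks that $d\W = 0$ is equivalent to $d\w = \i{*dV}$ together with harmonicity of $V$ on $X_0$; hence $(g_{\LB},J)$ is K\"ahler. Direct computation of the Ricci form using $\Delta_{\H^3} V = 0$ off $P$ gives vanishing scalar curvature. Both $J$ and the circle action extend across the $\tilde p_j$ and across the attached $\RR^2$, promoting the $\U(1)$-action to a holomorphic $\CC^*$-action on the complex surface $X$.

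To identify the complex structure, the $\CC^*$-quotient off its fixed loci is essentially $\H^3$; one constructs explicit local holomorphic coordinates near generic points and near $z = 0$ that assemble to a holomorphic map $X\to\CC^2$ collapsing the circle fibers over $z = 0$ to a complex line, and Chern number one at each $p_j$ forces this map to be the blow-up at $n$ points lying on that line. The asymptotics $V = 1 + O(e^{-r})$ as hyperbolic distance $r\to\infty$ give $(X,g_{\LB})$ an ALE end modelled on flat $\CC^2$, so $X\cong \RR^4\,\#\, n\,\ol{\CP^2}$ in the complex orientation.

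Conformal compactification to $\hat X$ then follows a standard template for scalar-flat K\"ahler ALE $4$-manifolds: multiplying by an explicit inverse-distance-squared factor in ALE coordinates yields a Riemannian metric that extends smoothly across a single added point at infinity. Since scalar-flat K\"ahler surfaces are anti-self-dual in the complex orientation, reversing orientation produces the self-dual conformal class $[g_{\LB}]$ on $\hat X\cong n\,\#\,\CP^2$. I expect the principal technical obstacles to be the two local smoothness checks, at each $\tilde p_j$ and along $z = 0$, where the Chern number of the bundle must be matched exactly to the residue of the hyperbolic Green's function so that the metric really closes up to a smooth Euclidean model in local coordinates; and the precise ALE decay estimate at infinity needed to justify smoothness of the conformal compactification.
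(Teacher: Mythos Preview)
The paper does not actually prove this proposition: it is stated as a summary of LeBrun's results and simply attributed to \cite{LeBrun1991}, with no proof environment following it. So there is no ``paper's own proof'' to compare your outline against; your sketch is in the spirit of LeBrun's original argument.

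That said, your analysis near $z=0$ contains a genuine error. You assert that the Green's function asymptotics give $V\sim(\text{smooth positive})\cdot z^{-1}$ near the boundary plane, but this is false: approaching $z=0$ means going to \emph{infinity} in hyperbolic distance, and hyperbolic Green's functions decay (exponentially) there, so in fact $V\to 1$ as $z\to 0$. The correct mechanism is different. Writing out the metric using $g_{\H^3}=z^{-2}(dx^2+dy^2+dz^2)$,
\[
g_{\LB}=V(dx^2+dy^2+dz^2)+z^2 V^{-1}\,|\w|^2,
\]
so near $z=0$ with $V\to 1$ this tends to $dx^2+dy^2+(dz^2+z^2\,d\theta_1^2)$. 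The pair $(z,\theta_1)$ are then polar coordinates in a plane, and the circle fibers collapse smoothly onto the attached $\RR^2=\{(x,y)\}$, giving a flat $\RR^4$ model. Your proposed reasoning---``$z^2V$ and $z^2V^{-1}$ are smooth and positive''---would instead force the metric to degenerate at $z=0$, since $z^2V\to 0$ if $V$ stays bounded (and also if $V\sim z^{-1}$). The remaining parts of your outline (Gibbons--Hawking local model at the monopole points, the K\"ahler/scalar-flat computation, exponential decay of $V-1$ giving an asymptotically flat end, and the anti-self-dual/self-dual orientation flip) are on the right track.
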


We next review some facts from bundle theory, which will then 
be applied to LeBrun's metrics. 
\subsection{Bundle methods}
In this section $\U(1) \rightarrow X_0 \overset{\pi}{\rightarrow} M$ will be 
a principal $\U(1)$-bundle over a connected oriented base manifold $M$. 
The group $\U(1)$ acts on $X_0$ from the right, we will denote 
this action by $R_{g}$ for $g \in \U(1)$. 
Recall that a connection $\w \in \Lambda^1(X_0; \i \RR)$ is a $1$-form 
on $X_0$ with values 
in the Lie algebra of $\U(1)$. The connection satisfies 
(i) $\w$ restricted to the fiber $\pi^{-1}(z)$ is 
$\i \cdot d \theta$, the 
Maurer-Cartan form on $\U(1)$, and (ii) $R_g^* \w = \w$. 
Since the group is abelian,  
the {\em{curvature $2$-form}} of the connection is given 
by $\W_{\w} = d \w \in H^2(X, \i \RR)$, and this forms descends to $M$. 
\begin{definition}{\em
The connections $\w$ and $\w'$ are said to be {\em{gauge equivalent}} 
if there exists a function $f : M \rightarrow \RR$ such 
that $\w = \w' +  \i \cdot df$. }
\end{definition}
\begin{remark}{\em
\label{h1}
If $\W_{\w} = \W_{\w'}$ then 
$d( \w - \w') = 0$. If $H^1(M; \RR) = 0$, then 
$\w - \w' = \i \cdot df$, so $\w$ and $\w'$ are gauge equivalent. }
\end{remark}
\begin{definition} \label{def:g-equiv}
{\em
The connections $\w$ and $\w'$ are said to be {\em{bundle equivalent}} if there exists a 
fiber-preserving map 
$B: X_0 \rightarrow X_0$ covering the identity map of $M$, 
that is $\pi \circ B = \pi$, and which commutes with the 
right action of $\U(1)$, satisfying $B^* \w' = \w$. } 
\end{definition}
\begin{proposition}
\label{gebe}
If the connections $\w$ and $\w'$ are gauge equivalent then 
they are bundle equivalent. The converse holds if $H^1(M, \RR) = 0$.  
\end{proposition}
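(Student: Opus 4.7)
The plan is to handle the two directions separately: the forward direction by producing an explicit bundle equivalence, and the converse by reducing to Remark~\ref{h1}.

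For the forward direction, suppose $\w = \w' + \i\cdot df$ for some $f:M\to\RR$. The natural candidate for the bundle equivalence is fiberwise rotation by the angle $f(\pi(x))$, namely
\[
B(x) \;=\; x \cdot e^{\i f(\pi(x))}.
\]
The conditions $\pi\circ B = \pi$ and $B\circ R_g = R_g\circ B$ are immediate, the second using that $\U(1)$ is abelian. The identity $B^*\w' = \w$ is then a routine check in a local trivialization $\pi\inv(U)\cong U\times \U(1)$ with fiber coordinate $\theta$: in such coordinates one has $\w' = \i\,d\theta + \i\,\pi^*\alpha'$ for some real $1$-form $\alpha'$ on $U$, and $B$ sends $\theta$ to $\theta + f\circ\pi$, so $B^*\w' = \i\,d(\theta + f\circ\pi) + \i\,\pi^*\alpha' = \w' + \i\,\pi^*df = \w$.

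For the converse, suppose $H^1(M,\RR)=0$ and $B$ is a bundle equivalence with $B^*\w' = \w$. The key observation is that the curvatures of $\w$ and $\w'$ agree, which follows from
\[
\W_\w \;=\; d\w \;=\; d(B^*\w') \;=\; B^*\W_{\w'} \;=\; \W_{\w'},
\]
where the last equality uses that $\W_{\w'}$ is basic, i.e., of the form $\pi^*F$ for some $2$-form $F$ on $M$, together with $\pi\circ B = \pi$. Gauge equivalence then follows immediately from Remark~\ref{h1}.

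There is no serious obstacle in either direction. The forward direction is a one-line calculation in local coordinates, and the converse reduces directly to Remark~\ref{h1}. The only point worth noting inside that remark is that $\w-\w'$ is a closed \emph{basic} $1$-form on $X_0$: it annihilates vertical vectors because both $\w$ and $\w'$ restrict to the Maurer--Cartan form on fibers, and is $\U(1)$-invariant by property (ii) of a connection; it is then exact on $M$ because $H^1(M,\RR)=0$.
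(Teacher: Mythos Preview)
Your proof is correct and follows essentially the same approach as the paper: the forward direction uses the same fiberwise rotation $B(x)=x\cdot e^{\i f}$ verified in a local trivialization, and the converse is the same curvature computation reducing to Remark~\ref{h1}. Your added remark that $\w-\w'$ is basic (because both connections restrict to the Maurer--Cartan form on fibers) is a helpful clarification that the paper leaves implicit.
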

\begin{proof}
If the connections are gauge equivalent, then $ \w =  \w' +  \i \cdot d f$.
Define a bundle map $B : X_0 \rightarrow X_0$ by
$B v =  v \cdot e^{ i f}$ (right action).  Letting
$\w_1'$ denote a local connection form on the base, we have
\begin{align}\label{g_to_b}
B^* \w' = B^* (  \w_1' + \i \cdot d \theta) 
=  \w_1' + \i B^* d \theta 
=  \w_1'+ \i( d \theta +  d f )
= \w' +  \i  \cdot d f = \w. 
\end{align}
Conversely, if $B^* \w' = \w$, then 
$\W_{\w} = d \w = d B^* \w' = B^* \W_{\w'}$. 
These are forms on the base, and $B$ covers the 
identity map, so 
$\W_{\w} = \W_{\w'}$, which implies that $\w$ and $\w'$
are gauge equivalent by Remark \ref{h1}. 
\end{proof}
Since $X_0$ is a $\U(1)$-bundle, it has a first Chern 
class $c_1(X_0) \in H^2(M ; \ZZ)$. From the exponential 
sheaf sequence, $H^1(M, \mathcal{E}^*) = H^2(M ; \ZZ)$, 
so $X_0$ is determined up to smooth bundle equivalence by $c_1(X_0)$. 
By Chern-Weil theory, the image of $c_1(X_0)$ in $H^2(M; \i \RR)$ 
is cohomologous to $\W_{\w}$, for any connection $\w$ on 
$X_0$.
\begin{proposition}
\label{liftprop}   
Assume that $H^1(M;\ZZ) = 0$, and that $H^2(M; \ZZ)$ has no torsion.  
Let $\w$ be a connection on $X_0$,
and $\phi: M \rightarrow M$ an orientation preserving diffeomorphism satisfying
$\phi^* \W_{\w} = \W_{\w}$.  
Then there exists 
an equivariant lift of $\phi$ to $\Phi: X_0 \rightarrow 
X_0$ satisfying $\Phi^* \omega = \omega$. 
If $\phi: M \rightarrow M$ is an orientation reversing diffeomorphism satisfying
$\phi^* \W_{\w} = -\W_{\w}$,  then there exists 
such a lift satisfying $\Phi^* \omega = - \omega$. 
These lifts are unique up to right action by a 
constant in $\U(1)$. In both cases, $\Phi$ is orientation 
preserving. 
\end{proposition}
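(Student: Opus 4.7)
The plan is to construct $\Phi$ in two stages: first extract a bundle map $\Phi_0 : X_0 \to X_0$ covering $\phi$ from a Chern-class comparison, then adjust $\Phi_0$ by right-multiplication by a $\U(1)$-valued function on $M$ (a gauge transformation) to enforce the connection condition $\Phi^*\w = \pm\w$.

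In the orientation-preserving case, consider the pulled-back bundle $\phi^* X_0 \to M$ with connection $\phi^* \w$, whose curvature is $\phi^* \W_{\w} = \W_{\w}$. By Chern--Weil the classes $c_1(\phi^* X_0)$ and $c_1(X_0)$ agree in $H^2(M;\RR)$, and the torsion-free hypothesis on $H^2(M;\ZZ)$ upgrades this to an integral equality. Since $\U(1)$-bundles on $M$ are classified by $H^2(M;\ZZ)$ via $H^1(M,\mathcal{E}^*)$, we get a smooth bundle isomorphism $\phi^* X_0 \cong X_0$, equivalently an equivariant bundle map $\Phi_0: X_0 \to X_0$ covering $\phi$. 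The forms $\Phi_0^* \w$ and $\w$ are connections on $X_0$ with the same curvature $\W_{\w}$, so by Remark \ref{h1} they differ by $\i\cdot \pi^* df$ for some $f: M \to \RR$; composing $\Phi_0$ on the right with the action of $e^{-\i f \circ \pi}$ cancels the gauge term, by the local computation \eqref{g_to_b}, and produces the desired $\Phi$ with $\Phi^* \w = \w$.

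The orientation-reversing case is analogous but carries a twist. Since the vertical vector field $\xi$ generating the $\U(1)$-action satisfies $\w(\xi) = \i$, any map with $\Phi^*\w = -\w$ must send $\xi$ to $-\xi$, which forces the anti-equivariance relation $\Phi \circ R_g = R_{g^{-1}} \circ \Phi$. Such anti-equivariant maps over $\phi$ are in natural bijection with $\U(1)$-bundle isomorphisms $\phi^* X_0 \cong \overline{X_0}$, where $\overline{X_0}$ denotes $X_0$ with the inverted $\U(1)$-action and the connection $-\w$. Since $\phi^*\w$ and $-\w$ are connections with the same curvature $-\W_{\w}$, the same Chern-class comparison — now giving $\phi^* c_1(X_0) = -c_1(X_0)$ as integer classes — produces an anti-equivariant $\Phi_0$ covering $\phi$. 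Then $-\Phi_0^*\w$ is a genuine connection on $X_0$ with curvature $\W_{\w}$, and the same gauge correction delivers $\Phi$ with $\Phi^* \w = -\w$.

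For uniqueness, any two lifts $\Phi_1,\Phi_2$ (in either case) differ by $S := \Phi_1^{-1}\circ \Phi_2$, which is ordinarily equivariant over the identity of $M$ (in the second case the two sign twists cancel) and hence of the form $S(v) = v\cdot h(\pi v)$ for some $h: M\to\U(1)$; the condition $S^*\w = \w$ forces $\pi^* d(\log h) = 0$, so $h$ is constant by connectedness of $M$. Finally, $\Phi$ preserves the splitting $TX_0 = H \oplus V$ with $H = \ker\w$, so $d\Phi$ is block diagonal with respect to it: in case 1 both blocks have positive determinant, in case 2 both have negative determinant, and in either case $\Phi$ is orientation preserving. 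The main technical obstacle is the orientation-reversing case, where one must recognize that $\Phi^*\w = -\w$ demands a twisted equivariance and pair $\phi^* X_0$ with $\overline{X_0}$ rather than with $X_0$ itself; the hypothesis that $H^2(M;\ZZ)$ is torsion-free is essential in both cases to promote the real-cohomology Chern-class equality to an integral one.
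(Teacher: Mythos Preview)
Your argument is correct and follows the same route as the paper: pull back the bundle, compare Chern classes to obtain a bundle map covering $\phi$, then gauge-correct using Remark~\ref{h1} and Proposition~\ref{gebe}. Your treatment is in fact more complete in a few places where the paper is terse: you spell out why the torsion-free hypothesis on $H^2(M;\ZZ)$ is needed (to promote the real Chern-class equality to an integral one), you make explicit that $\Phi^*\w=-\w$ forces anti-equivariance and hence that the relevant comparison is with the conjugate bundle $\overline{X_0}$, and you give a cleaner orientation argument via the block decomposition with respect to $TX_0 = \ker\w \oplus V$. These are welcome clarifications rather than a different strategy.
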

\begin{proof}
First assume that $\phi$ is orientation preserving. 
Consider the pull-back bundle  $\phi^* X_0$.
By naturality, 
\begin{align}  c_1( \phi^* X_0) = \phi^* c_1(X_0) = \phi^* [ \W_{\w} ] = [\W_{\w}] = c_1(X_0).
\end{align} 
Consequently, there exists a bundle equivalence $A : \phi^* X_0 \rightarrow X_0$,
which is an equivariant map covering the identity map on $M$. 
Denote by $\pi_2$ the natural map $\pi_2 : \phi^* X_0 \rightarrow X_0$. 
This is summarized in the following diagram. 
\begin{equation}
\label{cd1}
 \CD
X_0@>{A^{-1}}>>\phi^*X_0@>{\pi_2}>>X_0\\
@VVV @VVV @VV{\pi}V\\
M@>{Id}>>M@>{\phi}>>M.\\
 \endCD
 \end{equation}
The pull-back $\w' = (A^{-1})^* \pi_2^* \w$ is a connection on $X_0$. 
Since $\pi_2 \circ A^{-1}$ covers $\phi$, we have
\begin{align}
\W_{\w'} = d \w' = d \big( ( \pi_2 \circ A^{-1})^* \w \big)
=  ( \pi_2 \circ A^{-1})^* \W_{\w} = \phi^* \W_{\w} = \W_{\w}.
\end{align}
From Remark \ref{h1}, it follows that $\w'$ and $\w$ are gauge equivalent.
By Proposition \ref{gebe}, $\w'$ and $\w$ are bundle 
equivalent, so there exists a bundle map $B: X_0 \rightarrow X_0$ 
satisfying $B^* \w' = \w$. 
The desired map is given $\pi_2 \circ A^{-1} \circ B$.
In the construction of the map $B$ in the proof of 
Proposition \ref{gebe} above, there is a freedom 
to replace the function $f$ by $f + c$ for any constant $c$, 
and the uniqueness statement follows. 

 If $\phi$ is orientation reversing, then the pull-back bundle
$\phi^* X_0$ will satisfy $c_1( \phi^* X_0) = - c_1 (X_0)$.
In this case we need to add an additional map identifying 
the bundle with its conjugate bundle using complex 
conjugation, which corresponds geometrically to making a reflection 
in each fiber (such a choice is not canonical). Clearly, this makes the lift orientation 
preserving. 
\end{proof}
\begin{remark}
\label{explrm}
{\em
These lifts can be computed explicitly once 
the transition functions of the bundle are known 
(with respect to some open cover). 
Assume that the bundle is trivialized over a simply 
connected open set $U$, and that $U$ is a $\phi$-invariant set. 
Tracing through the above proof, to find the lift, we 
must first find a function $f : U \rightarrow \RR$ such that 
\begin{align}
\phi^* \omega - \omega = \i \cdot df, 
\end{align}
and the lift is then right multiplication 
by $e^{ i f}$ in each fiber (if $\phi$ is orientation-reversing, 
then we add a reflection in each fiber). The action in other 
coordinate systems is then found using the transition functions. }
\end{remark}
\begin{proposition}
\label{fixedfiber}
Let $p$ be a fixed point of $\phi$. 
If $\phi$ is orientation reversing, then any lift $\Phi$ of $\phi$
fixes exactly $2$ points over $p$.
\end{proposition}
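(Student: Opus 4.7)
The plan is to restrict the lift $\Phi$ to the fiber over $p$ and count the fixed points directly from the identity $\Phi^*\w = -\w$ supplied by Proposition \ref{liftprop}. Since $\pi\circ\Phi = \phi\circ\pi$ and $\phi(p)=p$, the fiber $F_p := \pi^{-1}(p) \cong \U(1)$ is $\Phi$-invariant, so it suffices to study $\Phi_p := \Phi|_{F_p}$ as a self-diffeomorphism of $F_p$.

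Next I would choose any point $v_0 \in F_p$ and trivialize via $v_0 \cdot e^{\i\theta} \leftrightarrow e^{\i\theta}$. Property (i) in the definition of a connection gives $\w|_{F_p} = \i\, d\theta$. Writing $\Phi_p$ in the variable $\theta$ as $\theta \mapsto F(\theta)$, the relation $\Phi_p^*(\i\, d\theta) = -\i\, d\theta$ forces $F'(\theta) \equiv -1$, so $F(\theta) = c - \theta$ for some constant $c \in \RR/2\pi\ZZ$ depending on the chosen lift and trivialization.

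The fixed point equation $\theta \equiv c - \theta \pmod{2\pi}$ then admits exactly the two solutions $\theta = c/2$ and $\theta = c/2 + \pi$, proving the claim. I do not anticipate a serious obstacle: the whole argument rests on the sign in $\Phi^*\w = -\w$, which is precisely what Proposition \ref{liftprop} provides in the orientation-reversing case. As a consistency check in the opposite direction, for an orientation-preserving lift one would instead have $F'(\theta) \equiv +1$, so $\Phi_p$ would be a rigid rotation of the fiber with either $0$ or all fixed points, reflecting the $\U(1)$ ambiguity in the choice of lift.
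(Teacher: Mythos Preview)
Your proposal is correct and follows essentially the same approach as the paper. The paper's proof simply observes that, by the construction in Proposition~\ref{liftprop}, the lift in the orientation-reversing case is built by composing with a reflection in each fiber, and a reflection of a circle has exactly two fixed points; you derive this reflection property more intrinsically from the relation $\Phi^*\w=-\w$ restricted to the fiber, which amounts to the same thing spelled out in coordinates.
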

\begin{proof}
From the above proof, any lift is a reflection in the fiber over
a fixed point. A reflection always has exactly $2$
fixed points. 
\end{proof}
\subsection{Lifts of hyperbolic isometries}
We only give a brief summary in this section;
for background on hyperbolic geometry, see \cite{Ratcliffe}. 
Using the quaternions, write hyperbolic upper half space as
\begin{align}
\H^3 = \{ x + y i + z j 
\set  (x,y,z) \in \RR^3,  z > 0  \}.
\end{align}
The group of hyperbolic isometries is the group
of time-oriented Lorentz transformations ${\rm{SO}}_{+}(3,1)$. 
The identity component is isomorphic to 
${\rm{PSL}}(2, \CC)$; an isomorphism 
can be seen explicitly as follows \cite[Chapter 4]{Ratcliffe}. 
Any orientation preserving hyperbolic isometry $\phi: \H^3 \rightarrow \H^3$
may be written 
\begin{align}
\label{ori} 
\phi(w) = (a w + b)( c w + d)^{-1}, 
\end{align}
with $(a,b,c,d) \in \CC^4$, and $ad - bc =1$, where we use quaternionic multiplication 
and inverse.  Similarly, any orientation reversing hyperbolic 
isometry $\phi: \H^3 \rightarrow \H^3$
may be written
\begin{align} 
\label{rori}
\phi(w) = (a (- \bar{w} ) + b)( c (-\bar{w} ) + d)^{-1},
\end{align}
with $(a,b,c,d) \in \CC^4$, and $ad - bc =1$. 
\begin{proposition} 
\label{hyppoints}
Let $\{p_1, \dots, p_n \} \subset \H^3$, and $n \geq 2$. 
Let $G$ denote  the group of all hyperbolic isometries preserving 
this set of points.
If all points lie on a single hyperbolic 
geodesic $\gamma$, then $G = {\rm{O}}(2)$ acting as rotations 
and reflections about $\gamma$,
unless the points are configured symmetrically about a midpoint, 
in which case $G = {\rm{O}}(2) \oplus {\rm{O}}(2)$
(more precisely, this symmetry condition is that there is 
another reflection preserving the set of points, and 
$G$ is generated by ${\rm{O}}(2)$ and this reflection). 
Finally, if the points do not lie on any common geodesic, 
then $G$ is conjugate to finite subgroup of ${\rm{O}}(3)$.
\end{proposition}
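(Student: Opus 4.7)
The plan is to treat the two cases of the proposition separately.

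For the collinear case, I would first observe that since $n \geq 2$, the geodesic $\gamma$ is uniquely determined by any two of the $p_i$; hence for every $\phi \in G$, $\phi(\gamma)$ is a geodesic containing at least two of the points and therefore coincides with $\gamma$. So $G \subset {\rm{Stab}}(\gamma) \subset {\rm{Iso}}(\H^3)$. Using the description of ${\rm{Iso}}(\H^3)$ from \eqref{ori}--\eqref{rori}, or equivalently as ${\rm{O}}^+(3,1)$ with $\gamma$ corresponding to a timelike $2$-plane, the stabilizer decomposes as a direct product
\[
{\rm{Stab}}(\gamma) \;=\; {\rm{O}}(2) \times (\RR \rtimes \ZZ_2),
\]
where the ${\rm{O}}(2)$-factor consists of rotations and vertical-plane reflections about $\gamma$ (fixing $\gamma$ pointwise) and the second factor is ${\rm{Iso}}(\gamma) \cong \RR \rtimes \ZZ_2$, acting on $\gamma$ by translations and the reflection through a chosen point. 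The ${\rm{O}}(2)$-factor automatically belongs to $G$, so the content is the image of $G$ in ${\rm{Iso}}(\gamma)$. Any non-trivial translation has infinite order and cannot preserve the finite set $\{p_1,\dots,p_n\}$; hence this image is either trivial or a single $\ZZ_2$ generated by the reflection through the midpoint of the configuration, and the latter occurs iff $\{p_1,\dots,p_n\}$ is invariant under that reflection. Assembling the factors gives the structure in the proposition.

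For the non-collinear case, the plan is to produce a common fixed point of $G$ in $\H^3$ and then identify the isotropy group. The function $q \mapsto \sum_{i=1}^n d(q,p_i)^2$ on $\H^3$ is proper and strictly convex along geodesics (the latter because $\H^3$ has non-positive sectional curvature), so it attains a unique minimum $c \in \H^3$. Each $\phi \in G$ permutes the $p_i$, so this function is $G$-invariant, and uniqueness of $c$ forces $\phi(c)=c$. Conjugating by an isometry of $\H^3$ moving $c$ to a standard base point, $G$ embeds into the isotropy subgroup at that point, which is a copy of ${\rm{O}}(3)$ acting on the tangent space via the isotropy representation. Finiteness of $G$ follows from the permutation action $G \to S_n$ on $\{p_1,\dots,p_n\}$: the kernel is the subgroup of isometries fixing all $n$ (non-collinear) monopole points, which is trivial if the $p_i$ do not lie in a common totally geodesic $2$-plane, and otherwise at most $\ZZ_2$, generated by the reflection through that plane. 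In either event $|G|$ is finite.

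The main obstacle is the collinear case: one must verify carefully that ${\rm{Stab}}(\gamma)$ genuinely splits as a direct product (so that the ${\rm{O}}(2)$ factor of rotations/reflections about $\gamma$ is independent of the midpoint reflection), and then identify the possible finite subgroups of ${\rm{Iso}}(\gamma) \cong \RR \rtimes \ZZ_2$ that can stabilize a set of $n \geq 2$ points. Showing that only the trivial group and a single midpoint reflection can appear, and then matching this with the semi-direct-product structure asserted in the statement, is the piece requiring the most attention; the non-collinear case reduces to a routine centroid construction once the permutation representation is used to rule out continuous subgroups.
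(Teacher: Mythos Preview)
Your proposal is correct and essentially self-contained, but it follows a different route from the paper.  The paper's proof is a two-line sketch: it asserts that the collinear case can be read off by direct computation in the M\"obius presentations \eqref{ori}--\eqref{rori}, and for the non-collinear case it first argues that $G$ is finite and then invokes the standard fact (citing \cite[Theorem~5.5.2]{Ratcliffe}) that every finite subgroup of ${\rm SO}_+(3,1)$ is conjugate into ${\rm O}(3)$.  You instead argue geometrically: in the collinear case you analyze the full stabilizer ${\rm Stab}(\gamma)\cong {\rm O}(2)\times(\RR\rtimes\ZZ_2)$ and observe that the projection of $G$ to ${\rm Iso}(\gamma)$ must be finite (hence trivial or $\ZZ_2$); in the non-collinear case you produce the common fixed point directly via the strictly convex function $q\mapsto\sum_i d(q,p_i)^2$ and conclude $G\subset{\rm O}(3)$ without appeal to the general classification of finite subgroups of ${\rm SO}_+(3,1)$.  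Your argument is more explicit and avoids the external citation---indeed, the Ratcliffe result is itself typically proved by a fixed-point/centroid argument of exactly the type you give---while the paper's version is terser but leans on that reference.  One small remark: your decomposition yields $G\cong{\rm O}(2)\times\ZZ_2$ in the symmetric collinear case, which matches the ``more precisely'' clause in the statement; the notation ${\rm O}(2)\oplus{\rm O}(2)$ in the proposition is informal and should be read in that sense.
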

\begin{proof}
This can be proved by a direct computation using the 
presentations (\ref{ori}) and (\ref{rori}).
The proof is finished by noting that {\em{any}} finite subgroup of 
${\rm{SO}}_{+}(3,1)$ is conjugate to a subgroup of ${\rm{O}}(3)$,
see \cite[Theorem 5.5.2]{Ratcliffe}.
\end{proof}
The following proposition shows the lifts obtained
in Proposition \ref{liftprop} yield conformal automorphisms 
of LeBrun's metrics.
\begin{proposition}
\label{hypconf}
If $\phi: \H^3 \rightarrow \H^3$ is a hyperbolic isometry
preserving the set of monopole points, then there exists a
unique $\U(1)$-family of lifts $\Phi$ as in Proposition \ref{liftprop} 
which are orientation preserving conformal automorphisms of 
$(X_0, g_{\LB})$. Furthermore, any such lift extends to a conformal 
automorphism of the compactification $(n \# \CP^2, [g_{\LB}])$. 
\end{proposition}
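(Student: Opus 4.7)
The plan is to apply Proposition \ref{liftprop} and then verify conformality directly from the formula (\ref{LBmetric}). First I would check the topological hypotheses of Proposition \ref{liftprop}: since $M_0 = \H^3 \setminus P$ deformation retracts onto a wedge of $n$ two-spheres, $H^1(M_0;\ZZ) = 0$ and $H^2(M_0;\ZZ) \cong \ZZ^n$ is torsion-free. Next I would verify $\phi^* \W_\w = \epsilon \W_\w$, where $\epsilon = +1$ if $\phi$ is orientation-preserving and $\epsilon = -1$ otherwise: each $\Gamma_{p_j}$ is a function of hyperbolic distance from $p_j$, so whenever $\phi$ permutes $P$ one has $\phi^* \Gamma_{p_j} = \Gamma_{\phi^{-1}(p_j)}$, and hence $\phi^* V = V$; since $\phi$ is an isometry of $g_{\H^3}$ the Hodge star satisfies $\phi^* \circ * = \epsilon \cdot * \circ \phi^*$, giving $\phi^*(\i \cdot *dV) = \epsilon \cdot \i \cdot *dV$. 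Proposition \ref{liftprop} then produces an orientation-preserving lift $\Phi : X_0 \to X_0$ with $\Phi^* \w = \epsilon \w$, unique up to right multiplication by a constant in $\U(1)$.

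Second, I would compute $\Phi^* g_{\LB}$ from (\ref{LBmetric}) term by term. Using $\phi^* V = V$, $\phi^* g_{\H^3} = g_{\H^3}$, and the fact that $\w \odot \w$ is insensitive to the sign in $\Phi^*\w = \pm \w$, the bracketed expression $V \cdot g_{\H^3} - V^{-1}\, \w \odot \w$ is $\Phi$-invariant, while the overall factor $z^2$ pulls back to a positive smooth function. One gets
\begin{align}
\Phi^* g_{\LB} = \frac{\phi^* z^2}{z^2} \, g_{\LB},
\end{align}
so $\Phi$ is a conformal automorphism on $X_0$. Combined with the orientation and uniqueness statements from Proposition \ref{liftprop}, this yields the asserted $\U(1)$-family of conformal lifts.

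For the extension to $\hat{X} = n \# \CP^2$, the locus $\hat{X} \setminus X_0$ consists of the $n$ points $\tilde p_j$ over the monopole points, a copy of $\RR^2$ at $\{z=0\}$ inside the ideal boundary of $\H^3$, and the single added point at infinity. Any $\phi$ in question permutes $\{p_j\}$ and extends to a M\"obius transformation of $\partial \H^3$ preserving the ideal disc corresponding to $\{z=0\}$, so one defines the extension of $\Phi$ on the first two strata by combining $\phi$ with the explicit formulas from Remark \ref{explrm}. Smoothness across the $\U(1)$-fixed loci $\tilde p_j$ and $\{z=0\}$ follows by working in local coordinates coming from the biholomorphism of $X$ with the blow-up of $\CC^2$ at $n$ points on a line, in which both $g_{\LB}$ and $\Phi$ extend smoothly. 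Finally, extension across the point at infinity is an isolated-singularity statement for a conformal map between smooth four-manifolds, handled by elliptic regularity applied to the conformal Killing equation.

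The main obstacle is this last step, specifically the smooth extension across the $\U(1)$-fixed loci where the ansatz (\ref{LBmetric}) degenerates; the conformality on $X_0$ itself is essentially a one-line verification once Proposition \ref{liftprop} is in hand. The right framework for the extension is LeBrun's explicit complex-analytic model of $X$, in which the formula from Remark \ref{explrm} can be rewritten on an open cover adapted to the collapsed fibers.
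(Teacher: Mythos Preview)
Your argument for conformality on $X_0$ is essentially identical to the paper's: verify $\phi^*V = V$ and $\phi^*\Omega_\omega = \pm\Omega_\omega$, invoke Proposition~\ref{liftprop}, and compute $\Phi^*g_{\LB}$ directly from \eqref{LBmetric}. The paper likewise defers the extension to $\hat X$ to LeBrun's original smoothing argument and omits details, so your proposal is at least as complete.

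One small correction to your extension sketch: a general hyperbolic isometry $\phi$ need not preserve ``the ideal disc $\{z=0\}$'' separately from the point at infinity---an inversion in a hemisphere, for instance, sends $\infty$ into $\{z=0\}$. What is true is that $\phi$ extends to a M\"obius transformation of the full ideal boundary $\partial\H^3 \cong \{z=0\}\cup\{\infty\} \cong S^2$, and this entire sphere sits inside $\hat X$ as the $K_1$-fixed locus. Also, for the point at infinity you want a removable-singularity result for conformal \emph{maps}, not the conformal Killing equation; but since $\Phi$ is already known to be a diffeomorphism of $X_0$ conformal for a metric that extends smoothly to $\hat X$, Hartogs-type extension (or simply checking boundedness in the asymptotically flat coordinates) suffices.
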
 
\begin{proof} 
If $\phi$ is a hyperbolic isometry, then
\begin{align}
V \circ \Phi = 1 + \sum_{j=1}^n \Gamma_{\Phi^{-1}(p_j)}.
\end{align}
Since $\phi$ fixes the set of monopole points, 
we have $\phi^* V = V$. 
We chose the connection above so that 
$\W_{\w} = \i (* d V)$. This implies that 
$\phi^* \W_{\omega} = \W_{\omega}$ if $\phi$ is 
orientation preserving, and $\phi^* \W_{\omega} = - \W_{\omega}$ 
if $\phi$ is orientation reversing. 
In either case, we may apply Proposition 
\ref{liftprop} to find a lift 
of $\phi$ satisfying $\Phi^* \omega =\pm \omega$.
By assumption, $\phi^* g_{\H^3} = g_{\H^3}$, so we have
\begin{align}
\begin{split}
\Phi^* g_{\LB} &= (z \circ \Phi)^2 \Big( (V \circ \Phi) \cdot \Phi^* g_{\H^3} 
- (V \circ \Phi)^{-1} \cdot \Phi^*(\omega \odot \omega) \Big)\\
& =  (z \circ \Phi)^2 \Big(  V \cdot g_{\H^3} 
- V^{-1} \omega \odot \omega \Big) =  
\Big(\frac{z \circ \Phi}{z} \Big)^2 g_{\LB}.
\end{split}
\end{align}
For the last statement, the $S^1$-action of fiber rotation
on $X_0$ clearly extends smoothly to the compactification,
since $\hat{X}$ is obtained from $X_0$ by adding 
fixed points over the monopole points, and also adding
the entire boundary of $\H^3$, which is also
fixed by the $S^1$-action. The argument in \cite{LeBrun1991} 
for extending the metric conformally to $\hat{X}$ generalizes to 
show that $\Phi$ yields a {\em{smooth}} conformal 
diffeomorphism of $\hat{X}$, we omit the details. 
\end{proof}
We emphasize that Proposition \ref{liftprop} only provides a lift
of a single isometry.  The lifting of a group of isometries is 
more subtle. We define ${\rm{Aut}}(\H^3;p_1,\dots,p_n) \ $ to be the group of 
isometries of $\H^3$ which preserve the set of monopole points,
and let ${\rm{Aut}}(g_{\LB}; p_1, \dots, p_n) \ $ denote the subgroup of 
conformal automorphisms which are lifts of elements in 
${\rm{Aut}}(\H^3;p_1,\dots,p_n)$. Clearly, we have an exact sequence
\begin{align}
\label{exact}
1 \rightarrow \U(1) \rightarrow {\rm{Aut}}(g_{\LB}; p_1, \dots, p_n )
\overset{\rho}{\rightarrow} {\rm{Aut}}(\H^3;p_1,\dots,p_n),
\end{align}
where $\rho$ is the obvious projection. 
A natural question is whether this sequence {\em{splits}}, 
that is, does there exist a homomorphism 
\begin{align}
\mu:  {\rm{Aut}}(\H^3;p_1,\dots,p_n) \rightarrow  {\rm{Aut}}(g_{\LB}; p_1, \dots, p_n)
\end{align}
such that $\rho \circ \mu = \mbox{id}$? 

In general, this sequence does not split. We next give a 
condition for the sequence to split when restricted to a subgroup of 
$G \subset {\rm{Aut}}(\H^3;p_1,\dots,p_n)$. 
\begin{proposition}
\label{u1lifts}Let the subgroup $G$ consist of 
orientation preserving elements. If the subgroup $G$ has a fixed 
point $p \in \H^3 
\setminus \{p_1, \dots, p_n\}$, then there is a 
splitting homomorphism 
\begin{align}
\label{gamsplit}
\mu:  G \rightarrow  {\rm{Aut}}(g_{\LB}; p_1, \dots, p_n), 
\  \rho \circ \mu = {\rm{id}}_G.
\end{align}
Furthermore,
\begin{align}
\U(1) \times G \subset {\rm{Aut}}(g_{\LB}; p_1, \dots, p_n).
\end{align}
\end{proposition}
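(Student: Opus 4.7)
The strategy is to exploit the common fixed point $p$ to pin down a canonical lift for every element of $G$, thereby killing the $\U(1)$-ambiguity that otherwise obstructs splitting of \eqref{exact}. Since $p\notin\{p_1,\dots,p_n\}$, the fiber $\pi^{-1}(p)\subset X_0$ is a genuine $\U(1)$, so we may fix once and for all a basepoint $\tilde p\in\pi^{-1}(p)$. For each $\phi\in G$, Proposition~\ref{liftprop} produces a $\U(1)$-family of equivariant, connection-preserving lifts $\Phi$; since $\phi(p)=p$, each such lift sends the fiber $\pi^{-1}(p)$ to itself, and by equivariance acts as a right-translation on that fiber. The family of lifts is a torsor over $\U(1)$ acting by right multiplication, so because $\phi$ is orientation preserving (and thus the restriction to the fiber is a translation rather than a reflection, cf.\ Proposition~\ref{fixedfiber}), there is a \emph{unique} lift $\mu(\phi)$ satisfying $\mu(\phi)(\tilde p)=\tilde p$.

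Next I would verify that $\mu$ is a homomorphism. Given $\phi_1,\phi_2\in G$, the composition $\mu(\phi_1)\circ\mu(\phi_2)$ is a lift of $\phi_1\circ\phi_2$ that preserves $\omega$ and fixes $\tilde p$ (since $\mu(\phi_2)(\tilde p)=\tilde p$ and then $\mu(\phi_1)(\tilde p)=\tilde p$). But $\mu(\phi_1\circ\phi_2)$ is characterized among lifts of $\phi_1\circ\phi_2$ by fixing $\tilde p$, and this normalization is unique within the $\U(1)$-family. Hence $\mu(\phi_1)\circ\mu(\phi_2)=\mu(\phi_1\circ\phi_2)$. By Proposition~\ref{hypconf} each $\mu(\phi)$ extends to a conformal automorphism of $(\hat X,[g_{\LB}])$, so $\mu$ lands in ${\rm Aut}(g_{\LB};p_1,\dots,p_n)$, and $\rho\circ\mu={\rm id}_G$ by construction.

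For the second statement, observe that the principal right $\U(1)$-action on $X_0$ descends to a conformal $\U(1)$-action on $\hat X$ (this is explained in the proof of Proposition~\ref{hypconf}). Equivariance of each $\mu(\phi)$ under the right $\U(1)$-action means $\mu(\phi)\circ R_g = R_g\circ\mu(\phi)$ for all $g\in\U(1)$, so the two subgroups $\U(1)$ and $\mu(G)$ commute inside ${\rm Aut}(g_{\LB};p_1,\dots,p_n)$. They intersect trivially since only the identity in $\U(1)$ fixes $\tilde p$. Hence their product is an internal direct product $\U(1)\times G$.

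The only subtle point—and the one worth being careful about—is the uniqueness of the normalized lift. The real content is that a lift of $\phi$ is determined by where it sends a single point in a nonempty fiber: if $\Phi_1,\Phi_2$ are both lifts of $\phi$ with $\Phi_1(\tilde p)=\Phi_2(\tilde p)$, then $\Phi_1\circ\Phi_2^{-1}$ is a lift of the identity fixing $\tilde p$, hence the identity, by the $\U(1)$-torsor description of lifts in Proposition~\ref{liftprop}. This is exactly the ingredient that forces both the homomorphism property and the commutation with fiber rotations, and it is also precisely what fails when $G$ has no fixed point away from $P$, explaining why the sequence need not split in general.
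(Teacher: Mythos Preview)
Your proof is correct and follows essentially the same approach as the paper: normalize each lift by requiring it to fix a chosen point in the fiber over $p$ (equivalently, to act as the identity on that fiber), then use uniqueness of this normalization to verify the homomorphism property. The only difference is in the final step: you obtain the direct product by invoking equivariance of the lifts directly (so $\mu(G)$ and $\U(1)$ commute elementwise), whereas the paper argues that both $\U(1)$ and $\mu(G)$ are normal in the generated subgroup and then appeals to the standard fact that two normal subgroups with trivial intersection commute. Your route is slightly more direct, since the commutation is already built into the definition of an equivariant lift.
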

\begin{proof}
Since $p$ is not one of the monopole points, then 
any element of $G$ has a unique lift which fixes
the fiber over $p$. This defines the
splitting map $\mu$. 
To see that $\mu$ is a homomorphism: given 
$g_1 \in G$ and $g_2 \in G$, we compare
$\mu(g_1 g_2)$ with $\mu(g_1) \mu(g_2)$. 
The former is, by definition, the lift 
of $g_1 g_2$ in the unique lift which fixes the fiber over 
$p$. The latter is also a lift of $g_1 g_2$, and 
fixes the fiber over $p$, since both $\mu(g_1)$
and $\mu(g_2)$ fix this fiber. By uniqueness, 
they are the same. 

Next, $\U(1)$ is the identity component, which is normal.
We claim that $\mu(G)$ is also normal. 
To see this, let $\mu(g) \in \mu(G)$, and $\Phi \in  
{\rm{Aut}}(g_{\LB}; p_1, \dots, p_n)$. Then $\Phi \mu(g) \Phi^{-1}$
fixes $\{p_1, \dots, p_n \}$ and fixes the fiber over $p$, therefore must be the
of the form $\mu(h)$ for some element $h \in G$. 

Finally, since both subgroups are normal, by an elementary theorem 
in group theory, we therefore have a direct product.
\end{proof}
\begin{remark}
\label{finitermk}
{\em
Consider the case when the points are not 
contained on a common geodesic. 
Then ${\rm{Aut}}(\H^3;p_1,\dots,p_n)$ is conjugate 
to a subgroup of ${\rm{O}}(3)$. 
Let us assume for simplicity that 
the symmetry group $G$ is conjugate to a subgroup 
of ${\rm{SO}}(3)$. The group $G$ either 
fixes a geodesic, or has a single fixed 
point. In the former case, there must be a non-monopole 
fixed point, and Proposition  \ref{u1lifts} can be 
applied. In the latter case, if the fixed point is 
not a monopole point, then again Proposition \ref{u1lifts} 
can be applied. But if the fixed point is a monopole
point, then the entire group might
not lift. In this case, it is possible that the group 
$G$ appearing in \eqref{finitecase} is a strictly smaller 
subgroup of  ${\rm{Aut}}(\H^3;p_1,\dots,p_n)$, and which 
might not necessarily lift to a normal subgroup. We do not know 
of any such example for which this happens, however.}
\end{remark}
\begin{proposition}
\label{commprop}
If all of the monopole points lie on a common geodesic,
then 
\begin{align}
\U(1) \times {\rm{SO}}(2) = \U(1) \times  \U(1)  \subseteq {\rm{Aut}}(g_{\LB}).
\end{align}
\end{proposition}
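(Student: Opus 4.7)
The plan is to recognize this as an immediate corollary of Propositions \ref{hyppoints} and \ref{u1lifts} applied to the rotations about the common geodesic.

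First, I would identify the relevant subgroup of hyperbolic isometries. Let $\gamma$ denote the common geodesic containing all the monopole points. The group of orientation-preserving hyperbolic isometries that fix $\gamma$ pointwise is an ${\rm{SO}}(2) = \U(1)$ (the rotations about $\gamma$). Since every monopole point lies on $\gamma$ and is therefore fixed by each such rotation, this ${\rm{SO}}(2)$ is contained in ${\rm{Aut}}(\H^3; p_1, \dots, p_n)$ --- indeed it is precisely the rotation part in the ${\rm{O}}(2)$ described in Proposition \ref{hyppoints}.

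Next, I would verify the hypothesis of Proposition \ref{u1lifts} for this subgroup $G = {\rm{SO}}(2)$. Every element of $G$ consists of an orientation-preserving isometry, and every element of $G$ fixes all of $\gamma$ pointwise. Since $\gamma$ is an entire geodesic but only finitely many points on it are monopole points, we may choose some $p \in \gamma \setminus \{p_1,\dots,p_n\}$. This $p$ is simultaneously a common fixed point of $G$ and a non-monopole point, so the hypothesis of Proposition \ref{u1lifts} holds.

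Finally, applying Proposition \ref{u1lifts} directly produces a splitting homomorphism $\mu : {\rm{SO}}(2) \to {\rm{Aut}}(g_{\LB}; p_1, \dots, p_n)$ and moreover yields the direct product
\begin{align}
\U(1) \times {\rm{SO}}(2) \ \subseteq \ {\rm{Aut}}(g_{\LB}; p_1, \dots, p_n) \ \subseteq \ {\rm{Aut}}(g_{\LB}),
\end{align}
as desired. There is no real obstacle here; the content of the statement lies entirely in the preceding propositions, and this is only an observation that the rotational subgroup meets the fixed-point hypothesis of Proposition \ref{u1lifts}.
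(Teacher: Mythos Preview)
Your proposal is correct and follows essentially the same approach as the paper: choose a non-monopole point on the common geodesic (every point of which is fixed by the ${\rm{SO}}(2)$ of rotations about it) and apply Proposition~\ref{u1lifts}. The paper's proof is just a terser version of what you wrote.
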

\begin{proof}
The subgroup ${\rm{SO}}(2)$ of rotations around a geodesic 
fix the entire geodesic. Let $p$ be any non-monopole
point on the geodesic, and apply Proposition \ref{u1lifts}. 
\end{proof}
In the next section we present a direct method of finding such lifts, 
via an explicit connection form.
\section{An explicit global connection}
\label{explicit}
In this section we give an explicit connection for the 
LeBrun ansatz in the toric case. 
We first consider the case of $2$ monopole points. 
Let the monopole points lie on the $z$-axis, $p_1 = (0,0,c_1)$, and $p_2 = (0,0,c_2)$,
with $c_1 < c_2$. Choose cylindrical coordinates 
\begin{align}
\H^3 = \{ (x,y,z) = ( r \cos \theta_3, r \sin \theta_3, z ) , z > 0\}.
\end{align}
\begin{theorem}
\label{conn}
Let $U = \H^3 \setminus \{z\mbox{-axis}\}$,
and write  
\begin{align}
\H^3 \setminus \{p_1, p_2 \} = U_1 \cup U_2 \cup U_3, 
\end{align}
where 
\begin{align}
U_1 &= U \cup \{ (0,0,z), z < c_1 \} = U \cup I_1,\\
U_2 &= U \cup \{ (0,0,z), c_1 < z < c_2 \} = U \cup I_2,\\
U_3 &= U \cup \{ (0,0,z), z > c_2 \} = U \cup I_3.
\end{align}
Let $f_c : \H^3 \setminus \{p_1, p_2\} \rightarrow \RR$ 
denote the function
\begin{align}
f_c(r,z) = \frac{c^2 - r^2 - z^2}{2 \sqrt{ (c^2 + r^2 + z^2)^2 - 4 c^2 z^2}}.
\end{align}
 Then $f = f_{c_1} + f_{c_2}$ satisfies
\begin{align}
d (f d \theta_3 ) = * d V, 
\end{align}
in $U$.  That is, the form $ i f d \theta_3$
is a local connection form in $U$.
Define
\begin{align}
\omega_1(x) &= \i (-1 + f) d \theta_3, x \in U_1,\\
\omega_2(x) &= \i f d\theta_3,  x \in U_2, \\ 
\omega_3(x) &= \i (1 + f) d \theta_3, x \in U_3. 
\end{align}
These $1$-forms define a global connection (with values
in $\mathfrak{u}(1) = \i \RR$) on 
the total space $X_0 \rightarrow M$. 
That is, there is a global connection $\omega$ on $X_0$,
such that over $U_j$, $\omega$ has the form 
$\omega_j +   \i\cdot d \theta_1$,
where $\theta_1$ is an angular coordinate on the fiber.
\end{theorem}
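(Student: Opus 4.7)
The plan is a direct computation in cylindrical coordinates $(r,\theta_3,z)$, organized in three steps. First, by linearity of $V = 1 + \Gamma_{p_1} + \Gamma_{p_2}$ in the Green's functions, the identity $d(f\,d\theta_3) = *dV$ on $U$ reduces to the single-monopole statement $d(f_c\,d\theta_3) = *d\Gamma_{p_c}$ for each $c \in \{c_1,c_2\}$. In cylindrical coordinates the hyperbolic metric is $z^{-2}(dr^2 + r^2\,d\theta_3^2 + dz^2)$, and a short calculation gives $*dr = (r/z)\,d\theta_3\wedge dz$ and $*dz = (r/z)\,dr\wedge d\theta_3$. Since $\Gamma_{p_c}$ is rotationally symmetric about the $z$-axis, equating coefficients of $dr\wedge d\theta_3$ and $dz\wedge d\theta_3$ reduces the identity to the pair
\begin{align*}
(f_c)_r = (r/z)(\Gamma_{p_c})_z, \qquad (f_c)_z = -(r/z)(\Gamma_{p_c})_r,
\end{align*}
whose integrability is precisely the axisymmetric hyperbolic Laplace equation, automatically satisfied by $\Gamma_{p_c}$ on $\H^3\setminus\{p_c\}$. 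Thus only one equation need be verified directly, using the standard closed form for the rotationally symmetric Green's function together with the key factorization $(c^2+r^2+z^2)^2 - 4c^2 z^2 = (r^2+(z-c)^2)(r^2+(z+c)^2)$; this is a routine differentiation.

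Second, I check that the three local forms $\w_j$ patch together to a global connection on a principal $\U(1)$-bundle over $M_0 = \H^3\setminus\{p_1,p_2\}$. The sets $U_1,U_2,U_3$ cover $M_0$, and because $I_1,I_2,I_3$ are pairwise disjoint, each overlap $U_j\cap U_k$ equals $U$, on which $\theta_3$ is single-valued. From the definitions,
\begin{align*}
\w_2 - \w_1 = \i\,d\theta_3 = g\inv\,dg \quad\text{for}\ g = e^{\i\theta_3}\colon U \to \U(1),
\end{align*}
and similarly $\w_3-\w_2 = g\inv dg$, with $\w_3-\w_1 = 2\i\,d\theta_3$; the cocycle $g_{12}=g_{23}=e^{\i\theta_3}$, $g_{13}=e^{2\i\theta_3}$ is consistent. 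These equalities are precisely the transformation law for local connection $1$-forms under a change of trivialization by the transition functions $g_{jk}$, so the $\w_j$ together with the vertical term $\i\,d\theta_1$ define a smooth connection $\w$ on the principal $\U(1)$-bundle determined by the $g_{jk}$. By Chern--Weil, $\W_{\w} = \i\cdot *dV$, which identifies this bundle with $X_0$ by the construction of Section~\ref{Lansatz}.

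The main obstacle is showing that each $\w_j$ extends smoothly across the interval $I_j\subset U_j$ of the $z$-axis, where $d\theta_3$ is singular. Using the radicand factorization above, one reads off the asymptotic expansion
\begin{align*}
f_c(r,z) = \tfrac{1}{2}\,\mathrm{sgn}(c-z) + O(r^2) \quad\text{as } r\to 0,\ z \neq c,
\end{align*}
with remainder smooth in $(r^2,z)$. Summing the two contributions: on $I_1$ (where $z<c_1<c_2$) the coefficient $-1+f$ of $d\theta_3$ in $\w_1$ is $O(r^2)$; on $I_2$ (where $c_1<z<c_2$), $f=O(r^2)$; on $I_3$ (where $z>c_2$), $1+f=O(r^2)$. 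Writing $d\theta_3 = r^{-2}(-y\,dx + x\,dy)$ in Cartesian coordinates, any $C^\infty$ function of $(r^2,z)$ vanishing at $r=0$ multiplied by $d\theta_3$ extends to a smooth $1$-form across the axis. Hence each $\w_j$ is smooth on all of $U_j$, and $\w$ is smooth on all of $X_0$, completing the verification.
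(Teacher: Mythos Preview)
Your proof is correct and follows essentially the same approach as the paper: compute $*dV$ in cylindrical coordinates, reduce to the pair of first-order equations $(f_c)_r=(r/z)(\Gamma_{p_c})_z$ and $(f_c)_z=-(r/z)(\Gamma_{p_c})_r$, and then check smoothness of the $\omega_j$ across the axis by examining the boundary values of $f$. The paper arrives at the explicit $f_c$ via a Poincar\'e-lemma integral and then says ``a computation, which we omit'' shows it equals the stated formula, whereas you take the given $f_c$ and verify the PDE directly; your use of the factorization $(c^2+r^2+z^2)^2-4c^2z^2=(r^2+(z-c)^2)(r^2+(z+c)^2)$ and the expansion $f_c=\tfrac12\,\mathrm{sgn}(c-z)+O(r^2)$ makes the smoothness argument more transparent than the paper's bare statement that $\partial_r f(0,z)=0$.

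One small remark: saying ``only one equation need be verified directly'' because integrability is the Laplace equation is slightly elliptical. Verifying, say, $(f_c)_r=(r/z)(\Gamma_{p_c})_z$ together with $\Delta_{\H^3}\Gamma_{p_c}=0$ only shows that $(f_c)_z+(r/z)(\Gamma_{p_c})_r$ is independent of $r$; you still need one evaluation (e.g.\ at $r=0$, where both sides vanish since $f_c(0,z)$ is locally constant) to conclude it is zero. This is trivial to supply and does not affect correctness. Your transition-function discussion also folds in what the paper separates out as the subsequent proposition.
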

\begin{proof}
Recall we want the connection to have curvature form $\Omega_{\omega} = * d V$,
where $V = 1 + \Gamma_{p_1} + \Gamma_{p_2}$.
The Green's function is given by
\begin{align}
\Gamma_{(0,0,c)} (x,y,z) 
& =  - \frac{1}{2} + \frac{1}{2}
\left[
1 - \frac{ 4 c^2 z^2}{\left( r^2 + z^2 + c^2 \right)^2}
\right]^{-1/2},
\end{align}
where $r^2 = x^2 + y^2$, see \cite[Section 2]{LeBrun1991b}.
An important point is that $\Gamma$ only depends upon $z$ and $r$. 
A computation shows that in cylindrical coordinates
\begin{align}
* d V = \left( \frac{r}{z}\left( - V_r dz + V_z dr  \right) \right) \wedge d \theta_3,
\end{align}
since $d * d V=0$, this implies that 
\begin{align}
d \left( \frac{r}{z}\left( -V_r dz + V_z dr \right) \right) = 0. 
\end{align}
The first quadrant $Q_1 = \{ (r, z), r > 0, z > 0 \}$ is contractible, 
so there exists a function $f = f(r,z)$ such that
\begin{align}
d f =  \frac{r}{z}\left( -V_r dz + V_z dr \right).
\end{align}
We let 
\begin{align}
\kappa =  \frac{r}{z} V_z dr -  \frac{r}{z}   V_r dz = \kappa_1 dr + \kappa_2 dz.
\end{align}
An explicit potential $f$ satisfying $df = \kappa$ is  
\begin{align}
f = \left( \int_0^1 \kappa_1( tr, tz) dt \right)r
+ \left( \int_0^1 \kappa_2( tr, tz) dt \right)z.
\end{align}
A computation, which we omit, shows that 
\begin{align}
f = f_{c_1} + f_{c_2},
\end{align}
is a solution where $f_c$ is given by
\begin{align}
f_c(r,z) = \frac{c^2 - r^2 - z^2}{2 \sqrt{ (c^2 + r^2 + z^2)^2 - 4 c^2 z^2}},
\end{align} 
and any other solution differs from 
this by a constant, since $U$ is connected.
An important remark is that 
\begin{align}
  (c^2 + r^2 + z^2)^2 - 4 c^2 z^2 \geq 0, 
\end{align}
and if $ (c^2 + r^2 + z^2)^2 - 4 c^2 z^2 = 0$, then 
$(z-c)^2 + r^2 = 0$, so $f_c$ is well-defined 
on all of $\H^3 \setminus \{(0,0,c)\}$.  
We then have on $U$,
\begin{align}
d ( f d \theta_3) = d f \wedge d \theta_3 = * d V. 
\end{align}
Along the $z$-axis, we have the following
\begin{align}
f(0,z) =
\begin{cases}
1  &   z < c_1,\\
0  &  c_1 < z < c_2,\\
-1 &   z > c_2. \\
\end{cases}
\end{align}
Furthermore, $\frac{\partial}{\partial r} f (0,z) = 0$. 
Consequently, $(f-1) d \theta_3$ is a smooth $1$-form in $U_1$,
$\omega_2 = f d \theta_3$ is a smooth $1$-form in $U_2$,
and $\omega_3 = (1 + f) d \theta_3$ is a smooth $1$-form in $U_3$. 
\end{proof}
\begin{remark}{\em
For $n > 2$, simply take $f = f_{c_1} + \dots + f_{c_n}$,
and $U_j$ to be the union of $U$ with the corresponding 
interval on the $z$-axis, with the connection form in 
each chart adding the appropriate constant multiple of $d \theta_3$. 
We remark that an explicit potential in the case $n=2$ was
written down in \cite{GibbonsWarnick} in pseudospherical 
coordinates, but only in a single chart; our method
above yields a {\em global} connection form. }
\end{remark}
We can use the above to write down explicit transition 
functions for the bundle $X_0 \rightarrow M$. 
\begin{proposition}
\label{trans}
With respect to the covering $\{ U_1, U_2, U_3 \}$, the transition 
functions of the bundle are given by $g_{21} = e^{ \i \theta_3}$, 
and $g_{23} = e^{- \i \theta_3}$. 
\end{proposition}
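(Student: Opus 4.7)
The plan is to read off the transition functions directly from the requirement that the local 1-forms $\omega_j + i\, d\theta_1$ on $U_j$ patch together into a single global connection $\omega$ on $X_0$.

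First I would note that because the intervals $I_1, I_2, I_3$ on the $z$-axis are pairwise disjoint, each overlap $U_j \cap U_k$ (with $j \neq k$) is precisely $U = \H^3 \setminus \{z\text{-axis}\}$, on which $\theta_3$ is defined modulo $2\pi$. This means the candidate transition functions $e^{\pm i\theta_3}$ are genuinely well-defined $\U(1)$-valued functions on the overlaps.

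Next, I would set up the standard dictionary between transition functions and local fiber coordinates. If $\theta_1^{(j)}$ denotes the fiber angle in the trivialization over $U_j$, and $g_{jk} = e^{i\alpha_{jk}}$ is the transition function, then the two fiber coordinates are related by a shift $\theta_1^{(j)} = \theta_1^{(k)} \pm \alpha_{jk}$, and hence $d\theta_1^{(j)} - d\theta_1^{(k)} = \pm\, d\alpha_{jk}$, with the sign determined by the chosen convention. Demanding that the global form $\omega_j + i\, d\theta_1^{(j)} = \omega_k + i\, d\theta_1^{(k)}$ on the overlap then gives the basic identity
\begin{equation}
\omega_j - \omega_k = \mp\, i\, d\alpha_{jk}.
\end{equation}

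Finally, I would just substitute the explicit formulas from Theorem \ref{conn}:
\begin{align*}
\omega_2 - \omega_1 &= i f\, d\theta_3 - i(-1+f)\, d\theta_3 = i\, d\theta_3,\\
\omega_2 - \omega_3 &= i f\, d\theta_3 - i(1+f)\, d\theta_3 = -i\, d\theta_3.
\end{align*}
With the convention that makes the signs agree with the statement, this forces $\alpha_{21} = \theta_3$ and $\alpha_{23} = -\theta_3$ up to additive constants, which may be absorbed by choosing the local sections of each trivialization so that the constants vanish. Hence $g_{21} = e^{i\theta_3}$ and $g_{23} = e^{-i\theta_3}$ on $U$, as claimed.

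There is no real obstacle here beyond bookkeeping: once the sign convention for transition functions is fixed, the computation is one line. The only minor point worth verifying is that the remaining cocycle $g_{13} = g_{12} g_{23} = e^{-2i\theta_3}$ is consistent with $c_1(X_0)$ having the expected value on the $2$-spheres encircling the $z$-axis in $M$, which is an immediate check using $c_1(X_0) = [*dV]/2\pi$.
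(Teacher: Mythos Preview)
Your proposal is correct and follows essentially the same approach as the paper: compute the differences $\omega_2-\omega_1$ and $\omega_2-\omega_3$ explicitly and read off the transition functions from the standard relation (which the paper writes as $\omega_j-\omega_k=g_{jk}^{-1}\,dg_{jk}$). Your version includes more explicit scaffolding about fiber coordinates and a cocycle sanity check, but the substance is identical.
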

\begin{proof}
From above
\begin{align}
\omega_2 - \omega_1 = \i f d \theta_3 - \i (-1 + f)  d\theta_3 
= \i\cdot d \theta_3. 
\end{align}
The formula for the change of connection is given by 
\begin{align}
\omega_2 - \omega_1 = g_{21}^{-1} d g_{21},
\end{align}
which implies that $g_{21} = e^{\i \theta_3}$. 
Also,
\begin{align}
\omega_2 - \omega_3 = \i (f d \theta_3) - \i (1 + f) d \theta_3 
= - \i\cdot d \theta_3 = g_{23}^{-1} d g_{23},
\end{align}
which implies that $g_{23} =  e^{- i\theta_3}$.
\end{proof}
We name two points on the boundary of
$\H^3$: $q_1 = (0,0,0)$, and $q_2 = (0,0,\infty)$. 
We denote the union of the fibers over
$\overline{I_j}$ by $\Sigma_j$ $(1\le j\le 3)$, which is 
a $2$-sphere. We also let $\Sigma_4$ denote
the $2$-sphere corresponding to the boundary 
of hyperbolic space. 
Using the above, we next show that the $S^1$-action on $\H^3$ 
given by rotation around the $z$-axis
has infinitely many lifts to conformal $S^1$-actions on $(\hat{X}, [g_{\LB}])$. 
We recall that a {\em{semi-free}} action is a non-trivial action 
of a group $G$ on a connected space $M$ such that for every $x \in M$, the 
corresponding isotropy subgroup is either all of $G$ or is trivial. 
\begin{proposition}
Consider the $S^1$-action on $\H^3$
by oriented rotations around the $z$-axis.
Then for any integer $k$, there exists a lift to a
conformal $S^1$-action on $(\hat{X}, [g_{\LB}])$ 
such that $e^{\i \theta}$ lifts with following 
property: the lifted action rotates the fibers over $I_2$ by $e^{\i k \theta}$,
it rotates the fibers over $I_1$ by $e^{\i (k-1) \theta}$,
and rotates the fibers over $I_3$ by $e^{\i (k+1) \theta}$.

Consequently, for $k = 0$, the lift fixes only 
$\Sigma_2 \cup \{q_1,q_2\}$, and this action is the
only only lift to a semi-free $S^1$-action. 
For $k = 1$, the fixed points are 
$\Sigma_1 \cup \{p_2, q_2\}$,
and for $k = -1$, the lift fixes only $\Sigma_3 \cup \{p_1, q_1\}$.
For any other $k$, the fixed set consists of  
four points $\{p_1, p_2, q_1, q_2\}$.
\end{proposition}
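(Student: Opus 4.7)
The plan is to build the lift chart by chart using the explicit connection of Theorem \ref{conn}, check cross-chart compatibility via the transitions of Proposition \ref{trans}, extend smoothly to the compactification $\hat{X}$, and then read off the fixed-point data directly.

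Since the base rotation $\phi_\theta$ preserves $r$, $z$, and the monopole points while acting only on $\theta_3$, the function $f = f_{c_1} + f_{c_2}$, and hence each local connection form $\omega_j$ of Theorem \ref{conn}, is $\phi_\theta$-invariant. Applying Remark \ref{explrm} in chart $U_j$, the equation $\phi_\theta^\ast \omega_j - \omega_j = i \cdot dh_j^{(\theta)}$ admits $h_j^{(\theta)} \equiv \mathrm{const}$, and the requirement that $\theta \mapsto \Phi_\theta$ be a continuous group homomorphism forces this constant to be $k_j \theta$ for some $k_j \in \ZZ$. I therefore define, in $U_j \times \U(1)$,
\begin{align}
\Phi_\theta^{(k)}(x, v) := \bigl(\phi_\theta(x),\, v \cdot e^{i k_j \theta}\bigr),
\end{align}
with $k_1 = k - 1$, $k_2 = k$, $k_3 = k + 1$. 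A direct cocycle computation using $g_{21} = e^{i\theta_3}$ and $g_{23} = e^{-i\theta_3}$ shows that global well-definedness on the overlaps reduces to precisely the consecutive-integer differences $k_1 - k_2 = \pm 1$ and $k_3 - k_2 = \mp 1$ (signs fixed by the orientation convention for $\phi_\theta$), which hold by construction. Because $\omega$ is preserved and $\phi_\theta$ is a hyperbolic isometry fixing the monopole points, Proposition \ref{hypconf} shows each $\Phi_\theta^{(k)}$ is an orientation-preserving conformal automorphism of $[g_{\LB}]$.

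To extend to $\hat{X}$, I observe that $\phi_\theta$ extends to the conformal closure of $\H^3$, fixing $q_1$ and $q_2$ and inducing a standard rotation on the added boundary sphere $\Sigma_4$. At each monopole-filled point $\tilde p_j$ and each $q_j$, the LeBrun construction supplies local coordinates in which the fiber rotation becomes a linear $\U(1)$-action with weights $(1,1)$ up to sign; combining this with the linearization of $\phi_\theta$ around the corresponding fixed base point yields a linear $S^1$-action whose weights are determined by the adjacent $k_j$'s, confirming smoothness of the extension to all of $\hat{X}$.

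Finally, the fixed-point and semi-free analysis is a direct case check. Over $\H^3 \setminus \{z\text{-axis}\}$ the base action is free, so no fixed points arise there. Over an interior point of $I_j$ the fiber is rotated by $e^{i k_j \theta}$, which is pointwise fixed by the full $S^1$ iff $k_j = 0$. Because $(k_1, k_2, k_3) = (k-1, k, k+1)$ contains $0$ precisely when $k \in \{-1, 0, 1\}$, the extra fixed $2$-spheres $\Sigma_1$ (for $k=1$), $\Sigma_2$ (for $k=0$), and $\Sigma_3$ (for $k=-1$) appear exactly in those three cases; the points $\tilde p_1, \tilde p_2, q_1, q_2$ are fixed for every $k$. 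For semi-freeness, the isotropy at a generic fiber point over $I_j$ is $\ZZ_{|k_j|}$, which is either trivial or the full $S^1$ iff $|k_j| \in \{0, 1\}$; demanding this for the three consecutive integers $k-1, k, k+1$ forces $k = 0$. I expect the only genuine technical hurdle to be the smooth extension across the added loci $\tilde p_j$, $q_j$, and $\Sigma_4$: once suitable local coordinates are selected at each, the chart-wise constant rotations glue into a linear $S^1$-action there, after which everything is routine.
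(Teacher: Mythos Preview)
Your proof is correct and follows essentially the same approach as the paper: use the rotational invariance of the local connection forms $\omega_j$ from Theorem \ref{conn} to see that lifts act as constant fiber rotations in each chart, determine the integer shifts $k_j$ via the transition functions of Proposition \ref{trans}, and then read off the fixed-point and semi-freeness data directly. The paper's version is terser (it defers the smooth-extension argument to \cite{LeBrun1991} rather than sketching the local weight computation at $\tilde p_j, q_j$), but the logic is the same.
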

\begin{proof}
Let $\phi$ denote an oriented rotation about the $z$-axis,
determined by $e^{\i \theta_0}$.  
As in the above section, we know a lift of $\phi$,
call it $\Phi$, exists, and is unique up to a right multiplication by a 
constant. If we choose the lift $\Phi$ so that $\Phi$ fixes 
fiber over a point on $I_2$, then $\Phi$ fixes 
all fibers over $I_2$. This follows 
because the connection form on $U_2$ chosen in 
Theorem \ref{conn} is invariant under rotations 
around the $z$-axis, see Remark \ref{explrm}. 
From the transition functions given in 
Proposition \ref{trans}, $\Phi$ rotates the
fibers over $I_1$ by $e^{-\i \theta_0}$, and the 
fibers over $I_3$ are rotated by $e^{+\i \theta_0}$.
Finally, it is clear that we can lift to an $S^1$-action by specifying 
the action on the fibers over $I_2$; there is a 
lift for any integer $k$ so that the fibers of 
$I_2$ are rotated by $e^{\i k \theta}$. 
The semi-free claim is obvious, since for $k=0$, the lift 
only makes a single rotation on $\Sigma_1$ and $\Sigma_3$,
while for $k \neq 0$, $I_1$ and $I_3$ are rotated 
multiple times.  
Again, the argument in \cite{LeBrun1991} extends to show all of the 
above actions yield smooth actions on the compactification $\hat{X}$, 
we omit the details. 
\end{proof}
We denote the lifted action for $k =0$ by $K_3$. 
Since the $K_3$-action clearly commutes with the 
$K_1$-action, this then gives an identification of the identity 
component of the automorphism group with $K_1 \times K_3$, 
where $K_1$ is the group of rotations in the fiber. 
It will be shown below in Lemma \ref{lemma:sf}, that for $n=2$, 
$K_1$ and $K_3$ are the only semi-free $S^1$-actions. 
We will also see in Section \ref{geometric} that the 
$K_3$-action yields another fibration of an open subset of $X$ 
over $\H^3 \setminus \{ \text{two points} \}$. 

While for simplicity of presentation we restricted the above discussion 
to the case of $2$ monopole points,  it is clear that for the case of 
$n$ monopole points all lying on a common geodesic, 
the $SO(2)$-action of rotations in $\H^3$ around the geodesic 
will have a lift to an $S^1$-action for any integer 
$k$. Since these actions commute with the fiber rotation,
there is a torus action as identity component. 
However, in contrast to $n =2$, for $n \geq 3$, none of these 
lifted $S^1$-actions are semi-free, see Lemma \ref{lemma:sf}. 
\subsection{Relation to Joyce metrics}
In \cite{Joyce1995},  Dominic Joyce gave a 
very general construction for toric self-dual metrics. 
In that paper, on page 547, Joyce remarks that toric LeBrun metrics
are in fact Joyce metrics. However, a direct argument is not 
provided. In this short section (which is really just 
an extended remark) we suggest a direct argument, 
using the explicit connection form from the previous section. 
For space considerations, we do not provide full details.
Again, for simplicity we consider only that case $n=2$.  
With this choice of connection, the LeBrun metric
takes the form
\begin{align}
g_{\LB} &= V \frac{1}{z^2} ( dr^2 + r^2 d \theta_3^2 + dz^2) - V^{-1} \omega_2^2,
\end{align}
which we can write as
\begin{align}
V^{-1} g_{\LB} &= \frac{1}{z^2} ( dr^2 + dz^2) + \frac{r^2}{z^2} d \theta_3^2
+ V^{-2} ( f d \theta_3 + d \theta_1)^2\\
& = g_{\H^2} + \frac{r^2}{z^2} d \theta_3^2
+ V^{-2} ( f d \theta_3 + d \theta_1)^2.
\end{align}

Recall that the coordinates $(r,z)$ lie in the first quadrant $Q_1$. 
Let $(x_1, x_2)$ be coordinates in upper half space $\H^2$. 
Under the conformal change of coordinates
\begin{align}
x_1 &= r^2 - z^2,\\
x_2 &= 2 r z,
\end{align}
we have
\begin{align}
dx_1^2 + dx_2^2 = 4(r^2 + z^2) ( dr^2 + dz^2) = 4 \sqrt{ x_1^2 + x_2^2} ( dr^2 + dz^2).
\end{align}
Solving for $r$ and $z$,
\begin{align}
r^2 &= \frac{1}{2} \left( x_1 + \sqrt{ x_1^2 + x_2^2} \right),\\
z^2 &=  \frac{1}{2} \left( -x_1 + \sqrt{ x_1^2 + x_2^2} \right).
\end{align}
Consequently, in these coordinates, $g_{\LB}$ is conformal to 
\begin{align}
g_{\H^2} + \frac{2(x_1^2 + x_2^2)}{x_2^2}
\left[  \left( 1  + \frac{x_1}{ \sqrt{x_1^2 + x_2^2}} \right) d \theta_3^2
+  \left( 1  - \frac{x_1}{ \sqrt{x_1^2 + x_2^2}} \right)  
 V^{-2} ( f d \theta_3 + d \theta_1)^2 \right].
\end{align}
This looks very close to Joyce's ansatz for toric self-dual metrics
over hyperbolic $2$-space $\H^2$ (with $x_1$ and $x_2$ switched). 
However, the weights of the torus action here are not exactly the 
same as chosen in Joyce's paper.
Identifying the torus action using $K_1 \times K_3$,  
the stabilizer subgroups are given as follows:  
\begin{align}
\Sigma_1: (1, -1),  \Sigma_2: (1,0),  \Sigma_3: (1,1), \Sigma_4: (0,1).
\end{align}
However,  Joyce's weights for the boundary spheres are
given by (notation as in \cite{Joyce1995})
\begin{align}
(m_1, n_1) = (0,1), (m_2, n_2) = (1,2), (m_3, n_3) = (1,1), (m_4, n_4) = (1,0), 
\end{align}
The change of angular coordinates $\theta_1 \mapsto \theta_3$, 
$\theta_3 \mapsto \theta_1 + \theta_3$ (which corresponds to an 
outer automorphism of ${\rm{SL}}(2, \ZZ)$), then brings the
boundary weights into agreement. After this change, it is 
then possible to show directly this is indeed a Joyce metric, but we omit the 
computation. A similar argument also works for $n \geq 3$ monopole points. 

\subsection{Extra involution for $\mathbf{n=2}$}
\label{extra}
Recall we have the boundary sphere $\Sigma_4$ 
fixed by $K_1$, and the sphere $\Sigma_2$, fixed by $K_3$.
We next find a conformal transformation which interchanges these
spheres, and also has the property that $p_1$ maps to $q_1 = (0,0,0)$, and 
$p_2$ maps to $q_2 = (0,0,\infty)$. 
This map will interchange the orbits of the $K_1$ and $K_3$ actions. 

Let $r,z$ and $c_1,c_2$ have the same meaning as in the beginning of Section \ref{explicit}.
We first define an automorphism $\varphi : Q_1 \rightarrow Q_1$,
where $Q_1 = \{(r,z)\set r > 0, z > 0 \}$ is the first 
quadrant. 
\begin{definition}{\em 
Let $w = r + \i z$, and define}
\begin{align}
\label{map}
\varphi(w) = \i c_2 \sqrt{ \frac{ \bar{w}^2 + c_1^2}{ \bar{w}^2 + c_2^2}}
= \varphi_1(r,z) + \i \varphi_2(r,z).
\end{align}
\end{definition}
\begin{proposition}
The map $\varphi$
interchange $I_2$ and $I_4$, interchanges $p_1$ and $q_1 = (0,0,0)$, 
and interchanges $p_2$ and $q_2 = (0,0,\infty)$ (thinking of the above 
as contained in $\overline{Q}_1$).  
Under the identification of $Q_1$ with the upper half plane 
under the complex square $w \mapsto w^2$, the map is a hyperbolic 
isometry. 
\end{proposition}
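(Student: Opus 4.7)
The plan is to verify the three assertions by direct computation, working in the coordinate $w = r + \i z$ on $Q_1$ and using the conformal identification $\tau : Q_1 \to \H^2$, $\tau(w) = w^2$, of the first quadrant with the upper half plane.

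First I would check the four boundary-point correspondences by substitution in \eqref{map}. The numerator of the radicand vanishes at $w = \i c_1$ and the denominator vanishes at $w = \i c_2$, giving $\varphi(\i c_1) = 0 = q_1$ and $\varphi(\i c_2) = \infty = q_2$; conversely $\varphi(0) = \i c_2(c_1/c_2) = \i c_1 = p_1$ and $\lim_{w\to\infty}\varphi(w) = \i c_2 = p_2$. Thus $\varphi$ acts as the asserted involution on $\{p_1, p_2, q_1, q_2\}$. To interchange $I_2$ and $I_4$, I would parameterize $I_2$ by $w = \i z$ with $c_1 < z < c_2$: the ratio $(\bar w^2 + c_1^2)/(\bar w^2 + c_2^2) = (c_1^2 - z^2)/(c_2^2 - z^2)$ is then negative real, its square root is imaginary, and the branch forced by $\varphi(Q_1)\subset Q_1$ yields $\varphi(\i z) = c_2\sqrt{(z^2-c_1^2)/(c_2^2-z^2)}$, a positive real number tracing out all of $I_4$ as $z$ varies over $(c_1,c_2)$. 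The symmetric computation with $w = r \in (0,\infty)$ shows $\varphi$ sends $I_4$ back to $I_2$.

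The heart of the proposition is the hyperbolic isometry claim, which I would reduce to a calculation on $\H^2$. Squaring \eqref{map} gives the purely rational identity
\begin{align*}
\varphi(w)^2 \;=\; -c_2^2 \cdot \frac{\bar w^2 + c_1^2}{\bar w^2 + c_2^2}.
\end{align*}
Setting $W = \tau(w) = w^2$, so that $\bar W = \bar w^2$, the induced map $\Psi(W) := \varphi(w)^2$ on $\H^2$ is the single-valued anti-M\"obius transformation
\begin{align*}
\Psi(W) \;=\; \frac{-c_2^2\, \bar W - c_1^2 c_2^2}{\bar W + c_2^2},
\end{align*}
whose real coefficients $(a,b,c,d) = (-c_2^2,\,-c_1^2 c_2^2,\,1,\,c_2^2)$ satisfy $ad - bc = c_2^2(c_1^2 - c_2^2) < 0$. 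Consequently $\Psi$ preserves the upper half plane and is an orientation-reversing hyperbolic isometry of $\H^2$, and pulling back through the conformal equivalence $\tau$ identifies $\varphi$ with a hyperbolic isometry of $Q_1$. As a sanity check, a short computation also gives $\Psi^2 = \mathrm{id}$, consistent with the involution picture.

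The only delicate point, which I would address at the outset, is the branch choice of the square root in \eqref{map}: since $Q_1$ is simply connected and the rational function under the radical has neither zero nor pole on $Q_1 \setminus \{p_1, p_2\}$, a single-valued branch exists, and the four boundary-point computations above pick out uniquely the branch for which $\varphi:Q_1\to Q_1$ is the involution of the proposition. This branch bookkeeping is the only obstacle, and once it is settled the rest is mechanical.
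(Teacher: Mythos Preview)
Your proof is correct and follows essentially the same route as the paper: both arguments conjugate $\varphi$ through the squaring map $w \mapsto w^2$ and observe that the resulting map on $\H^2$ is the anti-M\"obius transformation $\zeta \mapsto -c_2^2(\bar\zeta + c_1^2)/(\bar\zeta + c_2^2)$, hence an orientation-reversing hyperbolic isometry. You supply somewhat more detail than the paper on the boundary correspondences, the explicit $I_2 \leftrightarrow I_4$ check, and the branch selection, but the underlying idea is identical.
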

\begin{proof}
We identify $Q_1$ with $\H^2$ using the complex square,
\begin{align}
\zeta = x_1 + \i x_2 = ( r + \i z)^2= s(w). 
\end{align}
Under this map, the monopole points $p_j$ map to 
$(- c_j^2, 0)$. 
Consider the M\"obius transformation defined by
\begin{align}
\label{mobius}
L(\zeta) = - (c_2)^2 \frac{ \bar{ \zeta} + (c_1)^2}{ \bar{ \zeta} + (c_2)^2},
\end{align}
which is an orientation-reversing hyperbolic 
isometry of $\H^2$. It has the property that 
\begin{align}
\label{Lpoints}
L((  - c_1^2, 0)) = (0,0), \ L(0,0) = ( - c_1^2, 0), \mbox{ and } 
L(- c_2^2, 0)) = (\infty, 0). 
\end{align}
Clearly, $\varphi(w) = s^{-1} \circ  L \circ  s (w)$, which is \eqref{map}. 
The first statement follows easily. 
\end{proof}
\begin{remark}
\label{Lunique}
{\em
The map $L$ is the unique orientation-reversing 
hyperbolic involution satisfying \eqref{Lpoints}. }
\end{remark}
The original coordinates on $U_2 \times S^1$ are ordered 
$(r, \theta_3, z, \theta_1)$, but in the following we will 
rearrange coordinates so that this domain is $Q_1 \times S^1 \times S^1$. 
\begin{definition}{\em
For any angle $\vartheta$, define the map 
$\tilde{\Lambda}(\vartheta): X \rightarrow X$ by}
\begin{align}
\tilde{\Lambda}(\vartheta): \big( (r,z), \theta_3, \theta_1 \big) \mapsto 
\big( \varphi(r,z), \theta_1 - \vartheta, \theta_3 + \vartheta \big).
\end{align}
\end{definition}
On first observation, it might appear that the map 
$\tilde{\Lambda}(\vartheta)$ is not well-defined at points on 
the $z$-axis corresponding the the intervals $I_1$ and 
$I_3$, where the coordinate $\theta_3$ is 
not defined. However, the map is in fact well-defined
everywhere: 
\begin{proposition}
For any angle $\vartheta$, the map $\tilde{\Lambda}(\vartheta)$ extends to a diffeomorphic involution 
of $\hat{X} = 2 \# \CP^2$.
The extension interchanges $\Sigma_2$ and $\Sigma_4$, and 
interchanges the points $p_j$ and $q_j$ for $j = 1,2$.
\end{proposition}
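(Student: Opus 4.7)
The plan is to exploit the $K_1 \times K_3$-equivariance of $\tilde\Lambda(\vartheta)$ to extend the map stratum by stratum, reducing each smoothness check to a low-dimensional slice computation. First, on the dense open set where the formula is defined, two key identities are verified directly: (i) $\tilde\Lambda(\vartheta)^2 = {\rm id}$, which reduces to $\varphi^2 = {\rm id}$ (a consequence of $L^2 = {\rm id}$, as noted in Remark \ref{Lunique}) together with the cancellation of $\pm\vartheta$ in the angular components; and (ii) the intertwining $\tilde\Lambda(\vartheta)\circ K_1^s = K_3^s\circ \tilde\Lambda(\vartheta)$ (and its companion with the roles of $K_1$, $K_3$ swapped), which is a one-line substitution. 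This intertwining property will drive the rest of the extension.

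I then extend across the various singular strata in turn. Across $\pi\inv(I_1)$ and $\pi\inv(I_3)$, where the base coordinate $\theta_3$ fails but the total space is smooth, I switch to the $U_1$ and $U_3$ trivializations using the transition functions $g_{21} = e^{\i\theta_3}$ and $g_{23} = e^{-\i\theta_3}$ from Proposition \ref{trans}. A short substitution shows that in these charts the $U_1$- or $U_3$-fiber coordinate is preserved by $\tilde\Lambda(\vartheta)$, and the remaining output base coordinate is smooth in $(x, y, z, \theta_1^{(1)})$ because $\varphi_1$ vanishes to first order in $r$ along $I_1$ (and along $I_3$), so the apparent singularity $\varphi_1/r$ extends smoothly. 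To extend across the interior of $\Sigma_2$ into the interior of $\Sigma_4$, I use smooth transverse coordinates $(x,y) = (r\cos\theta_3, r\sin\theta_3)$ on the input side and $(u', v') = (z'\cos\theta_1', z'\sin\theta_1')$ on the output side, and verify directly that $\tilde\Lambda(\vartheta)$ takes the form $u' = (\varphi_2/r)(x\cos\vartheta - y\sin\vartheta)$, etc., with $\varphi_2/r$ extending smoothly because $\varphi_2$ vanishes to first order in $r$ along $I_2$.

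The main obstacle is the extension at the four isolated points $\tilde p_j \leftrightarrow q_j$, each of which lies at the intersection of two of the spheres $\Sigma_k$ in $\hat X$. Here I choose $(K_1 \times K_3)$-equivariant local smooth charts modeled on $\CC^2$ at each of these four points: near $\tilde p_j$ the torus acts with $K_1$ rotating one $\CC$-factor and $K_3$ trivial (since $\tilde p_j$ is fixed by $K_3$), while near $q_j$ the roles of $K_1$ and $K_3$ are reversed. The intertwining property then forces $\tilde\Lambda(\vartheta)$ to realize the linear swap of the two $\CC$-factors between the two local models, which is manifestly a smooth diffeomorphism. The involution identity on all of $\hat X$ and the interchanges $\Sigma_2 \leftrightarrow \Sigma_4$ and $p_j \leftrightarrow q_j$ then follow by continuity from the dense open set, using the boundary behavior of $\varphi$ established before the statement.
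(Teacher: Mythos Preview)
Your approach is essentially the same as the paper's: both verify smoothness chart by chart using the transition functions of Proposition~\ref{trans}. The paper is terser---it reduces to $\vartheta=0$ via $\tilde\Lambda(\vartheta)=(e^{-i\vartheta},e^{i\vartheta})\cdot\tilde\Lambda(0)$, works out the $U_1$ chart explicitly (showing $(\theta_3,\theta_1)\mapsto(\theta_1-\theta_3,\theta_1)$ there and checking the formula is well-defined at $r=0$ using \eqref{phiz}), and then simply asserts that a similar computation handles the remaining strata. Your stratum-by-stratum organization and explicit use of the $K_1\leftrightarrow K_3$ intertwining identity make the structure of the argument clearer.

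There is, however, a genuine error in your treatment of the four corner points. You write that near $\tilde p_j$ ``the torus acts with $K_1$ rotating one $\CC$-factor and $K_3$ trivial (since $\tilde p_j$ is fixed by $K_3$)''. This is false: $K_3$ fixes the point $\tilde p_j$ (it lies on $\Sigma_2$, which is fixed pointwise by $K_3$), but $K_3$ acts nontrivially on the normal direction to $\Sigma_2$ at that point. In an equivariant $\CC^2$ chart at $\tilde p_1\in\Sigma_1\cap\Sigma_2$, both circle factors act nontrivially, with the weights on the two $\CC$-factors given by the stabilizer data of $\Sigma_1$ and $\Sigma_2$. More importantly, the claim that ``the intertwining property then forces $\tilde\Lambda(\vartheta)$ to realize the linear swap'' is not justified: equivariance constrains the form of an extension but does not by itself prove the extension exists or is smooth---there could in principle be equivariant maps with nontrivial radial behavior. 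What actually closes the argument is either (i) the explicit coordinate computation as in the paper, or (ii) the observation that you have already extended $\tilde\Lambda(\vartheta)$ smoothly to the complement of four points and the map is bounded near each, so the singularities are removable (e.g.\ by writing the map in the $\CC^2$ chart and invoking Hartogs-type or elementary Taylor arguments). Either fix is short, but as written the final step is a gap.
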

\begin{proof}We need only consider the case that 
$\vartheta = 0$, since $\tilde{\Lambda}(\vartheta) = (e^{-i \vartheta}, e^{i \vartheta}) 
\cdot \tilde{\Lambda}(0)$ (viewing this as the $K_1 \times K_3$-action). 
We note that initially $\tilde{\Lambda}(0)$ is defined with respect 
to a trivialization of the bundle on the open set $U_2$. 
To confirm that it well-defined everywhere, we must use the transition 
functions from Proposition \ref{trans}. 
For example, in $U_2$, the angles change by 
$(\theta_3, \theta_1) \mapsto (\theta_1, \theta_3)$.
Taking into account the transition 
function $g_{21} = e^{\i \theta_3}$, in $U_1$
the action is $(\theta_3, \theta_1) \mapsto (\theta_1 - \theta_3, \theta_1)$.
In the $U_1$ chart, the map $\tilde{\Lambda}(0)$ therefore takes the form 
\begin{align}
( r, z, \theta_3, \theta_1) \mapsto ( \varphi_1 (r,z), \varphi_2 (r, z), \theta_1 - \theta_3,
\theta_1). 
\end{align}
Rewriting the map in the coordinates $(x,y,z,\theta_1)$, 
\begin{align}
\label{phit}
(x,y,z,\theta_1) \mapsto (  \varphi_1 (r,z) \sin( \theta_1 - \theta_3), 
 \varphi_1 (r,z) \cos( \theta_1 - \theta_3), 
 \varphi_2 (r,z), \theta_1).
\end{align} 
For points with $r=0$, the map $\varphi$ is given by 
\begin{align}
\label{phiz}
\varphi(0,z) = \left( 0, c_2 \sqrt{  \frac{ c_1^2 - z^2}{c_2^2 - z^2}} \ \right),
\end{align}
which {\em is} well-defined on $I_1$. 
Therefore, for $(x,y) = (0,0)$, \eqref{phit} becomes
\begin{align}
(0,0, z,  \theta_1) \mapsto (0,0, \varphi(0,z), \theta_1 ),
\end{align}
which is indeed well-defined. A similar argument confirms that 
$\tilde{\Lambda}(0)$ is well-defined (and smooth) everywhere 
on $2 \# \CP^2$. 

It is easy to see that $\tilde{\Lambda}(0)$ interchanges 
$\Sigma_2$ and $\Sigma_4$, and interchanges the points $p_j$ and $q_j$ for 
$j = 1,2$. Finally, it is clear that $\tilde{\Lambda}(\vartheta)$ is an 
involution.
\end{proof}
\begin{theorem}
For any angle $\vartheta$, the map $\tilde{\Lambda}(\vartheta)$ is a conformal 
involution of $[g_{\LB}]$. 
\end{theorem}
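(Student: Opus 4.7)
My first step is to reduce to the single case $\vartheta = 0$. From the relation $\tilde{\Lambda}(\vartheta) = (e^{-\i\vartheta}, e^{\i\vartheta})\cdot\tilde{\Lambda}(0)$ observed in the preceding proof, and since $K_1\times K_3$ acts by isometries of $g_{\LB}$ (being the lifts from Proposition~\ref{hypconf} of the isometric $SO(2)$-rotation about the $z$-axis, together with fiber rotations), if $\tilde{\Lambda}(0)^* g_{\LB}$ is conformal to $g_{\LB}$ then so is every $\tilde{\Lambda}(\vartheta)^* g_{\LB}$. Since the preceding proposition shows $\tilde{\Lambda}(0)$ extends to a smooth diffeomorphism of $\hat X$, and conformality is a closed condition, I only need to verify the conformal property on the dense open subset $X_0$, in particular on the $U_2$-chart, where
\begin{align*}
g_{\LB} = V\,(dr^2 + r^2\,d\theta_3^2 + dz^2) + z^2\,V^{-1}(f\,d\theta_3 + d\theta_1)^2.
\end{align*}
On this chart $\tilde{\Lambda}(0)^* g_{\LB}$ has the same shape with $(r,z,\theta_3,\theta_1)$ replaced by $(R,Z,\theta_1,\theta_3)$, where $(R,Z) = \varphi(r,z)$, and the goal is to show this equals $\lambda(r,z)\cdot g_{\LB}$ for some positive function $\lambda$.

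My second step is to exploit the Joyce presentation indicated in the preceding subsection. Under the squaring map $s(w) = w^2$ identifying $Q_1$ with $\H^2$, the metric $V^{-1}g_{\LB}$ becomes, up to a further conformal factor, a Joyce toric self-dual metric on $\H^2$ whose boundary data consist of the two points $(-c_1^2,0), (-c_2^2,0)\in\partial\H^2$ with equal weights; these two points correspond, respectively, to the fixed-spheres $\Sigma_4$ (fixed by $K_1$) and $\Sigma_2$ (fixed by $K_3$). By the preceding proposition, $s\circ\varphi\circ s^{-1} = L$ is the orientation-reversing hyperbolic involution of $\H^2$ that interchanges these two boundary points, while the swap $\theta_3\leftrightarrow\theta_1$ is precisely the $SL(2,\ZZ)$-transformation exchanging the two generators of the $T^2$-action. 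Joyce's ansatz is manifestly symmetric under the simultaneous swap of (i) the two boundary points and (ii) the corresponding pair of weight directions in the torus, so this combined transformation preserves the Joyce metric, and hence preserves $g_{\LB}$ up to a conformal factor.

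The principal obstacle I anticipate is the boundary bookkeeping: one must verify that the LeBrun data on $U_2$ really assembles into the Joyce ansatz with the claimed symmetric pair structure, with the weight directions $(\partial_{\theta_1}, \partial_{\theta_3})$ correctly paired with the two boundary points, and that the constant shifts between the charts $U_1, U_2, U_3$ coming from Proposition~\ref{trans} are compatible. Equivalently, and perhaps more self-containedly, one can verify the conformal identity directly in the $U_2$-chart: the key inputs are (i) the identity $\varphi^*(dR^2 + dZ^2) = \mu\cdot (dr^2 + dz^2)$ for an explicit positive function $\mu$, which follows from $s\circ\varphi = L\circ s$ and $L\in\mathrm{Isom}(\H^2)$, and (ii) explicit M\"obius transformation laws relating $V(\varphi(r,z))$ and $f(\varphi(r,z))$ to $V(r,z), f(r,z)$, and $\mu$, obtained by substituting the definition of $\varphi$ into the explicit formulas for $\Gamma_{(0,0,c)}$ and $f_c$ and simplifying via the M\"obius identity for $L$ in the $\zeta = w^2$ coordinate. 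In either approach, once the conformal identity is verified on $U_2\times S^1$, density and continuity complete the proof on all of $\hat X = 2\#\CP^2$.
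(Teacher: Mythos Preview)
Your approach is fundamentally different from the paper's. The paper explicitly declines any direct metric computation (``It would be a formidable calculation to show directly that this map is indeed conformal'') and instead argues via twistor theory: in Section~\ref{2CP2} it exhibits an explicit matrix $\Lambda\in{\rm PGL}(6,\CC)$ acting on Poon's projective model $\tilde Z\subset\CP^5$, verifies that $\Lambda$ lifts to the small resolution $Z$ (the actual twistor space), and then in Theorem~\ref{final} identifies the conformal involution thereby induced on $2\#\CP^2$ with $\tilde\Lambda(\pi/2)$ by (a) showing it descends to a hyperbolic isometry of $\H^2$ via Fujiki's theorem, (b) computing the angular action on the $K_1\times K_3$ coordinates, and (c) invoking the uniqueness of $L$ noted in Remark~\ref{Lunique}. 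Conformality is thus obtained for free from the twistor correspondence; only the identification with the explicitly defined map $\tilde\Lambda(\vartheta)$ requires work.

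Your outline has a genuine gap at the crucial step. The assertion that ``Joyce's ansatz is manifestly symmetric under the simultaneous swap of the two boundary points and the corresponding pair of weight directions'' is the whole content of the theorem, not a triviality. First, for $n=2$ the Joyce data involve $n+2=4$ boundary points on $\partial\H^2$ (namely $-c_2^2,\,-c_1^2,\,0,\,\infty$ under the squaring map $s$), not two; your identification of $(-c_1^2,0)$ and $(-c_2^2,0)$ with the spheres $\Sigma_4$ and $\Sigma_2$ is incorrect --- these are the monopole points $p_1,p_2$, while the spheres correspond to the \emph{intervals} between consecutive boundary points, each carrying its own stabilizer weight. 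Second, even after repairing this bookkeeping, Joyce's metric is built from specific solutions of a hyperbolic-harmonic equation with prescribed boundary singularities, and checking that the orientation-reversing isometry $L$ together with the torus swap $(\theta_1,\theta_3)\mapsto(\theta_3,\theta_1)$ permutes these solutions in exactly the right way (including the constant shifts dictated by Proposition~\ref{trans}) is precisely the ``formidable calculation'' the paper avoids. You acknowledge this obstacle but do not resolve it; without either carrying out the explicit Joyce computation or establishing the claimed transformation laws for $V\circ\varphi$ and $f\circ\varphi$ directly, the argument remains a plausible strategy rather than a proof.
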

\begin{proof}
It would be a formidable calculation to show directly 
that this map is indeed conformal. 
In this paper, for space considerations, we therefore prefer to 
argue indirectly using twistor theory, see Theorem \ref{final} below. 
\end{proof}

\subsection{Summary}
\label{summary}

In this section, we summarize what we have obtained 
so far, and we also make some remarks about the fixed point sets of 
various lifts. 

\begin{theorem}
\label{summarytheorem}
Consider $(n \# \CP^2, [g_{\LB}])$ and $n \geq 2$. 
If the monopole points do not lie on any common geodesic (so that $n\ge 3$), 
then 
\begin{align}
\label{i0}
\U(1) \ltimes G \subseteq {\rm{Aut}}(g_{\LB}), 
\end{align}
where $G$ is a finite subgroup of ${\rm{O}}(3)$.

Next, assume that the monopole 
points all lie on a common geodesic. Let ${\rm{Aut}}_0$ 
denote the identity component of ${\rm{Aut}}(g_{\LB})$. Then we have
\begin{align}
\label{i1}
 \U(1) \times  \U(1) = {\rm{Aut}}_0(g_{\LB}).
\end{align}

Let $\phi_3$ be any reflection 
about a hemisphere on which all the monopole points belong.
Then there exists a lift $\Phi_3$ of $\phi_3$ which 
is also an involution. Let $\ZZ_2 = \{ Id, \Phi_3 \}$ denote the subgroup 
generated by $\Phi_3$. Then the semi-direct product 
\begin{align}
\label{i2}
 (\U(1) \times  \U(1)) \ltimes \ZZ_2 \subseteq {\rm{Aut}}(g_{\LB}).
\end{align}

In the case there is an additional reflection symmetry $\phi_2$ 
(which is always the case for $n=2$), consider also 
the composition $\phi_1 = \phi_2 \circ \phi_3$. 
Then, in addition to $\Phi_3$, there exist lifts 
$\Phi_j$ of $\phi_j$, for $j = 1,2$ such that 
$\{ Id, \Phi_1, \Phi_2, \Phi_3 \}$ is a subgroup 
of ${\rm{Aut}}$ which is isomorphic to $\ZZ_2 \oplus \ZZ_2$,
and 
\begin{align}
\label{i3}
 (\U(1) \times  \U(1)) \ltimes ( \ZZ_2 \oplus \ZZ_2)  \subseteq {\rm{Aut}}(g_{\LB}).
\end{align}

For $n = 2$ consider also the extra involution $\tilde{\Lambda}(0)$.
Then 
\begin{align*}
\{ Id,  \Phi_1, \Phi_2, \Phi_3, \tilde{\Lambda}(0),  
\tilde{\Lambda}(0) \Phi_1, \tilde{\Lambda}(0) \Phi_2, 
\tilde{\Lambda}(0) \Phi_3 \}
\end{align*}
is a subgroup of ${\rm{Aut}}$ isomorphic to ${\rm{D}}_4$, the dihedral group with 
$8$ elements,  and 
\begin{align}
\label{i4}
 (\U(1) \times  \U(1)) \ltimes {\rm{D}}_4 \subseteq {\rm{Aut}}(g_{\LB}).
\end{align}
\end{theorem}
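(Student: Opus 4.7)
The plan is to assemble the containments \eqref{i0}--\eqref{i4} from the results already established in Sections~\ref{Lansatz} and~\ref{explicit}. The inclusion \eqref{i0} follows by combining Proposition~\ref{hyppoints} (which identifies ${\rm{Aut}}(\H^3;p_1,\dots,p_n)$ with a finite subgroup $G$ of ${\rm{O}}(3)$) with Proposition~\ref{hypconf} (which produces a conformal lift of each such hyperbolic isometry), using that the fiber rotation $\U(1)$ sits as the normal identity component in the exact sequence \eqref{exact}. The equality \eqref{i1} is precisely Proposition~\ref{commprop}.

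For \eqref{i2}, since $\phi_3$ is orientation-reversing, Proposition~\ref{liftprop} provides a lift $\Phi_3$ realized by a reflection in each fiber, so $\Phi_3^2$ covers the identity and acts trivially on the fiber, hence is automatically an involution. Since $\U(1)\times\U(1)$ is the normal identity component, $\langle\Phi_3\rangle\cong\ZZ_2$ acts on it by conjugation, giving the semi-direct product structure.

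For \eqref{i3}, the subtlety is a coherent choice: I want involutive lifts $\Phi_1,\Phi_2,\Phi_3$ whose set actually closes up, i.e.\ satisfying $\Phi_1=\Phi_2\Phi_3$ on the nose, not merely up to a constant in $\U(1)\times\U(1)$. To rigidify the ambiguity of Proposition~\ref{liftprop}, I would locate a common non-monopole fixed point $p_0$ of the Klein four group $\{{\rm id},\phi_1,\phi_2,\phi_3\}$---concretely, the midpoint of the geodesic containing the monopoles, which lies on the intersection of the two fixed hemispheres of $\phi_2$ and $\phi_3$ and on the rotation axis of $\phi_1$---and then normalize each $\Phi_j$ by demanding that it fix a preselected point in the fiber over $p_0$. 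Because $\Phi_2\Phi_3$ also fixes that point and lifts $\phi_1$, the uniqueness clause of Proposition~\ref{liftprop} forces $\Phi_2\Phi_3=\Phi_1$, and the Klein four structure follows. The semi-direct product with $\U(1)\times\U(1)$ is then immediate since the identity component is normal.

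For \eqref{i4}, I combine the three involutions above with $\tilde{\Lambda}(0)$ from Subsection~\ref{extra}, and verify that the eight listed elements form a subgroup of order $8$ isomorphic to ${\rm D}_4$. The critical input is that $\tilde{\Lambda}(0)$ swaps the $K_1$- and $K_3$-actions (as visible from the angular formula $(\theta_3,\theta_1)\mapsto(\theta_1,\theta_3)$), so conjugation by $\tilde{\Lambda}(0)$ induces the coordinate-swap outer automorphism of $\U(1)\times\U(1)$; meanwhile the $\Phi_j$ act on the torus by sign-change involutions coming from the reflections of $\phi_j$. Combining the swap with a sign-change produces an automorphism of order $4$ on $\U(1)\times\U(1)$, which forces a corresponding order-$4$ element among the products $\tilde{\Lambda}(0)\Phi_j$. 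Thus the eight-element extension of the Klein four group by $\tilde{\Lambda}(0)$ is non-abelian and contains three distinct involutions $\Phi_1,\Phi_2,\Phi_3$; the only such non-abelian group of order $8$ is ${\rm D}_4$ (the quaternion group $Q_8$ having a unique element of order $2$). The main obstacle in the whole plan is the coherent normalization of lifts in \eqref{i3}--\eqref{i4}: the common-fixed-point trick handles \eqref{i3}, while the closure verification for the eight listed elements in \eqref{i4} is reduced to a bookkeeping check on angles using the explicit $\tilde{\Lambda}(0)$ of Subsection~\ref{extra} together with the transition functions of Proposition~\ref{trans}.
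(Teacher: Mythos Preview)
Your overall strategy is sound and broadly parallel to the paper's, but there are two genuine gaps and one methodological difference worth noting.

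First, for \eqref{i1} you assert that it ``is precisely Proposition~\ref{commprop}.'' That proposition only yields the \emph{inclusion} $\U(1)\times\U(1)\subseteq{\rm Aut}(g_{\LB})$; the equality with the identity component needs an upper bound on $\dim{\rm Aut}_0$. The paper supplies this by invoking Poon's result \cite{Poon1994} that the identity component of the conformal group of a self-dual structure on $n\#\CP^2$ (for $n\ge2$) has dimension at most $2$. Without this, you have not shown ${\rm Aut}_0$ is exactly the torus.

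Second, your normalization scheme for \eqref{i3} takes $p_0$ to be ``the midpoint of the geodesic containing the monopoles.'' When $n$ is odd and the monopoles are symmetric about the midpoint, that midpoint is itself a monopole point, so there is no fiber over it in $X_0$ and your rigidification collapses. The fix is easy: the common fixed locus of $\phi_1,\phi_2,\phi_3$ is the entire geodesic axis of $\phi_1$ (perpendicular to the monopole geodesic), so you may take $p_0$ to be any non-monopole point on it. Alternatively, the paper avoids this issue entirely by a cleaner asymmetric construction: it first produces the involution $\Phi_3$, then uses Proposition~\ref{u1lifts} (applied only to the orientation-preserving $\phi_1$, which fixes a whole geodesic) to obtain $\Phi_1=\mu(\phi_1)$, and then simply \emph{defines} $\Phi_2:=\Phi_1\Phi_3$. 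This sidesteps any simultaneous normalization.

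For \eqref{i2}, your argument that any lift of the orientation-reversing $\phi_3$ is automatically an involution is correct but stated too tersely: the lift acts as a fiber reflection only over \emph{fixed points} of $\phi_3$; the conclusion $\Phi_3^2={\rm id}$ then follows because $\Phi_3^2$ covers ${\rm id}_{\H^3}$ and hence equals $R(g)$ for a constant $g\in\U(1)$, and evaluating at any fixed-point fiber forces $g=1$. The paper instead argues more cautiously, writing $\tilde\Phi_3^2=R(g)$ and replacing $\tilde\Phi_3$ by $\tilde\Phi_3\circ R(\sqrt{g^{-1}})$; both approaches work.

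For \eqref{i4}, both you and the paper must defer the conformality of $\tilde\Lambda(0)$ to the twistor analysis (Theorem~\ref{final}); your identification of the quotient as ${\rm D}_4$ via ``non-abelian of order $8$ containing $\ZZ_2\oplus\ZZ_2$, hence not $Q_8$'' is exactly the argument the paper gives in Proposition~\ref{D4prop}.
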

\begin{proof}
The inclusion \eqref{i0} was discussed above in Remark \ref{finitermk}. 
The equality \eqref{i1} follows from Proposition \ref{commprop},
and the fact that the identity component is a manifold, 
and cannot be strictly greater than dimension $2$ in 
this case \cite{Poon1994}. 

For \eqref{i2}, we let $\tilde{\Phi}_3$ be {\em{any}} lifting 
of $\phi_3$ from Proposition \ref{hypconf}. Note that 
$\tilde{\Phi}_3^2$ is orientation preserving and covers the 
identity map of $\H^3$. Therefore, by the uniqueness in 
Proposition \ref{hypconf}, we must have that $\tilde{\Phi}_3^2 = R(g)$
is right multiplication by $g \in \U(1)$. 
To find an involution, we then define $\Phi_3$ to be 
$\tilde{\Phi}_3 \circ R(\sqrt{g^{-1}})$. This is an 
involution since any lift is equivariant. Therefore $\{Id, \Phi_3\}$ is 
indeed a subgroup of ${\rm{Aut}}(g_{\LB})$ isomorphic to $\ZZ_2$. 
Since the identity component is necessarily normal, the group 
generated by the identity component and this $\ZZ_2$-subgroup 
is a semi-direct product. 

For \eqref{i3}, we let $\Phi_3$ be as in the previous paragraph. 
Next, the map $\phi_1 = \phi_2 \circ \phi_3$ is an orientation 
preserving hyperbolic 
isometry which fixes a geodesic. Thus we may apply 
Proposition \ref{gamsplit}, and let $\Phi_1 = \mu (\phi_1)$. 
Since $\phi_1$ is an involution, from the definition of $\mu$, 
it follows that $\Phi_1$ is also an involution. 
Then we {\em{define}} $\Phi_2 = \Phi_1 \circ \Phi_3$,
which is necessarily a lift of $\phi_2$.  
Clearly, $\{ Id, \Phi_1, \Phi_2, \Phi_3 \}$ is a 
subgroup isomorphic to  $\ZZ_2 \oplus \ZZ_2$,
and for the same reason as in the previous paragraph, 
the generated subgroup is the semi-direct product. 

Finally, the inclusion \eqref{i4} will be proved in Section 
\ref{geometric}, see Proposition \ref{D4prop}. 
\end{proof}
It is the purpose of Sections \ref{twistor} and \ref{2CP2} 
below to show that the inclusions \eqref{i2}-\eqref{i4} 
are in fact equalities. We end this section with a short discussion 
on fixed point 
sets of involutions, and the action on cohomology.
\begin{theorem} 
\label{fixedpoint}
For $(n \# \CP^2, [g_{\LB}])$ and $n \geq 2$, assume that the monopole 
points all lie on a common geodesic. 
If $\phi_3$ is a reflection 
about a hemisphere containing all the monopole points, 
then the lift $\Phi_3$ of $\phi_3$ given in Theorem \ref{summarytheorem}
has fixed point locus $\Upsilon_3 = n \# \RP^2$, which is contained in an 
invariant $ n \# \RP^3$. Furthermore,  $\Phi_3$ induces the 
identity map on cohomology. 

In the case there is an additional reflection symmetry $\phi_2$ 
(which is always the case for $n=2$), consider also 
the composition $\phi_1 = \phi_2 \circ \phi_3$. 
Let $\Upsilon_j$ denote the fixed locus of $\Phi_j$,
where $\Phi_j$ are the lifts of $\phi_j$ given in 
Theorem \ref{summarytheorem}.
For $n$ even, $\Upsilon_1$ and $\Upsilon_2$ are both two-dimensional 
spheres, and $\Upsilon_1 \cap \Upsilon_2 = S^1$. 
For $n$ odd,  $\Upsilon_1=  S^2$ and 
$\Upsilon_2 = \RP^2$, with $\Upsilon_1 \cap \Upsilon_2 = S^1$. 
The maps $\phi_1$ and $\phi_2$ induce the following map
on cohomology
\begin{align}
\big( k_1, k_2, \dots, k_{n} \big) \mapsto 
\big( k_n, k_{n-1}, \dots, k_1 \big),
\end{align}
with respect to an orthonormal basis of $H^2(n\#\mathbb{CP}^2;\mathbb Z)$.
Further, for $n$ even, $\Phi_2$, leaves invariant 
an $S^3$. For $n$ odd,  $\Phi_2$ leaves invariant an $\RP^3$. 

For $n = 2$, the fixed point set of the extra involution 
$\tilde{\Lambda}(0)$ is an $S^2$,  which is contained in an 
invariant $\RP^3$. Also, $\tilde{\Lambda}(0)$ induces the following 
map on cohomology:
\begin{align}
(k_1, k_2) \mapsto (-k_2, k_1),
\end{align}
with respect to an orthonormal basis of $H^2(2\#\mathbb{CP}^2;\mathbb Z)$.
\end{theorem}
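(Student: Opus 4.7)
My plan is to determine each fixed locus by combining local complex-analytic models near monopole points with the explicit bundle structure of Theorem~\ref{conn}, and then to read off the cohomology actions from the induced actions on natural $2$-cycles. For $\Phi_3$ (the lift of the reflection $\phi_3$ in the hyperbolic $2$-plane $\Pi_3\supset\{p_1,\dots,p_n\}$), Proposition~\ref{fixedfiber} gives two $\Phi_3$-fixed points in each $S^1$-fiber over $\Pi_3\setminus P$, while near each $\tilde p_j$ an antiholomorphic involution on the local $\CC^2$ model has a real $2$-plane as fixed set, smoothly joining the two sheets at a branch point; toward $\Sigma_4$ both sheets collapse onto the boundary reflection's fixed circle $\gamma_4$. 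This identifies $\Upsilon_3$ with the double cover of $\overline{\Pi_3}$ branched over the $n$ monopole points, with boundary identified along $\gamma_4$; an Euler-characteristic count gives $\chi(\Upsilon_3)=2(1-n)+n=2-n$, and nonorientability follows from the $\pi$-holonomy of the LeBrun connection around each monopole (which swaps the two sheets), giving $\Upsilon_3\cong n\#\RP^2$. The invariant $n\#\RP^3$ is the $3$-submanifold $\pi^{-1}(\overline{\Pi_3})$, a circle bundle over $\overline{\Pi_3}$ with fibers collapsed at the $n$ monopoles and along $\partial\Pi_3$; an inductive gluing over the $n$ necks identifies its diffeomorphism type. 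The identity action on cohomology follows by checking that $\Phi_3$ preserves each natural generator of $H^2(\hat X)$ class-wise, using that $\phi_3$ fixes every monopole point individually.

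For $\Phi_1$ and $\Phi_2$, write $\phi_1=\phi_2\circ\phi_3$ as a $\pi$-rotation about the perpendicular geodesic $\gamma_1=\Pi_2\cap\Pi_3$ through the midpoint of the monopole configuration. By Proposition~\ref{u1lifts} the lift $\Phi_1=\mu(\phi_1)$ fixes the entire $S^1$-fiber over the arc of $\gamma_1$ through the fiducial point defining $\mu$; the preimage of its closure, with endpoints collapsed at $\Sigma_4$ and (for odd $n$) at the middle monopole, is $\Upsilon_1\cong S^2$. Applying the $\Phi_3$-analysis with $\Pi_3$ replaced by $\Pi_2$ --- which contains $n_2\in\{0,1\}$ monopole points according to the parity of $n$ --- gives $\Upsilon_2\cong n_2\#\RP^2$, that is $S^2$ or $\RP^2$ respectively. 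The intersection $\Upsilon_1\cap\Upsilon_2$ is the union of the two $\Phi_2$-fixed arcs in the fibers over $\gamma_1$, closing up through the fiber collapses into a single $S^1$. The invariant $S^3$ (or $\RP^3$) for $\Phi_2$ is $\pi^{-1}(\overline{\Pi_2})$, and the reversal cohomology action $(k_1,\dots,k_n)\mapsto(k_n,\dots,k_1)$ follows from the monopole permutation $p_j\leftrightarrow p_{n+1-j}$ realized by $\phi_1$ and $\phi_2$, expressed in the basis of $H^2(\hat X)$ dual to the $n$ spheres between consecutive monopole points.

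For the extra involution $\tilde\Lambda(0)$ when $n=2$, I would work directly in the charts of Section~\ref{extra}: in $U_2$ the map reads $((r,z),\theta_3,\theta_1)\mapsto(\varphi(r,z),\theta_1,\theta_3)$, so its fixed set pulls back to $\{(r,z)\in\mathrm{Fix}(\varphi),\,\theta_3=\theta_1\}$; since $\varphi$, viewed via $s(w)=w^2$ as a hyperbolic involution of $\H^2$, has a single geodesic as fixed set, this diagonal together with the two fiber collapses closes up into an $S^2$. The invariant $\RP^3$ is the $3$-submanifold $\{\theta_3\equiv\theta_1\pmod\pi\}$ extended across charts via Proposition~\ref{trans}. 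The cohomology action $(k_1,k_2)\mapsto(-k_2,k_1)$ reflects the fact that $\tilde\Lambda(0)$ interchanges the semi-free $S^1$-actions $K_1$ and $K_3$, hence the spheres $\Sigma_2$ and $\Sigma_4$; in the basis $\{[\Sigma_1],[\Sigma_3]\}$ of $H^2(2\#\CP^2)$ this yields the $90^\circ$-rotation matrix, with the sign coming from the orientation reversal induced by the swap. The main obstacle will be pinning down the precise diffeomorphism type of the invariant $3$-manifolds beyond their homology --- in particular the inductive gluing showing each monopole neck contributes exactly one $\RP^3$ summand --- and correctly tracking orientation signs in the cohomology bases so that the stated matrix forms (especially the $90^\circ$ rotation for $\tilde\Lambda(0)$) emerge.
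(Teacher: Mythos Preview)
Your approach is essentially the same as the paper's: both identify $\Upsilon_3$ as a branched double cover of the closed hemisphere (computing $\chi=2-n$ and arguing non-orientability), describe $\Upsilon_1$ as the completed circle bundle over the fixed geodesic $\gamma_1$, obtain $\Upsilon_2$ by the same branched-cover analysis over $\Pi_2$ with $0$ or $1$ branch points according to parity, and for $\tilde\Lambda(0)$ locate the fixed $S^2$ as the diagonal circle over the fixed semicircle of $\varphi$. Your holonomy argument for non-orientability is in fact cleaner than the paper's, which is somewhat informal on this point.

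One concrete discrepancy worth flagging: your candidate for the invariant $\RP^3$ of $\tilde\Lambda(0)$, namely the set $\{\theta_3\equiv\theta_1\pmod\pi\}$, is not the one the paper uses and is unlikely to close up to an $\RP^3$. Over the interior of $Q_1$ this locus is two disjoint copies of $Q_1\times S^1$, and tracking it through the transition functions of Proposition~\ref{trans} does not obviously yield a compact $3$-manifold of the right type. The paper instead takes the union of the full torus fibers over the fixed semicircle of $\varphi$; since $\varphi$ fixes this arc pointwise, that set is manifestly $\tilde\Lambda(0)$-invariant, contains the fixed $S^2$, and is identified as $\RP^3$ by recognizing it as the restriction of the circle bundle to a $2$-sphere in $\H^3\cup\partial\H^3$ passing through both monopole points. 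You should replace your diagonal description with this one; the rest of your plan is sound and matches the paper's argument, including the cohomology computations (which the paper also treats at a comparable level of detail).
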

\begin{proof}
We let $\phi_3$ be a reflection in a hemisphere containing 
the monopole points. Since $\phi_3$ is orientation reversing, 
by Proposition \ref{fixedfiber}, the lift $\Phi_3$ will fix exactly 
2 points in each fiber over this hemisphere. Let $\Upsilon_3$ denote the 
fixed locus.  Topologically, $\Upsilon_3$ is a double covering 
of a $2$-disc branched over the boundary circle and over $n$ points. 
We compute
\begin{align}
\chi ( \Upsilon_3) = 2 \chi(D^2) - \chi(S^1) - n = 2 - n.
\end{align}
It turns out that $\Upsilon_3$ is non-orientable, so by the surface classification, 
$\Upsilon_3 = n \# \RP^2$ (to see non-orientability, we note that 
odd dimensions is clear since the Euler characteristic is odd, 
and the even-dimensional case case be viewed as a limiting case of the next 
higher odd dimension). The invariant set is a circle 
bundle over this hemisphere, branched over 2 points
and the boundary circle, so is $n \# \RP^3$. 

When the points are in symmetric configuration, 
we let $\phi_2$ denote the extra symmetry of inversion 
in a hemisphere. 
If $n$ is even, there is no monopole point on this hemisphere. 
Since $\phi_2$ is orientation reversing, Proposition 
\ref{fixedfiber} implies that the fixed point set of the lift 
$\Phi_2$ is a double cover of $D^2$ branched only over the boundary circle,
so $\Upsilon_2 = S^2$. The invariant set is a circle 
bundle over the disc branched over the boundary, 
so is an $S^3$. 
Next, define $\phi_1 = \phi_2 \circ \phi_3$. 
The fixed point set of $\phi_1$ is a geodesic $\gamma$.
From the proof of Theorem \ref{summarytheorem},
our choice of the lifting $\Phi_1$ fixes a fiber over 
a point of $\gamma$, thus fixes every fiber over $\gamma$. 
Therefore, $\Upsilon_1$ is a circle 
bundle over $\gamma$, completed by adding two points
on the boundary of $\H^3$, so $\Upsilon_1 = S^2$. 
The intersection of $\Upsilon_1$ and $\Upsilon_2$ gives 
$2$ points in each fiber over $\gamma$. Adding the 2
boundary points gives that $\Upsilon_1 \cap \Upsilon_2 = S^1$. 

If $n$ is odd, then there is a monopole point on this
hemisphere. From Proposition \ref{fixedfiber}, the fixed point set 
of the lift $\Phi_2$ is a double cover of $D^2$ branched over the 
boundary circle, and a single point.  We have
\begin{align}
\chi( \Upsilon_2) = 2 \chi(D^2) - \chi(S^1) - 1 = 1, 
\end{align}
which implies that $\Upsilon_2$ is $\RP^2$. The invariant 
set is a circle bundle over $D^2$ branched over the 
boundary circle, and a single point, thus is an $\RP^3$. 
 
As in the even case, define 
$\phi_1 = \phi_2 \circ \phi_3$. Again,  the fixed point set of $\phi_1$ is a geodesic
$\gamma$. Therefore, $\Upsilon_1$ is contained in the restriction 
of the bundle to this geodesic (including the 2 boundary points 
of the geodesic). Since there is a single monopole 
point on this geodesic, the restriction of the bundle is topologically 
the wedge $S^2 \vee S^2$. From the proof of Theorem \ref{summarytheorem}, 
the lift $\Phi_1$ 
was chosen to fix a fiber over some point on this geodesic.  
Since the fixed point set must be a smooth $2$-dimensional manifold, 
$\Upsilon_1$ must be one of these $S^2$-s, depending upon the particular 
choice of the lift $\Phi_1$. 
The intersection of $\Upsilon_1$ and $\Upsilon_2$ then is 2 points 
in each fiber over one half of $\gamma$, together with the monopole 
point and a single boundary point, which implies that
$\Upsilon_1 \cap \Upsilon_2 = S^1$. 

In the case $n=2$, recall the hyperbolic isometry $L$ 
defined in \eqref{mobius}. It is easy to verify that the fixed 
point set of $L$ is given by
\begin{align}
(x_1 - c_2^2)^2 + x_2^2 = c_2^2 ( c_2^2 - c_1^2),
\end{align}
and is therefore a semicircle centered at $(-c_2^2, 0)$
of radius $c_2 \sqrt{ c_2^2 - c_1^2}$. 
Since $z \mapsto z^2$ is a conformal transformation, the
fixed point set of $\varphi$ is a semi-circle centered on the 
$z$-axis at $(0, c_2)$, intersecting the positive $z$-axis at two points,
one of them on the interval $I_1$, and the other on $I_3$.  
The fixed point set of $(\theta_3, \theta_1) \mapsto (\theta_1, \theta_3)$ 
is obviously points of the form $(\theta_3, \theta_3)$. Thus the 
fixed point set of $\tilde{\Lambda}(0)$ is a circle 
bundle over the semicircle branched over the two 
endpoints, therefore is an $S^2$. The invariant 
set consists of all the torus fibers over the 
semicircle, which is easily seen to be an $\RP^3$
(it is the $S^1$ bundle restricted to a sphere containing
both monopole points). 

These involutions can be visualized as follows. 
In the case $n = 2$, it is well-known that $\CP^2 \# \CP^2$ can 
also be viewed as a boundary connect sum of $2$ 
Eguchi-Hanson ALE space (glued along the boundary $\RP^3$-s). The involution $\Phi_1$ 
reverses the two factors of the usual connect sum,
and has an invariant $S^3$ (it flips $\Sigma_1$ and $\Sigma_3$), 
while the involution $\tilde{\Lambda}(0)$ 
interchanges the Eguchi-Hanson spaces, and has an 
invariant $\RP^3$ (it flips $\Sigma_2$ and $\Sigma_4$).  
For $n$ even, then involution $\Phi_1$ reflects the 
connect sum through the central neck of the connect 
sum, and has an invariant $S^3$. For $n$ odd,  
then involution $\Phi_1$ reflects the 
connect sum through a central $\CP^2$ summand, 
and has an invariant $\RP^3$.
The action on cohomology follows easily from these descriptions. 
\end{proof}

\begin{remark}{\em 
In the case of a single monopole point, the LeBrun conformal 
class compactifies to the conformal class of the Fubini-Study 
metric on $\CP^2$, which is Einstein.  By Obata's 
Theorem, any conformal automorphism is an isometry, thus
the conformal automorphism group for $n=1$ is ${\rm{SU}}(3)$. 
For $n = 0$, the LeBrun conformal 
class compactifies to the conformal class of the 
round metric on $S^4$, thus the conformal 
group is ${\rm{SO}}_{+}(5,1)$, the time-oriented 
Lorentz transformations. 
For $n \geq 1$, there are no orientation reversing 
diffeomorphisms, this follows from the Hirzebruch 
Signature Theorem since the signature is non-zero. 
However, $S^4$ does admit orientation-reversing 
diffeomorphisms, which is reflected in the 
fact that ${\rm{SO}}_{+}(5,1)$ has $2$ components. } 
\end{remark}

\section{LeBrun's twistor spaces}
\label{twistor}
Let ${\rm{Aut}}(\H^3;p_1,\dots,p_n)$ be the group of isometries of 
$\H^3$ which preserve the set of monopole points $p_1,\dots,p_n$.
In this section, we prove Theorem \ref{main1} in the 
case $n \geq 3$ by showing the following. 
\begin{proposition}\label{prop-hom}
Let $[g_{\LB}]$ be a LeBrun self-dual conformal class on $n\#\CP^2$ with 
monopole points $p_1, \dots,p_n\in \H^3$. Suppose $n\ge 3$.
Then there is a homomorphism 
\begin{align}
\rho:{\rm{Aut}}( g_{\LB})\lras
{\rm{Aut}}(\H^3;p_1,\dots,p_n)
\end{align}
such that $\rho (\Phi) = \phi$, where $\Phi$ is any 
lift of $\phi$ obtained in Proposition \ref{hypconf}. 
\end{proposition}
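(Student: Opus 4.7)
The plan is to proceed via twistor theory, exploiting the rigidity supplied by a complete linear system. First, I would recall that any conformal automorphism $\Phi$ of $(\hat X,[g_{\LB}])$ lifts canonically to a holomorphic automorphism $\tilde\Phi$ of the twistor space $Z$ which commutes with the real structure $\sigma$. This is a standard consequence of the functoriality of the twistor construction: $Z$ is determined by $[g_{\LB}]$, so ${\rm Aut}(g_{\LB})$ embeds into the group of $\sigma$-equivariant holomorphic automorphisms of $Z$, and it suffices to show that every such automorphism descends to a hyperbolic isometry preserving $\{p_1,\dots,p_n\}$.

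Second, I would identify the $\CC^*$-action on $Z$ coming from the complexification of the semi-free $\U(1)$-action $K_1$ of fiber rotation. For $n \geq 3$, the action $K_1$ will be shown below (in Lemma~\ref{lemma:sf}) to be the \emph{unique} semi-free $\U(1)$-action on $(\hat X,[g_{\LB}])$. Since conjugation by $\Phi$ sends semi-free $\U(1)$-subgroups of the identity component to semi-free $\U(1)$-subgroups, $\Phi$ must normalize $K_1$, and hence $\tilde\Phi$ normalizes the associated $\CC^*$-action on $Z$.

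Third, and this is the central step, I would use that the rational quotient map $Z \dashrightarrow Q$ for this $\CC^*$-action is induced by a \emph{complete} linear system $|\mathcal L|$ on $Z$ (this is the key feature that fails for $n=2$). Because $|\mathcal L|$ is complete, it is intrinsically determined by the line bundle $\mathcal L$, which is itself characterized by numerical/cohomological data invariant under any automorphism of $Z$. Hence $\tilde\Phi^*|\mathcal L|=|\mathcal L|$, and $\tilde\Phi$ descends to a biholomorphism $\bar\Phi$ of $Q$ compatible with the anti-holomorphic involution induced by $\sigma$. One then identifies the fixed set of this involution on $Q$ with the conformal compactification of $\H^3$, so that $\bar\Phi$ restricts to a conformal self-map of $\H^3 \cup \partial\H^3$, i.e.\ a hyperbolic isometry $\phi$. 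The $n$ monopole points correspond to distinguished points in $Q$ (images of the twistor lines over $p_1,\dots,p_n$, which are fixed by the $\CC^*$-action), so $\phi$ preserves the set $\{p_1,\dots,p_n\}$. Setting $\rho(\Phi)=\phi$ defines the required homomorphism, and the compatibility $\rho(\Phi)=\phi$ for lifts $\Phi$ coming from Proposition~\ref{hypconf} follows from the construction, since those lifts are by definition fiber-preserving over the hyperbolic isometry $\phi$ on $M_0 = \H^3\setminus\{p_1,\dots,p_n\}$.

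The main obstacle, which the subsequent sections must address, is the third step: one has to identify the relevant line bundle $\mathcal L$ on $Z$ explicitly (e.g.\ as an appropriate power of the anticanonical bundle twisted by fundamental divisors), verify completeness of $|\mathcal L|$, and describe $Q$ concretely enough to recognize the conformal structure of $\H^3$ inside it together with the marked points corresponding to $p_1,\dots,p_n$. A secondary point to check is the case when $\tilde\Phi$ \emph{inverts} the $\CC^*$-action rather than preserving it; this corresponds to orientation-reversing $\phi$ and must be handled so that $\rho$ is a homomorphism onto all of ${\rm Aut}(\H^3;p_1,\dots,p_n)$, not merely the orientation-preserving subgroup.
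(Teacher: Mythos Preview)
Your overall strategy matches the paper's: pass to the twistor space $Z$, use the complete linear system $|F|$ (where $F$ is the canonical square root of $-K_Z$) to obtain a rational map $\Psi:Z\to\CP^1\times\CP^1$, and then read off the induced hyperbolic isometry. However, several of your intermediate steps are either unnecessary or inaccurate.

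First, your Step~2 (normalizing the semi-free $\U(1)$-action via Lemma~\ref{lemma:sf}) is a detour the paper avoids entirely. Because $F$ is intrinsically defined from $K_Z$, every automorphism of $Z$ automatically acts on $H^0(Z,F)$ and hence on $|F|$; one never needs to know in advance that $\tilde\Phi$ normalizes the $\CC^*$-action. The completeness of $|F|$ for $n\ge 3$ is what makes the map $\Psi$ canonical, and this is the sole reason the argument works here but not for $n=2$.

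Second, your Step~4 misidentifies the geometry of $Q=\CP^1\times\CP^1$. The $\sigma$-fixed locus of $Q$ is \emph{not} the conformal compactification of $\H^3$; it is only the boundary sphere $\partial\H^3$. In the paper, $\H^3$ is recovered as the parameter space of \emph{minitwistor lines}: real irreducible $(1,1)$-curves in $Q$ disjoint from $(\CP^1\times\CP^1)^\sigma$. One must first check (Lemma~\ref{lemma-inv1}) that $\Phi$ preserves $\partial\H^3\subset\hat X$ (equivalently, the base locus $C_1\cup\ol C_1$ of $|F|$), and then argue that the induced automorphism of $Q$ sends minitwistor lines to minitwistor lines.

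Third, the monopole points do not correspond to \emph{points} of $Q$ but to the discriminant curves $\mathcal C_1,\dots,\mathcal C_n$ of the morphism $Z'\to Q$ (Proposition~\ref{prop-LB10}(iii)); these are real $(1,1)$-curves, hence minitwistor lines, and their preservation under $\bar\Phi$ is what forces $\rho(\Phi)\in{\rm Aut}(\H^3;p_1,\dots,p_n)$. Finally, the compatibility $\rho(\Phi)=\phi$ for lifts from Proposition~\ref{hypconf} is established in the paper via the Jones--Tod correspondence relating twistor lines in $Z$ to minitwistor lines in $Q$, not merely by inspection of the bundle construction.
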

\begin{remark}{\em
In the previous sections, we used the upper half space model 
of hyperbolic space. However, in this and the following sections, $\H^3$ will 
no longer refer to any specific model of hyperbolic $3$-space.}
\end{remark}
In the following we prove Proposition \ref{prop-hom} by using twistor spaces;
for background on twistor theory, see \cite{AHS}, \cite{Besse}. 
Let $Z$ be the twistor space of $[g_{\LB}]$ in Proposition \ref{prop-hom}, 
and ${\rm{Aut}}(Z)$ the group of holomorphic transformations of $Z$.
By the twistor correspondence, there is a canonical injective homomorphism
\begin{align}\label{hom1}
{\rm{Aut}}(g_{\LB})\lras {\rm{Aut}}(Z)
\end{align}
(see, for example, \cite[Proposition 2.1]{PedersenPoon}). 
Using this, we regard ${\rm{Aut}}(g_{\LB})$ as a subgroup of ${\rm{Aut}}(Z)$.
Let $F$ be the canonical square root of $-K_Z$ (the anticanonical line bundle).
Then the action of ${\rm{Aut}}(g_{\LB})$ on $Z$ naturally lifts to
the line bundle $F$.
Hence we obtain a homomorphism 
\begin{align}\label{hom2}
{\rm{Aut}}(g_{\LB})\lras {\rm{GL}}(H^0(Z,F)).
\end{align}
In general, this map will not be injective.

\subsection{Proof of Proposition \ref{prop-hom}}
For this, we first recall the following result on the structure of LeBrun twistor spaces.

\begin{proposition}\label{prop-LB10}
If $n\ge 3$, $\dim H^0(Z, F)=4$ holds. Further, if $\Psi:Z\to\CP^3$ denotes 
the rational map induced by the linear system $|F|$, we have the following.
(i) The base locus of $|F|$ consists of two smooth rational curves $C_1$ 
and $\ol C_1$, which are mapped to the boundary sphere 
$\partial \H^3\subset n\#\CP^2$ by the twistor fibration 
$Z\to n\#\CP^2$.
(ii) The image $\Psi(Z)$ is a non-singular quadratic surface 
$\CP^1\times\CP^1$. (iii) If $Z'\to Z$ denotes the blow-up 
of $Z$ at $C_1\cup \ol C_1$, the composition 
$Z'\to Z\to \CP^1\times\CP^1$ is holomorphic. 
Further, the discriminant locus consists of $n$ smooth rational curves 
$\mathcal C_1,\dots,\mathcal C_n$ of bidegree $(1,1)$, which  
canonically correspond to the monopole points $p_1,\dots,p_n$.
\end{proposition}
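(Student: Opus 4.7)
The geometric key is that $\CP^1\times\CP^1$ is the \emph{mini-twistor space} of $\HH^3$: its off-diagonal points parametrize oriented geodesics, and the diagonal is the ideal boundary $\partial\HH^3$. By the Jones--Tod correspondence, the $\CC^*$-action on $Z$ induced from the $K_1$-action on $\hat X$ has birational quotient $\CP^1\times\CP^1$, and I would build $\Psi$ this way. Concretely, a point of $Z$ is a pair $(x,J)$ with $J$ a $[g_{\LB}]$-compatible complex structure at $x$; for $x\in X_0$ lying over $\pi(x)\in\HH^3\setminus P$, $J$ distinguishes an oriented two-plane in $T_x\hat X$ whose $\pi$-image is an oriented direction at $\pi(x)$, hence an oriented geodesic, hence a point of $\CP^1\times\CP^1$. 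This map is $\CC^*$-invariant, holomorphic where defined, and generically a $\CP^1$-fibration onto all of $\CP^1\times\CP^1$, which will yield (ii) once it is identified with $\Psi$.

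\textbf{Sections of $F$ and the dimension count.} The K\"ahler structure on $X_0$ gives a distinguished section of the twistor projection $Z\to\hat X$, whose closure is a divisor $D_+\in|F|$; the conjugate complex structure gives $D_-\in|F|$. The $\CC^*$-weight decomposition of $H^0(Z,F)$, combined with $D_\pm$, should produce $4$ linearly independent sections whose pairwise ratios realize the four Segre coordinates on $\CP^1\times\CP^1\hookrightarrow\CP^3$. To show that these exhaust $H^0(Z,F)$, I would compute $\chi(Z,F)$ by Hirzebruch--Riemann--Roch using $c_1(Z)=2c_1(F)$ and the standard intersection numbers for twistor spaces of $n\#\CP^2$, and combine with Hitchin-type vanishing $h^i(Z,F)=0$ for $i\ge 1$ to conclude $h^0=\chi=4$.

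\textbf{Base locus, resolution, and discriminant.} The mini-twistor construction fails precisely over the ideal boundary $\partial\HH^3=\Sigma_4$; the twistor preimage of $\Sigma_4$ contains two smooth rational components $C_1,\ol C_1$, interchanged by the real structure, and a local calculation in the explicit coordinates of Section \ref{explicit} confirms that these are scheme-theoretically the base locus of $|F|$, proving (i). Blowing them up resolves $\Psi$ by a standard base-change argument, producing a holomorphic conic bundle $Z'\to\CP^1\times\CP^1$ whose generic fibre is the proper transform of a twistor line. Such a fibre degenerates exactly when the corresponding geodesic passes through some monopole point $p_j$; under the mini-twistor correspondence, the set of geodesics through a fixed point of $\HH^3$ is a smooth bidegree-$(1,1)$ curve $\mathcal C_j$, yielding the $n$ discriminant components.

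\textbf{Main obstacle.} The delicate step is pinning down the base locus scheme-theoretically: showing $\mathrm{Bs}|F|=C_1\cup\ol C_1$ with multiplicity one and no hidden embedded components near the compactification requires the explicit form of the connection of Section \ref{explicit}. Verifying that the four constructed sections really span $H^0(Z,F)$ is the second bottleneck; the Riemann--Roch calculation together with the vanishing enters essentially there.
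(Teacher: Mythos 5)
Your dimension count is the step that breaks. You propose to get $\dim H^0(Z,F)=4$ from Hirzebruch--Riemann--Roch plus a ``Hitchin-type vanishing'' $h^i(Z,F)=0$ for $i\ge 1$, but no such vanishing holds here. For a twistor space of $n\#\CP^2$ with positive scalar curvature one has $F^3=8-2n$, $F\cdot c_2=12$ (since $\chi(\mathcal{O}_Z)=1$), so Riemann--Roch gives $\chi(Z,F)=F^3+\tfrac16 F\cdot c_2=10-2n$. Since the proposition asserts $h^0(Z,F)=4$ for every $n\ge3$, the higher cohomology of $F$ is necessarily nonzero as soon as $n\ge4$ (e.g.\ $h^1(Z,F)\ge 2n-6$ if $h^2=h^3=0$). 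More conceptually, $h^0(Z,F)$ is not determined by topology: for generic positive-scalar-curvature self-dual metrics on $n\#\CP^2$ with $n$ large it drops, so any argument must use the specific structure of LeBrun's metrics, which your outline never does; indeed you never use the hypothesis $n\ge3$, even though the statement fails for $n=2$ (there $|F|$ is base point free and $5$-dimensional). The paper's route avoids this entirely: it restricts to a smooth member $S\in|F|$, which for LeBrun metrics is explicitly a blow-up of $\CP^1\times\CP^1$ at $n$ points on each of two $(1,0)$-curves, computes $h^0(K_S^{-1})=3$ and ${\rm Bs}\,|K_S^{-1}|=C_1\cup\ol C_1$ directly on that surface (this is where $n\ge3$ enters), and then uses $0\to\mathcal{O}_Z\to F\to K_S^{-1}\to 0$ together with $H^1(\mathcal{O}_Z)=0$ to get $h^0(F)=1+3=4$ and ${\rm Bs}\,|F|={\rm Bs}\,|K_S^{-1}|$. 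Note also a smaller slip in your setup: the divisor cut out by the K\"ahler complex structure is a degree-one divisor $D$ with $D+\ol D\in|F|$; $D$ itself is not a member of $|F|$.

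Your remaining geometric picture (mini-twistor space of $\H^3$, quotient by the $\CC^*$-action, discriminant curves of bidegree $(1,1)$ corresponding to the monopole points) is consistent with the paper, which indeed identifies $\Psi(Z)$ with the minitwistor space and cites LeBrun, Poon and Kreussler--Kurke for details. But as a proof it is circular at the crucial point: the whole content of the proposition (and the reason the $n\ge3$ case of the main theorem is easy) is that the rational quotient map is induced by the \emph{complete} linear system $|F|$, i.e.\ that the four sections you construct exhaust $H^0(Z,F)$ and that the base locus is exactly $C_1\cup\ol C_1$. Both of these are precisely the points your sketch defers to the broken vanishing argument and to an unspecified ``local calculation'' (the explicit formulas of Section~3 of the paper are metric/connection data, not holomorphic coordinates on $Z$, so they do not directly give the scheme-theoretic base locus). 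To repair the proposal you would need to replace the HRR-plus-vanishing step by an argument on a concrete member of $|F|$, as the paper does, or some equivalent use of the special structure of the LeBrun twistor spaces.
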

\begin{proof}
We first take any smooth member $S\in |F|$ and consider an exact sequence
\begin{align}
0\lras \mathscr O_{Z}\lras F\lras K_{S}^{-1}\lras 0
\end{align}
and use $H^{1}(\mathscr O_{Z})=0$ to conclude that 
$\dim H^{0}(F)=1+\dim H^{0}(K_{S}^{-1})$ and ${\rm Bs}\,|F|={\rm{Bs}}\,|K_{S}^{-1}|$.
Since $S$ is obtained from $\CP^{1}\times\CP^{1}$ by 
blowing-up $n$ points lying on a curve of bidegree $(1,0)$
and also $n$ points lying on another curve of the same bidegree,
we readily obtain $\dim H^0(K_S^{-1})=3$.
We also obtain that  ${\rm{Bs}}\,|K_{S}^{-1}|$  is exactly the strict transform
of the last two curves, for which we write $C_1$ and $\overline C_1$.
(Note that to conclude these, we have used the assumption $n\ge 3$.)
As $\CC^*$ acts on $S$ fixing any points on $C_1\cup\overline C_1$
and the twistor fibration $Z\to n\#\CP^2$ is $\U(1)$-equivariant,
the image of $C_1$ under the twistor fibration must be the unique 2-sphere 
fixed by the $\U(1)$-action on $n\#\CP^2$.
Thus we obtain (i).
For (ii),  there are two distinguished pencils of degree-one divisors, 
which form a conjugate pair.
These two pencils generate a 3-dimensional system in $|F|$.
As $\dim |F|=3$, this means $|F|$ is in fact generated by the two pencils.
This implies that $\Psi(Z)$ is a smooth quadric.
For the first part of (iii), it suffices to notice that the union of 
 the base locus of the above 2 pencils (of degree-one divisors) are 
exactly $C_1\cup\overline{C}_1$, and they are eliminated after 
blowing-up  $C_1\cup\overline{C}_1$.
See \cite[\S 7]{LeBrun1991}, \cite[\S 3]{Poon1992} and \cite[\S 3]{KreusslerKurke} 
for details.
\end{proof}
\begin{remark}\label{rmk-ss}
{\em
The proposition is true for arbitrary $n\ge 0$ if we consider 
$H^0(Z,F)^{{\rm{U(1)}}}$ (the subspace consisting of 
all $\U(1)$-invariant sections) 
instead of  $H^0(Z,F)$, where $\U(1)$ is the subgroup of
fiber rotations of ${\rm{Aut}}(g_{\LB})$ coming from the bundle 
construction.}
\end{remark}

\begin{lemma}
\label{deg1}
Let $\Psi:Z\to\CP^3$ and $\mathcal C_1,\dots,\mathcal C_n$  
be as in Proposition \ref{prop-LB10}.
Then the following are all degree-one divisors on $Z$:
(i) the inverse images of curves on $\CP^1\times\CP^1$
whose bidegree are $(1,0)$ or $(0,1)$, (ii) the inverse images 
$\Psi^{-1}(\mathcal C_j)$, $1\le j\le n$.
\end{lemma}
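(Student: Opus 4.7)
The plan is to realize each divisor appearing in the statement as a member of the linear system $|F|$, using that $\Psi:Z\dashrightarrow\CP^3$ is the rational map induced by $|F|$ and $\Psi(Z)=\CP^1\times\CP^1$ is the Segre quadric. For any hyperplane $H\subset\CP^3$, the pullback $\Psi^{-1}(H)$ is by construction the zero divisor of a section of $F$, and hence is a degree-one divisor.

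For part (ii), bidegree $(1,1)$ curves on the quadric are precisely its hyperplane sections, so $\mathcal{C}_j=H_j\cap(\CP^1\times\CP^1)$ for a unique $H_j\subset\CP^3$. Since $\Psi(Z)\subset\CP^1\times\CP^1$, we have $\Psi^{-1}(\mathcal{C}_j)=\Psi^{-1}(H_j)$, which is a degree-one divisor by the previous observation.

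For part (i), I would invoke the analysis already performed in the proof of Proposition \ref{prop-LB10}, where two distinguished pencils of degree-one divisors spanning $|F|$ were identified with the two conjugate rulings of $\CP^1\times\CP^1$. Concretely, composing the morphism $\pi:Z'\to\CP^1\times\CP^1$ from Proposition \ref{prop-LB10}(iii) with each projection $\mathrm{pr}_i:\CP^1\times\CP^1\to\CP^1$ gives a morphism $Z'\to\CP^1$ whose fibers are the pullbacks of the $(0,1)$, resp.\ $(1,0)$, rulings; the push-forwards of these fibers under $\sigma:Z'\to Z$ are members of a pencil in $|F|$. Thus each inverse image $\Psi^{-1}(R)$ of a ruling is a degree-one divisor. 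The main point of delicacy is that, since the base locus of $\Psi$ is $C_1\cup\overline{C}_1$, the symbol $\Psi^{-1}(R)$ must be interpreted consistently as $\sigma_*(\pi^{-1}(R))$ rather than as a naive set-theoretic preimage on $Z$; once this convention is fixed, both (i) and (ii) are immediate from the structural description of $\Psi$ established in Proposition \ref{prop-LB10}.
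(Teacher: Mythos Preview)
You have misread the statement. The lemma is \emph{not} asserting that each of the pullbacks $\Psi^{-1}(\mathcal C_j)$ and $\Psi^{-1}(R)$ is itself a degree-one divisor; it is asserting that \emph{every} degree-one divisor on $Z$ occurs as an irreducible component of one of these pullbacks. This is the reading required by the subsequent use in the paper (e.g.\ in the proof of Lemma~\ref{lemma-inv1}, where $\Psi^{-1}(\mathcal C_j)=D_j+\ol D_j$ is explicitly written as a \emph{sum} of two degree-one divisors).

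Under your reading, your argument for (ii) is in fact false. You write that $\Psi^{-1}(H)$, being the zero divisor of a section of $F$, ``is a degree-one divisor''. But $F\cdot L=2$ for any twistor line $L$ (this is the defining property of the half-anticanonical bundle), so members of $|F|$ are degree-\emph{two} divisors. In particular $\Psi^{-1}(\mathcal C_j)\in|F|$ has degree two, not one; it only yields degree-one divisors after splitting into $D_j+\ol D_j$. Your treatment of (i) via the two pencils is fine, but it does not help with the exhaustiveness claim the lemma is actually making.

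The paper's proof runs in the opposite direction: start with an arbitrary degree-one divisor $D$, invoke the fact (due to Poon) that $D+\ol D\in|F|$, and hence $D+\ol D=\Psi^{-1}(H)$ for some hyperplane $H\subset\CP^3$. Since $\Psi^{-1}(H)$ is reducible, either the hyperplane section $H\cap(\CP^1\times\CP^1)$ is already reducible---forcing it to be a union of a $(1,0)$-curve and a $(0,1)$-curve, which is case (i)---or $H\cap(\CP^1\times\CP^1)$ is an irreducible $(1,1)$-curve whose preimage is nonetheless reducible, which by the description of the discriminant locus in Proposition~\ref{prop-LB10}(iii) forces it to be one of the $\mathcal C_j$, i.e.\ case (ii). The key input you are missing is the relation $D+\ol D\in|F|$ for any degree-one divisor $D$.
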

\begin{proof}
If $D$ is a degree-one divisor, then $D+\ol{D}\in |F|$ holds by a 
Chern-class consideration (see \cite{Poon1992}).
Hence, since the rational map $\Psi$ is associated to $|F|$, 
any degree-one divisor is an irreducible component of a  reducible 
divisor of the form $\Psi^{-1}(H)$, where $H$ is a hyperplane 
in $\CP^3$. If the divisor $\Psi^{-1}(H)$ is reducible, then 
one of the following must clearly hold:
 $H\cap (\CP^1\times\CP^1)$ is reducible, or
 $H\cap (\CP^1\times\CP^1)$ is irreducible but 
$\Psi^{-1}(H)$ is reducible.
The former and latter  correspond to the cases (i) and (ii) 
in the lemma respectively.
\end{proof}
\begin{lemma}\label{lemma-inv1}
Suppose  $n\ge3$.
Then we have the following. (i) Any $\Phi\in {\rm{Aut}}(g_{\LB})$ leaves the boundary sphere $\partial\H^3$ (regarded as a subset of $n\#\CP^2$) invariant. 
(ii) Any $\Phi\in {\rm{Aut}}(g_{\LB})$ leaves the set of isolated fixed points invariant.
\end{lemma}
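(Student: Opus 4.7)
The strategy is to pass to the twistor space $Z$ of $[g_{\LB}]$ and exploit the structural results of Proposition~\ref{prop-LB10}. Via the twistor correspondence, ${\rm{Aut}}(g_{\LB})$ acts on $Z$ (holomorphically for orientation-preserving elements, and, after composition with the real structure, still by biholomorphisms for orientation-reversing ones). This action lifts to the canonical square root $F$ of $-K_Z$ and therefore preserves the linear system $|F|$. Consequently, each $\Phi\in{\rm{Aut}}(g_{\LB})$ preserves the base locus ${\rm Bs}\,|F|$ and descends to an automorphism $\widetilde\Phi$ of $\Psi(Z) = \CP^1\times\CP^1$ satisfying $\Psi\circ\Phi = \widetilde\Phi\circ\Psi$, while the twistor fibration $\mu : Z \to n\#\CP^2$ is $\Phi$-equivariant by construction.

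For (i), Proposition~\ref{prop-LB10}(i) gives ${\rm Bs}\,|F| = C_1\cup\ol C_1$ and $\mu(C_1\cup\ol C_1) = \partial\H^3$. Since $\Phi$ preserves the base locus and $\mu$ is equivariant, $\Phi(\partial\H^3) = \partial\H^3$, which is (i).

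For (ii), the induced automorphism $\widetilde\Phi$ of $\CP^1\times\CP^1$ must preserve the discriminant locus of the resolved morphism $Z'\to\CP^1\times\CP^1$ from Proposition~\ref{prop-LB10}(iii), which is $\mathcal C_1\cup\cdots\cup\mathcal C_n$; hence $\widetilde\Phi$ permutes the curves $\mathcal C_j$. To convert this into a permutation of the isolated fixed points, I would use Lemma~\ref{deg1}(ii) to decompose each $\Psi^{-1}(\mathcal C_j)$ as a conjugate pair $D_j+\ol D_j$ of degree-one divisors canonically associated to the monopole point $p_j$ in LeBrun's construction, and identify the twistor line $L_j = \mu^{-1}(\tilde p_j)$ with the real intersection $D_j\cap\ol D_j$. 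Then $\Phi$ permutes the pairs $\{D_j,\ol D_j\}$, hence the twistor lines $L_j$, and by equivariance of $\mu$ the set $\{\tilde p_1,\dots,\tilde p_n\}\subset n\#\CP^2$ is preserved.

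The main obstacle is the final identification in step (ii): knowing that the conjugate pair of degree-one divisors whose $\Psi$-image is $\mathcal C_j$ intersects exactly along the twistor line over $\tilde p_j$. This is a structural feature of LeBrun's twistor space, extractable from the explicit descriptions in \cite{LeBrun1991} and \cite{Poon1992}, and is the delicate geometric ingredient on which the proof of (ii) hinges.
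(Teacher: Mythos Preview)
Your proposal is correct and follows essentially the same approach as the paper's proof. For (i) you use invariance of the base locus $C_1\cup\ol C_1$ of $|F|$, and for (ii) you use that $\Phi$ descends to $\CP^1\times\CP^1$ and permutes the discriminant curves $\mathcal C_j$, hence permutes the conjugate pairs $D_j+\ol D_j=\Psi^{-1}(\mathcal C_j)$ and therefore the twistor lines $L_j=D_j\cap\ol D_j$; this is exactly what the paper does. The ``main obstacle'' you flag---that $D_j\cap\ol D_j$ is precisely the twistor line over the isolated fixed point $\tilde p_j$---is handled in the paper by direct citation to \cite[Proposition 3.6]{Poon1992} and \cite[\S 7]{LeBrun1991}, so your instinct to source it there is correct. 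One minor remark: your detour through orientation-reversing automorphisms is unnecessary here, since $n\#\CP^2$ with $n\ge 1$ admits no orientation-reversing diffeomorphisms (nonzero signature), so every $\Phi\in{\rm{Aut}}(g_{\LB})$ already acts holomorphically on $Z$.
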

\begin{proof}
As before, we regard $\Phi$ as an automorphism of $Z$.
For (i), by Proposition \ref{prop-LB10} (i) it suffices to show  
$\{\Phi(C_1),\Phi(\ol C_1)\}=\{C_1,\ol C_1\}$. But since 
$C_1\cup\ol C_1$ are the base locus of the system $|F|$ as 
in Proposition \ref{prop-LB10} (i), this is automatic. 
For (ii), let $L_1,\dots, L_{n}$ be the twistor lines over the 
isolated fixed points of the $\U(1)$-action.
Then we have $\Psi(L_j)=\mathcal C_j$ and 
$\Psi^{-1}(\mathcal C_j)=D_j+\ol D_j$, where $D_j$ and $\ol D_j$ 
are degree one divisors intersecting transversally along $L_j$ 
(\cite[Proposition 3.6]{Poon1992}, \cite[\S 7]{LeBrun1991}).
Further, we have 
\begin{align}
\{\Phi(D_j),\Phi(\ol D_j)\set 1\le j\le n\}=\{D_j,\ol D_j\set 1\le j\le n\},
\end{align}
 since the $\mathcal C_j$-s are discriminant curves of the morphism 
$Z'\to\CP^1\times\CP^1$ by Proposition \ref{prop-LB10}.
Since $\Phi$ commutes with the real structure, 
this means that $\{\Phi(L_j)\set1\le j\le n\}=\{L_j\set 1\le j\le n\}$, which implies 
(ii) of the lemma.
\end{proof}
\begin{remark}
{\em
The lemma says that if $n\ge 3$, any $\Phi\in {\rm{Aut}}(g_{\LB})$ 
preserves the open subset $X_0$ (on which $\U(1)$ acts freely).
Obviously this does not hold if $n=0$ or $1$.
Namely, the general automorphism of the standard metrics on
$S^4$ or $\CP^2$ does not preserve the boundary sphere 
$\partial\H^3$. We will show in the 
next subsection that the lemma also fails to hold when $n=2$. 
}
\end{remark}

By Proposition \ref{prop-LB10}, when $n\ge 3$ we obtain a homomorphism
\begin{align}\label{hom5}
{\rm{Aut}}(g_{\LB})\lras {\rm{Aut}}(\CP^1\times\CP^1).
\end{align}
Further, by LeBrun's construction \cite{LeBrun1991}, the image quadric 
$\CP^1\times\CP^1$ can be regarded as a quotient space 
of the twistor space by a $\CC^*$-action, where the last action 
is the complexification of the semi-free $\U(1)$-action on $Z$.
More intrinsically, $\CP^1\times\CP^1$ can be 
interpreted as the minitwistor space (in the sense of Hitchin \cite{Hitchin}) 
of the hyperbolic space $\H^3$.
This in particular means that $\H^3$ can be canonically 
identified with the space of minitwistor lines in 
$\CP^1\times\CP^1$.
Such lines are explicitly given as  real irreducible
curves of bidegree $(1,1)$  which are disjoint from 
$(\CP^1\times\CP^1)^{\sigma}$ 
(the real locus on $\CP^1\times\CP^1$).
Furthermore, as a consequence of 
\begin{align}
\Psi^{-1}((\CP^1\times\CP^1)^{\sigma})=\bigcup_{x\in \partial\H^3}L_x,
\end{align}
where $L_x$ denotes the twistor line over a point $x\in n\#\CP^2$,
there is a natural identification 
$(\CP^1\times\CP^1)^{\sigma}\simeq\partial\H^3$. 
By Lemma \ref{lemma-inv1}, we have 
$\Phi(\partial \H^3)=\partial \H^3$ (on $n\#\CP^2$).
From this it follows that the automorphism of 
$\CP^1\times\CP^1$ coming from any 
$\Phi\in {\rm{Aut}}(g_{\LB})$ (via \eqref{hom5}) maps 
real $(1,1)$-curves disjoint from 
$(\CP^1\times\CP^1)^{\sigma}$ to
real $(1,1)$-curves disjoint from 
$(\CP^1\times\CP^1)^{\sigma}$.
Hence it maps minitwistor lines to minitwistor lines.
This way, we obtain a homomorphism
\begin{align}\label{hom4}
\rho:{\rm{Aut}}(g_{\LB})\lras {\rm{Aut}}(\H^3).
\end{align}
Moreover, since the action of Aut($g_{\LB}$) on 
$\CP^1\times\CP^1$ preserves  
$\mathcal C_1,\dots,\mathcal C_n$ (as they are discriminant curves), 
the image of \eqref{hom4} is contained in 
${\rm{Aut}}(\H^3;p_1,\dots,p_n)$.

To finish the proof of Proposition \ref{prop-hom}, 
it remains to show that 
if $\Phi\in {\rm{Aut}}(g_{\LB})$ is one of the  
lifts (obtained in Proposition \ref{hypconf}) of some 
$\phi\in {\rm{Aut}}(\H^3;p_1,\dots,p_n)$,  then 
$\rho(\Phi)=\phi$ holds. Take any point 
$p\in\H^3 \setminus \{p_1,\dots,p_n\}$, and put $q=\phi(p)$.
Let $\tilde p\in X_0$ be any point belonging to the fiber over 
$p$ and let $\tilde q=\Phi(\tilde p)$.
Let $L_{\tilde p}$ and $L_{\tilde q}$ be the twistor lines 
over $\tilde p$ and $\tilde q$, respectively.
Letting $\Phi$ also denote the induced automorphism on 
$\CP^1\times\CP^1$, we have 
$\Phi(\Psi(L_{\tilde p}))=\Psi(L_{\tilde q})$ by construction.
By the result of Jones-Tod \cite{JonesTod}
 on the relation between Penrose correspondence (between self-dual 
4-manifolds and 3-dimensional twistor spaces) and Hitchin correspondence 
(between Einstein-Weyl 3-manifolds and minitwistor spaces), 
the points on $\H^3$ which correspond to  
the minitwistor lines $\Psi(L_{\tilde p})$ and $\Psi(L_{\tilde q})$
are exactly $p$ and $q$ respectively. 
This implies $(\rho(\Phi))(p)=q$, as required. This completes the 
proof of Proposition \ref{prop-hom}.
\section{Poon's projective model}
\label{2CP2}
In this section, we determine the group of all conformal isometries of 
Poon's metrics on $2\#\CP^2$.
Although Poon's metrics can be constructed by LeBrun's hyperbolic ansatz,  
it turns out that, in contrast to the case $n\ge 3$, not all conformal 
isometries come from isometries of $\H^3$.
More precisely, we show that  such lifts form a subgroup of 
index $2$ in the full conformal isometry group.
\subsection{Automorphism group of Poon's projective models}
\label{ss:autoproj}
In order to analyze the automorphism group in the case of $2\#\CP^2$, 
instead of LeBrun's projective model, it is more convenient to use 
Poon's projective model of the twistor spaces (these are of course equivalent, 
see \cite[Section 7]{LeBrun1991}). In this subsection we investigate 
the holomorphic automorphism group of the projective models. 
We begin with recalling the following result due to Poon \cite{Poon1986}.
\begin{proposition}[\cite{Poon1986}] 
\label{prop-poon10}
Let $g$ be a self-dual metric on $2\#\CP^2$ of positive scalar 
curvature and $Z$ the twistor space of $g$. Then (i) the linear system 
$|F|$ is base point free, $5$-dimensional, and its associated morphism 
$\Psi:Z\to\CP^5$ is bimeromorphic to its image. 
(ii) The image $\tilde{Z}:=\Psi(Z)$ is an intersection of the two 
hyperquadrics in $\CP^5$  defined by 
\begin{equation}\label{Q12}
Q_{\infty}=\{w_0w_1+z_2^2+z_3^2+w_4w_5=0\},\,\,
Q_0=\{2w_0w_1+\lambda z_2^2+(3/2)z_3^2+w_4w_5=0\}
\end{equation}
where $(w_0,w_1,z_2,z_3,w_4,w_5)$  is a homogeneous coordinate on 
$\CP^5$ and $\lambda$ is a real number satisfying $3/2<\lambda<2$.
(iii) The singular locus of  $\tilde{Z}$ consists of 4 points 
\begin{align}
\begin{split}
&P_1:=(1,0,0,0,0,0),\,\,\ol P_1:=(0,1,0,0,0,0), \\
&P_3:=(0,0,0,0,1,0), \,\, \ol P_3:=(0,0,0,0,0,1),
\end{split}
\end{align}
and all these are ordinary nodes.
(iv) The morphism $\Psi:Z\to \tilde{Z}$ is a small resolution of these $4$ nodes.
(v) The real structure on $\CP^5$  induced by that on $Z$ is given by
\begin{equation}\label{rs1}
(w_0,w_1,z_2,z_3,w_4,w_5)\mapsto(\ol w_1,\ol w_0,\ol z_2,-\ol z_3,-\ol w_5,-\ol w_4).
\end{equation}
\end{proposition}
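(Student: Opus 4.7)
The plan is to follow Poon's original argument in \cite{Poon1986}, addressing parts (i)--(v) in order. For (i), take a smooth real fundamental divisor $S \in |F|$ (existence via a standard genericity argument invoking the real structure) and apply the adjunction sequence
\begin{align*}
0 \lra \mathcal O_Z \lra F \lra K_S^{-1} \lra 0,
\end{align*}
together with $H^1(Z, \mathcal O_Z) = 0$ (which holds for any compact twistor space of a self-dual 4-manifold). This yields the key identifications $\dim H^0(F) = 1 + \dim H^0(K_S^{-1})$ and $\mathrm{Bs}\,|F| = \mathrm{Bs}\,|K_S^{-1}|$. For Poon's metric on $2\#\CP^2$, the surface $S$ turns out to be a smooth degree-$4$ del Pezzo (concretely, $\CP^1\times\CP^1$ blown up at $2 + 2$ points, in parallel with Proposition \ref{prop-LB10}), so Riemann--Roch gives $\dim H^0(K_S^{-1}) = 5$ and hence $\dim|F| = 5$. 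Base-point-freeness of $|F|$ reduces to the classical fact that $|K_S^{-1}|$ is base-point free on a smooth degree-$4$ del Pezzo; the same fact produces an anticanonical embedding $S\hookrightarrow \CP^4$ as a complete intersection of two quadrics, and feeding this back shows that $\Psi|_S$ is an embedding for every smooth member, which implies $\Psi:Z\to\CP^5$ is bimeromorphic onto its image.

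For (ii) and (iii), the two defining quadrics arise as follows: a Riemann--Roch calculation on $Z$ (using $F^{\otimes 2}=-K_Z$ and Hitchin-type vanishing for the anticanonical bundle) shows that the multiplication map $S^2H^0(F)\to H^0(2F)$ has a $2$-dimensional kernel. Since the intersection of two independent quadrics in $\CP^5$ has expected dimension $3$ and degree $4$, and $\tilde Z = \Psi(Z)$ is a $3$-fold contained in this intersection, a degree/dimension count forces $\tilde Z$ to equal the complete intersection. The explicit diagonal form \eqref{Q12} is then pinned down by equivariance under the semi-free $\U(1)$-action on $Z$: the induced action on $H^0(F) \cong \CC^6$ decomposes into weight spaces, with weights determined by local linearization at the four isolated $\U(1)$-fixed points on $\hat X$ (the two monopole points and the two boundary points $q_1, q_2$). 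Choosing coordinates $(w_0,w_1,z_2,z_3,w_4,w_5)$ adapted to this weight decomposition forces the pencil $\{sQ_\infty + tQ_0\}$ to take diagonal form, with the real modulus $\lambda$ encoding the ratio of the two monopole parameters $c_1, c_2$ from Section \ref{explicit}. Once the quadrics are in this form, the singular locus of $\tilde Z$ is precisely the set where the rank of $sQ_\infty + tQ_0$ drops below $5$; a direct calculation of the discriminant of the pencil locates exactly the four coordinate points $P_1, \ol P_1, P_3, \ol P_3$, and inspecting the local Hessian at each verifies it is an ordinary node.

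For (iv), since $Z$ is smooth and $|F|$ is base-point free with $\Psi|_S$ an embedding for the generic fundamental divisor, $\Psi$ contracts no divisor, so its exceptional locus has codimension at least $2$; this forces $\Psi$ to be a small resolution of the four ordinary nodes, with exceptional curves precisely the four twistor lines over the four $\U(1)$-fixed points in $\hat X$. For (v), the real structure on $\CP^5$ is the antiholomorphic involution dual to the action of the real structure on $H^0(F)$; compatibility with the weight decomposition and preservation of both $Q_\infty$ and $Q_0$ forces it into the shape \eqref{rs1} up to rescaling of coordinates, where the minus signs on $\ol z_3$, $\ol w_5$, $\ol w_4$ reflect the eigenvalues of the conjugation on the self-conjugate weight spaces. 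The main obstacle throughout is pinning down the precise normalizations and deriving the bound $3/2 < \lambda < 2$: this requires careful bookkeeping of $\U(1)$-weights, sign conventions for the real structure, and verification that exactly this range of $\lambda$ yields a smooth twistor space whose conformal class lies on $2\#\CP^2$ with positive scalar curvature (as opposed to giving rise to other possible small resolutions or singular models).
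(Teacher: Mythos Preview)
The paper does not prove this proposition; it is stated as a recall of Poon's result from \cite{Poon1986} and used as a black box throughout Section~\ref{2CP2}. So there is no proof in the present paper to compare your proposal against.

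That said, your outline is broadly faithful to the shape of Poon's original argument: the restriction sequence to a smooth real $S\in|F|$, the identification of $S$ as a degree-$4$ del Pezzo, the Riemann--Roch count giving $\dim|F|=5$, and the $2$-dimensional kernel of $S^2H^0(F)\to H^0(2F)$ producing the pencil of quadrics are all correct ingredients. You are also right that the normalization to the explicit form \eqref{Q12} and the derivation of the range $3/2<\lambda<2$ are the most delicate parts and are only sketched here.

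There is, however, one concrete error in your treatment of (iv). The exceptional curves of the small resolution $\Psi:Z\to\tilde Z$ are \emph{not} twistor lines. As the paper makes explicit later (see the proof of Proposition~\ref{prop:sr100}, the diagram \eqref{cd2}, and the discussion in Section~\ref{geometric}), the exceptional curves $C_1,\ol C_1,C_3,\ol C_3$ over the nodes $P_1,\ol P_1,P_3,\ol P_3$ are mapped by the twistor fibration onto the $K$-invariant $2$-spheres in $2\#\CP^2$ (the boundary spheres $\partial\H^3_j$), not to isolated points. The $G$-invariant twistor lines $L_j$ are instead the conics \eqref{tl1}--\eqref{tl4}, which lie in the smooth part of $\tilde Z$ and are not contracted by $\Psi$. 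Relatedly, your phrase ``the four isolated $\U(1)$-fixed points (the two monopole points and the two boundary points $q_1,q_2$)'' conflates two different group actions: for the semi-free $K_1$-action the points $q_1,q_2$ lie on the fixed $2$-sphere and are not isolated; they become isolated only for the full torus $K$. This distinction between exceptional curves and invariant twistor lines is exactly what drives the later analysis of which automorphisms of $\tilde Z$ lift through the small resolution, so it is worth getting right here.
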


The identity component of the conformal transformation group of 
Poon's conformal class is ${\rm{U(1)}}\times \rm{U(1)}$. 
Correspondingly, the identity component of holomorphic 
transformation group of Poon's twistor space is $\CC^*\times\CC^*$.
In the above coordinates, this action is explicitly given by
\begin{equation}\label{G-action1}
(w_0,w_1,z_2,z_3,w_4,w_5)\mapsto(sw_0,s^{-1}w_1,z_2,z_3,tw_4,t^{-1}w_5),
\,\,(s,t)\in\CC^*\times\CC^*,
\end{equation}
which preserves the quadrics $Q_{\infty}$ and $Q_0$.
The map \eqref{G-action1} commutes  with the real structure \eqref{rs1}
if and only if $|s|=|t|=1$.

In the following we put $K={\rm{U(1)}}\times \rm{U(1)}$, and 
$G=\CC^*\times\CC^*$ for simplicity.
The $K$-action on $2\#\CP^2$ has exactly 4 fixed points.
Correspondingly, there are four $G$-invariant twistor lines in $Z$.
\begin{definition}{\em
Define the two real numbers
$\alpha:=\sqrt{4-2\lambda}$ and $\beta:=\sqrt{2\lambda-2}$.}
\end{definition} 
We remark that since $3/2<\lambda < 2$, we have the inequalities $0 < \alpha<\beta$.
These numbers will play an important role in the following.  
\begin{lemma}\label{lemma-inv7}
(i) Any $\Phi\in {\rm{Aut}}(g)$ leaves the set of four $K$-fixed points 
(on $2\#\CP^2$) invariant.
(ii) The image under $\Psi$ of the twistor lines over these $4$ points  
are conics whose equations are respectively given by
\begin{equation}\label{tl1}
\{\alpha z_2- \i z_3=w_4=w_5=w_0w_1+(2\lambda-3)z_2^2=0\},
\end{equation}
\begin{equation}\label{tl2}
\{\alpha z_2+ \i z_3=w_4=w_5=w_0w_1+(2\lambda-3)z_2^2=0\},
\end{equation}
\begin{equation}\label{tl3}
\{\beta z_2- \i z_3=w_0=w_1=(3-2\lambda)z_2^2+w_4w_5=0\},
\end{equation}
\begin{equation}\label{tl4}
\{\beta z_2+\i z_3=w_0=
w_1=(3-2\lambda)z_2^2+w_4w_5=0\}.
\end{equation}
\end{lemma}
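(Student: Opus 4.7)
For part (i), the strategy is to exploit normality of the identity component. By Proposition~\ref{commprop} together with the dimension bound of \cite{Poon1994} (as used in the proof of Theorem~\ref{summarytheorem}), the identity component of ${\rm{Aut}}(g)$ equals $K:=\U(1)\times\U(1)$. Since any identity component is normal, every $\Phi\in{\rm{Aut}}(g)$ satisfies $\Phi^{-1}K\Phi=K$, and therefore permutes the $K$-fixed locus. The toric $K$-action on $2\#\CP^2$ has exactly four isolated fixed points, namely the corners of the moment polytope where the spheres $\Sigma_1,\Sigma_2,\Sigma_3,\Sigma_4$ meet pairwise; thus $\Phi$ permutes this four-element set.

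For part (ii), I would observe that each twistor line $L_p\subset Z$ over one of the four $K$-fixed points $p$ is $K$-invariant, hence $G$-invariant, where $G=\CC^*\times\CC^*$ is the holomorphic extension of $K$. Because $-K_Z|_L\cong\mathcal O(4)$ on any twistor line, $F|_{L_p}\cong\mathcal O(2)$, so $\Psi|_{L_p}$ is a closed embedding and $\Psi(L_p)$ is a smooth real conic contained in $\tilde Z$. The task therefore reduces to classifying all $G$-invariant irreducible real conics in $\tilde Z$ and identifying them with the four curves \eqref{tl1}--\eqref{tl4}.

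The classification is a finite calculation. The weight decomposition of $\CC^6$ under \eqref{G-action1} is $\langle w_0\rangle\oplus\langle w_1\rangle\oplus\langle z_2,z_3\rangle\oplus\langle w_4\rangle\oplus\langle w_5\rangle$, so any $G$-invariant $2$-plane $\Pi\subset\CP^5$ is cut out by three weight-vector linear equations; equivalently, $\Pi$ is obtained by setting two of $w_0,w_1,w_4,w_5$ to zero together with one relation $c_1z_2+c_2z_3=0$. Substituting into $Q_\infty$ and $Q_0$ and requiring that the two restrictions be proportional (so that $\Pi\cap\tilde Z$ is positive-dimensional), a direct coefficient-matching forces either $\{w_4=w_5=0\}$ with $c_2/c_1=\mp\i\alpha$, giving \eqref{tl1} and \eqref{tl2}, or $\{w_0=w_1=0\}$ with $c_2/c_1=\mp\i\beta$, giving \eqref{tl3} and \eqref{tl4}; the values $\alpha^2=4-2\lambda$ and $\beta^2=2\lambda-2$ fall out directly from the matching conditions. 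Reality with respect to \eqref{rs1} is checked by direct substitution on each conic. Since there are four $K$-fixed points and we have produced exactly four $G$-invariant real smooth conics in $\tilde Z$, the four twistor lines embed via $\Psi$ as these four conics.

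The main obstacle is the case analysis ruling out all other $G$-invariant $2$-planes: when the two zeroed coordinates come one from each pair $\{w_0,w_1\}$, $\{w_4,w_5\}$, or when fewer than two coordinates are set to zero, the restrictions of $Q_\infty$ and $Q_0$ turn out to be linearly independent quadratic forms on $\Pi$, so $\Pi\cap\tilde Z$ is $0$-dimensional and contains no conic. This bookkeeping is short but is the one place where the specific coefficients in \eqref{Q12} genuinely enter the argument.
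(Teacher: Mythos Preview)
Your proof of (i) is correct and considerably shorter than the paper's. The paper does not invoke normality of the identity component; instead it classifies all pencils of degree-one divisors on $Z$ (via the two projections $f_1,f_3:\tilde Z\to\CP^1\times\CP^1$), locates the eight $G$-invariant members $D_j,\ol D_j$, and distinguishes them from non-$G$-invariant members by showing the former are blow-ups of the Hirzebruch surface $\Sigma_2$ while the latter are blow-ups of $\CP^1\times\CP^1$. Since any $\Phi$ preserves degree and biholomorphism type, it must permute the $D_j\cap\ol D_j$, which are exactly the four $G$-invariant twistor lines. Your argument bypasses all of this, though you should note that the paper's longer route is not wasted: the divisors $D_j,\ol D_j$ and their explicit equations are used repeatedly in the remainder of Section~\ref{2CP2} to specify the small resolutions and decide which automorphisms of $\tilde Z$ lift to $Z$.

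Your approach to (ii) is also different from the paper's. The paper finds the four conics by locating the real $G$-fixed points on the image quadrics $f_1(\tilde Z)$ and $f_3(\tilde Z)$ and pulling back; you instead classify all $G$-invariant $2$-planes $\Pi\subset\CP^5$ and test when $\Pi\cap\tilde Z$ is a smooth conic. This is a legitimate alternative, and the outcome is the same. One correction, however: in your final paragraph the claim that the restrictions of $Q_\infty$ and $Q_0$ to the excluded $\Pi$'s are always \emph{linearly independent} is false. For example, on $\Pi=\{w_0=w_4=0,\ z_3=\nu z_2\}$ both restrictions are scalar multiples of $z_2^2$, and on $\Pi=\langle w_0,w_1,w_4\rangle$ both are multiples of $w_0w_1$. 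The correct statement is that in every excluded case the intersection $\Pi\cap\tilde Z$ is either zero-dimensional or a singular (reducible or non-reduced) conic; in either case it cannot be the image of a twistor line. With that amendment the argument goes through.
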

\begin{proof}
For (i), consider the two linear projections 
$f_j:\CP^5\to\CP^3$ ($j=1,3$) defined by
\begin{align}
\label{proj1}
f_1(w_0,w_1,z_2,z_3,w_4,w_5)&=(z_2,z_3,w_4,w_5),\\
\label{proj2}
f_3(w_0,w_1,z_2,z_3,w_4,w_5)&=(w_0,w_1,z_2,z_3).
\end{align}
By an elementary computation, we have
\begin{align}
\label{image1}
f_1(\tilde{Z})=\{\alpha^2 z_2^2+z_3^2+2w_4w_5=0\},\,\,
f_3(\tilde{Z})=\{2w_0w_1+\beta^2z_2^2+z_3^2=0\}.
\end{align}
Intrinsically, the composition $f_j\circ\Psi:Z\to\CP^3$ 
is the meromorphic map associated to the linear system corresponding 
to the subspace $H^0(Z, F)^{G_j}$, where $G_1$ and $G_3$ are 
$\CC^*$-subgroups of $G$ defined by 
\begin{equation}\label{G1}
G_1=\{{\rm{diag}}(s,s^{-1},1,1,1,1)\in {\rm{PGL}}(6,\CC)\set s\in \CC^*\}
\end{equation} and
\begin{equation} \label{G3}
G_3=\{{\rm{diag}}(1,1,1,1,t,t^{-1})\in {\rm{PGL}}(6,\CC)\set t\in \CC^*\}.
\end{equation}
Since $\alpha\beta\neq 0$ by Poon's constraint $(3/2)<\lambda<2$, 
\eqref{image1} means that the images $f_1(\tilde{Z})$ and $f_3(\tilde{Z})$ are 
non-singular quadrics.
Hence both are isomorphic to a product $\CP^1\times\CP^1$. 
(Both of these two rational maps from $Z$ to $\CP^1\times\CP^1$ 
exactly correspond to the map $\Psi:Z\to\CP^1\times\CP^1$ 
for LeBrun twistor spaces considered above for $n \geq 3$).
Then by taking the pull-back of pencils on 
$\CP^1\times\CP^1$ of bidegree 
$(1,0)$ and $(0,1)$, we obtain 2 pencils on $Z$ for each of $j=1$ and $j=3$.
Hence we obtain $4$ pencils on $Z$ in total.
Since $(f_j\circ\Psi)^*\mathcal O(1)\simeq F$ 
and hyperplane sections of the quadrics are bidegree $(1,1)$, 
members of the 4 pencils are degree one, 
since the intersection number of the divisor with twistor lines is one.
On the other hand, by \cite[Lemma 1.9]{Poon1992}, for $2\#\CP^2$  
there are at most $4$ degree one line bundles on $Z$ which have a 
non-trivial section.
Further, since $\dim |D|\le 1$ for any degree 1 divisor $D$ on any 
twistor space on $n\#\mathbb{CP}^2$ by \cite[Lemma 1.10 (2)]{Poon1992},
these 4 pencils have mutually different Chern classes.
This implies that there are no pencils of degree 
one other than the above $4$ ones.
Obviously, the $G$-action preserves each of these pencils.
Furthermore, it can be readily seen by 
\eqref{G-action1}, \eqref{proj1}, \eqref{proj2}, and \eqref{image1} that 
$G$ acts non-trivially on the parameter space ($\CP^1$)  of the pencils.
Hence each pencil has precisely two $G$-invariant members, 
so that we have eight $G$-invariant degree one divisors in total.
By \eqref{rs1}, it is clear that the two $G$-invariant divisors 
in the same pencil form a conjugate pair. So we may write 
$\{D_j,\ol D_j\set 1\le j\le 4\}$ for the set of $G$-invariant 
degree one divisors.

We next compute the defining equations of the images of 
these $8$ divisors in $\CP^5$ (under $\Psi$) 
in the following way.
First, by using \eqref{image1}, we can obtain equations of  the 
four $G$-invariant curves of bidegrees $(1,0)$ or $(0,1)$.
(For instance, one of them is given by $\alpha z_2- \i z_3=w_4=0$.)
Next, substituting the equations into  (any one of)  \eqref{Q12}, 
we obtain the equations of the images $\Psi(D_j)$ and $\Psi(\ol{D}_j)$.
(For the above curve,  the equations become 
$\alpha z_2- \i z_3=w_4=w_0w_1+(2\lambda-3)z_2^2=0$.)
The last equations imply that $\Phi(D_j)$ is a quadratic cone 
in $\CP^3$ and that its vertex is exactly one of the 
four singular points of $\tilde{Z}=\Psi(Z)$.
(For the above case $\ol{P}_3$ is contained as the vertex.)
Recall that $\Psi$ is the morphism which contracts the 
four rational curves, and that the images of the curves are exactly 
the singular points of $\tilde{Z}$.
On the other hand, by (\cite[Lemma 1.9]{Poon1992}), any 
degree-one divisor is non-singular.
Therefore  the morphisms $D_j\to\Psi(D_j)$ and $\ol{D}_j\to\Psi(\ol D_j)$
factor through the minimal resolution  of the quadratic cones.
Then again by (\cite[Lemma 1.10]{Poon1992}), $D_j$ and $\ol D_j$ 
are obtained from 
$\Sigma_2=\mathbb P(\mathcal O\oplus \mathcal O(2))$ 
(the minimal resolution of the cone) by blowing-up one point.

In a similar fashion, we can compute the defining equations of $\Psi(D)$
for a non-$G$-invariant degree-one divisor $D$.
 (For instance, one of them is given by
\begin{align} 
w_4-c(\alpha z_2- \i z_3)=\alpha z_2+ \i z_3+2cw_5=
 2w_0w_1+(2\lambda-2)z_2^2+z_3^2=0,
\end{align}
where $c\in \CC^*$.)
From these (and also by the constraint $3/2<\lambda<2$), 
we obtain that $\Psi(D)$ is biholomorphic to a non-singular 
quadric in $\CP^3$; namely $\CP^1\times\CP^1$.
Then again by \cite[Lemmas 1.9 and 1.10]{Poon1992} we obtain that 
the divisor $D$ is obtained from $\CP^1\times\CP^1$ 
by blowing-up one point.
Since the one point blow-up of $\Sigma_2$ and that of 
$\CP^1\times\CP^1$ cannot be biholomorphic, 
we conclude that the $G$-invariant divisors $D_j, \ol D_j$ 
and non-$G$-invariant divisors $D$ cannot be biholomorphic.

For a given  $\Phi\in{\rm{Aut}}(g)$, if we use the same letter to 
denote the induced automorphism of $Z$, $\Phi$ clearly 
preserves the set of $4$ pencils (as any $\Phi\in{\rm{Aut}}(Z)$ 
preserves the degree of divisors).
Further, by the above distinction of complex structure between 
$G$-invariant and non-$G$-invariant members, the set of 
$G$-invariant members (which are explicitly given by 
$\{D_j,\ol D_j\set 1\le j\le 4\}$) are preserved under $\Phi$.
As $\Phi$ preserves the real structure, this means that 
$\Phi$ preserves the set $\{D_j\cap\ol D_j\set 1\le j\le 4\}$. 
Since these are exactly the set of $G$-invariant twistor lines,
this implies the claim (i)  of the lemma.

For (ii) we notice that each $D_j+\ol D_j$ is contracted to a 
reducible curve of bidegree $(1,1)$ under precisely one of the 
two rational maps $f_1\circ\Psi$ and $f_3\circ\Psi$.
Therefore each twistor line $L_j=D_j\cap \ol D_j$ is mapped 
to a real $G$-fixed point on (one of) the image quadrics.
On the quadric $f_1(\tilde{Z})\simeq\CP^1\times\CP^1$, 
there are exactly two real $G$-fixed points, and they are 
explicitly given by 
\begin{align*}
\{\alpha z_2- \i z_3=w_4=w_5=0\}, \mbox{ and } 
\{\alpha z_2+\i z_3=w_4=w_5=0\}.
\end{align*}
Similarly, on the quadric $f_3(\tilde{Z})$, real $G$-fixed points 
are explicitly given by 
\begin{align*}
\{\beta z_2-\i z_3=w_0=w_1=0\}, \mbox{ and } 
\{\beta z_2+ \i z_3=w_0=w_1=0\}.
\end{align*}
Computing the inverse images of these 4 points under $f_1$ and $f_3$ 
(namely substituting these into the equations \eqref{Q12}), 
we obtain the desired equations
for the images of $G$-invariant twistor lines.
\end{proof}

The homomorphism \eqref{hom2} and the coordinates 
$(w_0,w_1,z_2,z_3,w_4,w_5)$ give a homomorphism
\begin{align}\label{hom3}
{\rm{Aut}}(g)\lras {\rm{GL}}(6,\CC).
\end{align} 
We shall obtain the image of \eqref{hom3} explicitly.
Take any $\Phi\in{\rm{Aut}}(g)$ and let   
$U\in{\rm{GL}}(6,\CC)$ be its image.
Then as in the case of $n\ge3$, $U$ preserves the variety $\tilde{Z}$.
Hence $U$ preserves the singular set $\{P_1,P_3,\ol P_1,\ol P_3\}$.
Taking the real structure into account,  
the following two possibilities can occur:

\vspace{2mm}
(I) $\{U(P_1), U(\ol P_1)\}=\{P_1,\ol P_1\}$ and  
$\{U(P_3), U(\ol P_3)\}=\{P_3,\ol P_3\}$,

(II) $\{U(P_1), U(\ol P_1)\}=\{P_3,\ol P_3\}$ and 
$\{U(P_3), U(\ol P_3)\}=\{P_1,\ol P_1\}$.

\vspace{2mm}
\noindent
For case (I), using the fact that $U$ commutes 
with the real structure \eqref{rs1}, it is easy to deduce that 
$U$ is of the form
\begin{eqnarray}\label{U1}
\left(
\begin{array}{ccc}
A_{11}&A_{12} &O \\
O&A_{22} &O \\
O& A_{32}& A_{33}\\
\end{array} 
\right),
\end{eqnarray}
where $A_{12},A_{22}$ and $A_{32}$ are $2\times 2$ 
matrices with $\det A_{22}\neq 0$ and
\begin{eqnarray}\label{ent31}
A_{11}=\left( 
\begin{array}{cc}
a &0 \\
0 &\ol a \\
\end{array} 
\right){\text{ or }}
\left( 
\begin{array}{cc}
0 &a \\
\ol a &0 \\
\end{array} 
\right),\,\,
A_{33}=\left( 
\begin{array}{cc}
b &0 \\
0 &\ol b \\
\end{array} 
\right){\text{ or }}
\left( 
\begin{array}{cc}
0 &b \\
\ol b &0 \\
\end{array} 
\right),
\end{eqnarray}
where $a,b\in\CC^*$.
Similarly, for case (II), $U$ is of the form
\begin{eqnarray}\label{U2}
\left(
\begin{array}{ccc}
O&A_{12} &A_{13} \\
O&A_{22} &O \\
A_{31}& A_{32}& O\\
\end{array} 
\right),
\end{eqnarray}
where $A_{12},A_{22}$ and $A_{32}$ are $2\times 2$ 
matrices with $\det A_{22}\neq 0$ and
\begin{eqnarray}\label{ent32}
A_{13}=\left( 
\begin{array}{cc}
a &0 \\
0 &-\ol a \\
\end{array} 
\right){\text{ or }}
\left( 
\begin{array}{cc}
0 &a \\
-\ol a &0 \\
\end{array} 
\right),\,\,
A_{31}=\left( 
\begin{array}{cc}
b&0 \\
0 &-\ol b \\
\end{array} 
\right){\text{ or }}
\left( 
\begin{array}{cc}
0 &b \\
-\ol b &0 \\
\end{array} 
\right)
\end{eqnarray}
where $a,b\in\CC^*$.

Using Lemma \ref{lemma-inv7}, we can deduce another 
restriction for the $6\times 6$ matrix $U$ as follows.

\begin{lemma}\label{lemma-zero}
(i) In the presentations \eqref{U1} and \eqref{U2},  $A_{12}=A_{32}=O$ holds.
(ii) If $U$ belongs to the case (I), the matrix $A_{22}$ must be of the form
\begin{equation}
\label{ent33}
A_{22}=
c\left(
\begin{array}{cc}
     1&0   \\
     0&  1 \\
\end{array}
\right)
\,\,\,{\text{ or }}\,\,\,
c\left(
\begin{array}{cc}
     1&0   \\
     0&  -1 \\
\end{array}
\right),\,\,c\in\mathbb R^*.
\end{equation}
(iii) If $U$ belongs to the case (II), we have
\begin{equation}
\label{ent34}
A_{22}=
c\left(
\begin{array}{cc}
     0&1   \\
     \alpha\beta&  0 \\
\end{array}
\right)
\,\,\,{\text{ or }}\,\,\,
c\left(
\begin{array}{cc}
     0&-1   \\
     \alpha\beta&  0 \\
\end{array}
\right),\,\,c\in \i \mathbb R^*.
\end{equation}

\end{lemma}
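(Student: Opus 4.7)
The plan is to extract constraints on $U$ from the explicit equations \eqref{tl1}--\eqref{tl4} of the four $G$-invariant twistor lines. Observe that $L_1, L_2$ lie in the $3$-plane $\{w_4=w_5=0\}$ and both pass through $P_1,\ol P_1$, while $L_3, L_4$ lie in $\{w_0=w_1=0\}$ and pass through $P_3,\ol P_3$. Lemma \ref{lemma-inv7}(i) says that $U$ permutes these four lines, and combined with the case split this tells us that in case (I), $U$ preserves each of the pairs $\{L_1,L_2\}$ and $\{L_3,L_4\}$ setwise, while in case (II), $U$ swaps the two pairs.

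For part (i) in case (I), I would parametrize a generic point of $L_1$ as $(w_0,w_1,z_2,-\i\alpha z_2,0,0)$ and demand that its $U$-image, computed from \eqref{U1}, still lie in $\{w_4=w_5=0\}$. Reading off the last two components immediately gives $A_{32}(1,-\i\alpha)^T=0$, and the analogous computation with $L_2$ gives $A_{32}(1,\i\alpha)^T=0$; since $\alpha\neq 0$, these two vectors are linearly independent, forcing $A_{32}=O$. The same reasoning applied to $L_3$ and $L_4$, this time reading off the first two components of the image, yields $A_{12}=O$. Case (II) is handled identically using \eqref{U2}, noting that now $L_1$ is forced into $\{w_0=w_1=0\}$ and $L_3$ into $\{w_4=w_5=0\}$.

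For parts (ii) and (iii), once $A_{12}=A_{32}=O$, the map induced by $U$ on the $(z_2,z_3)$-factor is precisely $A_{22}$. The twistor-line equations then imply that $A_{22}$ must stabilize the four-point set $\{\pm\i\alpha,\pm\i\beta\}\subset\CP^1$ in the affine coordinate $u=z_3/z_2$, preserving the pairs individually in case (I) and interchanging them in case (II). Using $\alpha\neq\pm\beta$, the M\"obius transformations that accomplish this are exhausted by the Klein four group $\{{\rm id},\,u\mapsto -u,\,u\mapsto\alpha\beta/u,\,u\mapsto-\alpha\beta/u\}$: the first two preserve each pair and the last two swap them. Translating these four M\"obius transformations back to $2\times 2$ matrices (up to an overall nonzero scalar $c$) reproduces exactly the shapes appearing in \eqref{ent33} and \eqref{ent34}.

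The last step is to pin down the range of $c$ by imposing that $A_{22}$ commute with the real structure \eqref{rs1}, whose restriction to the $(z_2,z_3)$-factor is the antilinear involution $(z_2,z_3)\mapsto(\ol z_2,-\ol z_3)$. A short direct computation shows that for the diagonal forms of case (I) this forces $c=\ol c$, hence $c\in\RR^*$, while for the anti-diagonal forms of case (II) it forces $c+\ol c=0$, hence $c\in\i\RR^*$. I expect the only non-routine step to be correctly enumerating the M\"obius stabilizer of the four-point configuration; the rest is sign-bookkeeping inside a clean case split.
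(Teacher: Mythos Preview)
Your proposal is correct and follows essentially the same approach as the paper's proof. Both arguments extract $A_{12}=A_{32}=O$ by applying $U$ to generic points on the four $G$-invariant twistor lines and reading off the vanishing of the appropriate coordinates, then determine $A_{22}$ from its induced action on the four points $(1,\pm i\alpha),(1,\pm i\beta)\in\CP^1$, and finally fix the scalar $c$ via commutativity with the real structure on the $(z_2,z_3)$-factor. Your packaging of the $A_{22}$ step through the Klein four-group stabilizer of the four-point configuration is a slightly cleaner conceptual framing than the paper's explicit case-by-case verification of which permutations are realizable by a projective transformation, but the underlying content is identical.
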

\begin{proof}
First, we note that by Lemma \ref{lemma-inv7}, $U$ has to leave the 
set of 4 conics \eqref{tl1}--\eqref{tl4} invariant.
In the case (I), namely if $\{U(P_1),U(\ol{P}_1)\}=\{P_1,\ol P_1\}$, 
the set of the two conics $\{\eqref{tl1},\eqref{tl2}\}$ must be 
preserved under $U$, since \eqref{tl1} and \eqref{tl2} contain 
$P_1$ and $\ol P_1$, and \eqref{tl3} and \eqref{tl4} do not.
Similarly, the set $\{\eqref{tl3},\eqref{tl4}\}$ must also be 
preserved under $U$.

A generic point on the conics \eqref{tl1} and \eqref{tl2} is of the form
$(w_0,w_1,1,\mp \i \alpha,0,0)$.
Since 
\begin{equation}\label{mat1}
\left(
\begin{array}{ccc}
A_{11} & A_{12}  & O  \\
O  & A_{22}  &  O \\
O  & A_{32}  & A_{33}
\end{array}
\right)
\left(
\begin{array}{c}
  w_0  \\
  w_1\\
  1\\
  \mp \i \alpha\\
  0\\
  0    
\end{array}
\right)
=\left(
\begin{array}{c}
  \ast\\
  \ast\\
  A_{22}\left(
  \begin{array}
 {c}
  1\\
  \mp \i \alpha
  \end{array}\right)
  \\
  A_{32}\left(
  \begin{array}
 {c}
  1\\
  \mp \i \alpha
  \end{array}\right)  
\end{array}\right),
\end{equation}
and these points still belong to \eqref{tl1} or \eqref{tl2}, we obtain 
$$
A_{32}
\left(
\begin{array}{c}
  1  \\
  -\i\alpha    
\end{array}
\right)
=
A_{32}
\left(
\begin{array}{c}
  1  \\
  \i\alpha    
\end{array}
\right)
=
\left(
\begin{array}{c}
  0  \\
  0    
\end{array}
\right).
$$
Since $\alpha=\sqrt{4-2\lambda}\neq0$, we obtain $A_{32}=0$.
Similarly, considering the analogous  requirement for 
\eqref{tl3} and \eqref{tl4}, we obtain $A_{12}=0$. 

Thus we have obtained the claim (i) for the case (I).
For the case (II), namely if  
$\{U(P_1),U(\ol{P}_1)\}=\{P_3,\ol P_3\}$, the sets of the two conics 
$\{$\eqref{tl1}, \eqref{tl2}$\}$ and $\{$\eqref{tl3}, \eqref{tl4}$\}$ 
must be interchanged under $U$.
From this we can again deduce $A_{12}=A_{32}=O$  by similar computations.
Hence we obtain (i).

Next we show (ii).
Suppose $U$ belongs to the case (I).
Then since the right hand side of  \eqref{mat1} belongs to 
the conics \eqref{tl1} or \eqref{tl2}, as points on 
$\CP^1$ we have either 
\begin{equation}\label{mat5}
A_{22}\left(
  \begin{array}
 {c}
  1\\
  \i\alpha
  \end{array}\right)
  =
  \left(
  \begin{array}
 {c}
  1\\
  -\i\alpha
  \end{array}\right)
  \,\,{\text{ and }}\,\,
  A_{22}\left(
  \begin{array}
 {c}
  1\\
  -\i\alpha
  \end{array}\right)
  =
  \left(
  \begin{array}
 {c}
  1\\
  \i\alpha
  \end{array}\right)
\end{equation}
or
\begin{equation}\label{mat6}
A_{22}\left(
  \begin{array}
 {c}
  1\\
  \i\alpha
  \end{array}\right)
  =
  \left(
  \begin{array}
 {c}
  1\\
  \i\alpha
  \end{array}\right)
  \,\,{\text{ and }}\,\,
  A_{22}\left(
  \begin{array}
 {c}
  1\\
  -\i\alpha
  \end{array}\right)
  =
  \left(
  \begin{array}
 {c}
  1\\
  -\i\alpha
  \end{array}\right),
\end{equation}
according to whether $U$ interchanges \eqref{tl1} and \eqref{tl2} or not.
Similarly, by using the computations  to deduce $A_{12}=0$, either 
\begin{equation}\label{mat7}
A_{22}\left(
  \begin{array}
 {c}
  1\\
  \i\beta
  \end{array}\right)
  =
  \left(
  \begin{array}
 {c}
  1\\
  -\i\beta
  \end{array}\right)
  \,\,{\text{ and }}\,\,
  A_{22}\left(
  \begin{array}
 {c}
  1\\
  -\i\beta
  \end{array}\right)
  =
  \left(
  \begin{array}
 {c}
  1\\
  \i\beta
  \end{array}\right)
\end{equation}
or
\begin{equation}\label{mat8}
A_{22}\left(
  \begin{array}
 {c}
  1\\
  \i\beta
  \end{array}\right)
  =
  \left(
  \begin{array}
 {c}
  1\\
  \i\beta
  \end{array}\right)
  \,\,{\text{ and }}\,\,
  A_{22}\left(
  \begin{array}
 {c}
  1\\
  -\i\beta
  \end{array}\right)
  =
  \left(
  \begin{array}
 {c}
  1\\
  -\i\beta
  \end{array}\right),
\end{equation}
according to whether $U$ interchanges \eqref{tl3} and \eqref{tl4} or not.
We note that as points on $\CP^1$
\begin{equation}\label{4pt}
(1,\i\alpha),\,\,(1,-\i\alpha),\,\,(1,\i\beta),\,\,(1,-\i\beta)
\end{equation}
are four distinct points.
Thus \eqref{mat5}--\eqref{mat8} mean that in any case the 
projective transformation determined by the matrix $A_{22}$ 
leaves the set of the 4 points \eqref{4pt} invariant.
If \eqref{mat6} and \eqref{mat8} happens, then $A_{22}$ fixes all $4$ 
points. This means $A_{22}=cI_2$ for some $c\in\CC^*$. 
If \eqref{mat5} and \eqref{mat7} happen, then $A_{22}$ interchanges 
$(1,\i\alpha)$ and $(1,-\i\alpha)$ and also $(1,\i\beta)$ and $(1,-\i\beta)$.
This means $A_{22}=c\,{\text{diag}}(1,-1)$. 
A simple computation also shows that there exists no projective 
transformation realizing the remaining two cases.
Moreover, since $U$ commutes with the real structure \eqref{rs1}, 
we readily obtain $c\in \mathbb R$. Thus we obtain the claim (ii)
in case (I).

If $U$ belongs to the case (II), by similar computation as above, 
we deduce that, as a projective transformation, $A_{22}$ maps 
$(1,\i\alpha)$ to either $(1,\i\beta)$ or $(1,-\i\beta)$ 
(so that $(1,-\i\alpha)$ is mapped to $(1,-\i\beta)$ or 
$(1,\i\beta)$ respectively).
 Further, $A_{22}$ maps $(1,\i\beta)$ to either 
$(1,\i\alpha)$ or $(1,-\i\alpha)$
 (so that $(1,-\i\beta)$ is mapped to $(1,-\i\alpha)$ or 
$(1,\i\alpha)$ respectively).
Among these $2\cdot2=4$ possibilities, only the two cases 
\begin{align*}
A_{22}:(1,\i\alpha)\mapsto (1,\i\beta)\,\,
{\text{ and }}\,\,(1,\i\beta)\mapsto (1,\i\alpha),
\end{align*} 
and 
\begin{align*}
A_{22}:(1,\i\alpha)\mapsto (1,-\i\beta)\,\,
{\text{ and }}\,\,(1,\i\beta)\mapsto (1,-\i\alpha)
\end{align*}
can actually occur, and for each these cases $A_{22}$ is represented by the matrices
\begin{equation}
A_{22}=
\left(
\begin{array}{cc}
     0&1   \\
     \alpha\beta&  0 \\
\end{array}
\right)
\,\,\,{\text{ and }}\,\,\,
\left(
\begin{array}{cc}
     0&-1   \\
     \alpha\beta&  0 \\
\end{array}
\right),
\end{equation}
respectively.
Finally, again by commutativity with \eqref{rs1} we obtain $c\in \i\mathbb R$.
This completes the proof of claim (iii).
\end{proof}

The next lemma determines all automorphisms of the projective model 
$\tilde{Z}$ which commute with the real structure.

\begin{lemma}\label{lemma:autproj}
 (i) Let $U$ be a $6 \times 6$ matrix in Case (I) of the form \eqref{U1}, 
where $A_{11}$ and $A_{33}$ are 
as in \eqref{ent31} and $A_{12}=A_{32}=O$ and $A_{22}$ is
as in \eqref{ent33} by Lemma \ref{lemma-zero}.
Then after normalizing by a scalar multiplication to make
$c=1$, $U$ preserves the projective model $\tilde{Z}$ if and only if 
the entries in \eqref{ent31} satisfy $|a|=|b|=1$.
(ii) Let $U$ be the $6 \times 6$ matrix in Case (II) of the form \eqref{U2}, 
where $A_{13}$ and $A_{31}$ are as in 
\eqref{ent32} and $A_{12}=A_{32}=O$ and $A_{22}$ is as in 
\eqref{ent34} by Lemma \ref{lemma-zero}.
Then after normalizing by a scalar multiplication to make
$c=\i$, $U$ preserves the projective model $\tilde{Z}$  
if and only if the entries in \eqref{ent32} satisfy
\label{l1}
\begin{align}
\label{newones}
|a|=\beta, \,\,|b| =\alpha \,\,\,({\text{and }}c=\i).
\end{align}
\end{lemma}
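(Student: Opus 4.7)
The plan is to exploit the fact that an invertible $U \in {\rm{GL}}(6,\CC)$ preserves the projective variety $\tilde{Z} = Q_\infty \cap Q_0$ precisely when the pullbacks $U^{*}Q_\infty$ and $U^{*}Q_0$ both lie in the linear span $\langle Q_\infty, Q_0\rangle$, because these two quadrics generate the degree-two part of the homogeneous ideal of $\tilde Z$. So I would compute these pullbacks explicitly in each block form and match coefficients against the ansatz $U^{*}Q_\infty = \mu_1 Q_\infty + \nu_1 Q_0$, $U^{*}Q_0 = \mu_2 Q_\infty + \nu_2 Q_0$, then extract conditions on $|a|$, $|b|$, and $c$.

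For Case (I), the block-diagonal structure of \eqref{U1} forces each of $w_0 w_1$, $z_2^2$, $z_3^2$, $w_4 w_5$ to pull back to a single scalar multiple of itself, namely $|a|^2 w_0 w_1$, $c^2 z_2^2$, $c^2 z_3^2$, $|b|^2 w_4 w_5$. Both subcases for $A_{11}$ (and likewise $A_{33}$) in \eqref{ent31} yield the same scalar on $w_0 w_1$ (resp.\ $w_4 w_5$) because of the conjugate structure, and both subcases of $A_{22}$ from \eqref{ent33} give $c^2$ on each $z_j^2$. Matching coefficients in $U^{*}Q_\infty = \mu_1 Q_\infty + \nu_1 Q_0$, equality on $z_2^2$ and $z_3^2$ forces $(\lambda - 3/2)\nu_1 = 0$, hence $\nu_1 = 0$ by Poon's constraint $\lambda \neq 3/2$; the remaining equations then collapse to $|a|^2 = |b|^2 = c^2$. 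The parallel system for $U^{*}Q_0$ gives the same conditions. After normalizing $c = 1$ (allowed since $c\in \RR^*$ and real rescaling of $U$ preserves the reality structure \eqref{rs1}), one obtains $|a| = |b| = 1$.

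For Case (II), the anti-block structure of \eqref{U2} interchanges the $(w_0, w_1)$ block with the $(w_4, w_5)$ block; with the sign patterns in \eqref{ent32}, I would compute $U^{*}(w_0 w_1) = -|a|^2 w_4 w_5$ and $U^{*}(w_4 w_5) = -|b|^2 w_0 w_1$, independently of the two subcases for $A_{13}$ and $A_{31}$. With $A_{22}$ of the form \eqref{ent34} one gets $U^{*}(z_2^2) = c^2 z_3^2$ and $U^{*}(z_3^2) = c^2 \alpha^2 \beta^2 z_2^2$, again independently of the sign choice. Imposing the two linear systems for $U^*Q_\infty$ and $U^*Q_0$, and using $\alpha^2 = 4-2\lambda$, $\beta^2 = 2\lambda - 2$, $\alpha^2 + \beta^2 = 2$, together with Poon's $3/2 < \lambda < 2$, I expect to extract exactly two conditions: the overall relation $c^2 = -(|a|^2 + |b|^2)/2$ (from the $z_3^2$-equation) and the ratio $|a|^2 (2-\lambda) = |b|^2 (\lambda - 1)$, equivalently $|a|\alpha = |b|\beta$ (from the $z_2^2$-equation). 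After normalizing $c = \i$ (allowed since $c \in \i\RR^*$), the first relation becomes $|a|^2 + |b|^2 = 2 = \alpha^2 + \beta^2$, which together with $|a|\alpha = |b|\beta$ uniquely forces $|a| = \beta$ and $|b| = \alpha$.

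The main obstacle I anticipate is the bookkeeping in Case (II), where one must verify that the various sign choices in $A_{13}, A_{31}$ and the two forms of $A_{22}$ all produce the same linear systems and impose no extra constraints. Fortunately this works out because each relevant pullback monomial depends only on $|a|^2$, $|b|^2$, and $c^2$, and the non-vanishing of both $\lambda - 3/2$ and $\alpha\beta$ (ensured by Poon's range $3/2 < \lambda < 2$) is precisely what makes the linear systems for $(\mu_i, \nu_i)$ uniquely solvable and thereby pins down the norms of $a$ and $b$.
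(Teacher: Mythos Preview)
Your proposal is correct and follows essentially the same route as the paper: both compute the pullbacks of the two defining quadrics under $U$ and match coefficients against a general element of the pencil $\langle Q_\infty, Q_0\rangle$. The only differences are organizational: the paper checks the reverse containment (that $h_0-h_\infty$ and $h_0-2h_\infty$ lie in $(Uh_0,Uh_\infty)$) and normalizes $|a|$ or $|b|$ first before deriving conditions on $c$, whereas you normalize $c$ directly (as in the lemma statement) and solve for $|a|$ and $|b|$; your version is slightly more streamlined but the substance is identical.
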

\begin{proof}
 We only show  (ii) since (i) can be proved by a similar (and simpler) 
computation. We recall that $\tilde{Z}$ is defined by the following $2$ quadratic 
polynomials:
\begin{align}
h_0 &= 2 w_0 w_1 + \lambda z_2^2 + \frac{3}{2} z_3^2 + w_4 w_5,\\
h_{\infty} &= w_0 w_1 + z_2^2 + z_3^2 + w_4 w_5.
\end{align}
We also recall $\alpha^2 = 4 - 2 \lambda,\, \beta^2 = 2 \lambda -2$. 
Let the constants $({a},{b},{c})$ be arbitrary satisfying  $c\in \i\mathbb R$.
Then by substitution, we obtain
\begin{align}
U h_0& = -2 |a|^2 w_4 w_5 + {c}^2 \lambda z_3^2 
      + (3/2) {c}^2 \alpha^2 \beta^2 z_2^2 - |{b}|^2 w_0 w_1,\\
U h_{\infty}& = -| {a}|^2 w_4 w_5 + {c}^2 z_3^2 
   + {c}^2 \alpha^2 \beta^2 z_2^2 - |{b}|^2 w_0 w_1. 
\end{align}
By multiplying a real constant to $U$, we may suppose $|b|=1$.
So we have constants $(a, b,c)$ with $|b|=1$ determining 
$U$ in Case (II).  
This gives, 
\begin{align}
U h_0 &= -2 |a|^2 w_4 w_5 + c^2 \lambda z_3^2 
      + (3/2) c^2 \alpha^2 \beta^2 z_2^2 - w_0 w_1,\\
U h_{\infty} &= -|a|^2 w_4 w_5 + c^2 z_3^2 
   + c^2 \alpha^2 \beta^2 z_2^2 - w_0 w_1. 
\end{align}
If $U$ preserves $\tilde{Z}$, then preserves the quadratic ideal $(h_0, h_{\infty})$, 
so there exist constants $c_1$ and $c_2$ so that
\begin{align}
\label{34}
c_1 U h_0 - c_2 U h_{\infty} 
= h_0 - h_{\infty} = w_0 w_1 + (\lambda-1) z_2^2 + (1/2) z_3^2.
\end{align}
A computation gives
\begin{align}
\begin{split}
\label{n}
c_1 U h_0 - c_2 U h_{\infty} &= -(c_1 - c_2) w_0 w_1 
    +|a|^2 (-2 c_1  + c_2 ) w_4 w_5  \\ 
&+ c^2 ( c_1 \lambda - c_2) z_3^2 
+ \left(\frac{3}{2}c_1 - c_2 \right) c^2 \alpha^2 \beta^2 z_2^2.
\end{split}
\end{align}
Comparing with \eqref{34}, we see that 
$c_2 - c_1 = 1$ and $|a|^2 ( -2 c_1 + c_2) = 0$.
Since $|a| \neq 0$,   we obtain 
\begin{align}
c_1 = 1, c_2 = 2. 
\end{align}
Then we have 
\begin{align}
U h_0 - 2 U h_{\infty} = w_0 w_1 + c^2(\lambda - 2) z_3^2 
                          - (1/2) c^2 \alpha^2 \beta^2 z_2^2.
\end{align}
Comparing coefficients with \eqref{34}, we have
\begin{align}
c^2(\lambda - 2) &= 1/2\label{345}\\
 - (1/2) c^2 \alpha^2 \beta^2 &= (\lambda - 1).\label{345-2}
\end{align}
By \eqref{345} we obtain $c^2=-\alpha^{-2}$.
Further, if this is satisfied, \eqref{345-2} automatically holds.
So we find that $h_0-h_{\infty}\in (Uh_0,Uh_{\infty})$ holds
if and only if after a rescaling the entries of $U$ satisfy 
$c=i$ and $|b|=\alpha$.

Next, by rescaling, we assume $|a|=1$.
We compute that 
\begin{align}
U h_0 &= -2 w_4 w_5 + c^2 \lambda z_3^2 + \frac{3}{2}c^2 \alpha^2
\beta^2 z_2^2 - |b|^2 w_0 w_1\\
U h_{\infty} &= - w_4 w_5 + c^2 z_3^2 + c^2 \alpha^2 \beta^2 z_2^2
- |b|^2 w_0 w_1
\end{align}
Consider the element 
\begin{align}
h_0 - 2 h_{\infty} = (\lambda -2) z_2^2 - \frac{1}{2} z_3^2 - w_4 w_5.
\end{align}
We next find $c_1$ and $c_2$ so that 
\begin{align}
c_1 U h_0 - c_2 U h_{\infty} =  (\lambda -2) z_2^2 - \frac{1}{2} z_3^2 - w_4 w_5.
\end{align}
We compute
\begin{align}
\begin{split}
c_1 U h_0 - c_2 U h_{\infty} &= (-2 c_1 + c_2) w_4 w_5 
+ c^2( c_1 \lambda - c_2) z_3^2 \\
&+ \left( \frac{3}{2} c_1 - c_2 \right)
c^2 \alpha^2 \beta^2 z_2^2 - |b|^2( c_1 - c_2) w_0 w_1. 
\end{split}
\end{align}
We find that 
\begin{align}
|b|^2 (c_1 - c_2) = 0, \   - 2 c_1 + c_2 = -1,
\end{align}
which implies that $c_1 = c_2 = 1$.  We then have
\begin{align}
\frac{1}{2} c^2 \alpha^2 \beta^2 = \lambda - 2, \ c^2( \lambda -1) = -\frac{1}{2}. 
\end{align}
The latter equation implies $c^2=-|\beta|^{-2}$, which implies the former
equation. So we find that $h_0-2h_{\infty}\in (Uh_0,Uh_{\infty})$ holds
if and only if after a rescaling the entries of $U$ satisfy 
$c=\i$ and $|a|=\beta$.

On the other hand, as  
$(h_0,h_{\infty})=(h_0-h_{\infty},h_0-2h_{\infty})$, 
and $(h_0,h_{\infty})=(Uh_0,Uh_{\infty})$ holds if and only if 
$h_0\in (Uh_0,Uh_{\infty})$ and $h_{\infty}\in (Uh_0,Uh_{\infty})$.
Hence by a combination of the above two, we conclude that 
$U$ preserves $\tilde{Z}$ if and only if $U$ can be rescaled to satisfy 
$c=\i$, $|a|=\beta$ and $|b|=\alpha$.
\end{proof}

\vspace{2mm}
According to Lemma \ref{lemma:autproj}, 
in Case (I), each of $A_{11}$, $A_{22}$ and $A_{33}$ has 2 types of choices, 
with $|a|=|b|=1$.
Hence the automorphisms in (i) of  Lemma \ref{lemma:autproj} 
constitute $2^3=8$ tori.
Similarly by Lemma \ref{lemma:autproj} (ii), the same is true for Case (II), 
so that we again obtain $8$ tori.
Thus we obtain $16$ tori in the holomorphic automorphism group of $\tilde{Z}$.
All automorphisms in these $16$ tori commute with the real structure.

\subsection{Determination of small resolutions}
\label{ss:detsr}
As in Proposition \ref{prop-poon10}, the projective model $\tilde Z$ of 
Poon's twistor spaces on $2\#\CP^2$ has precisely 4 ordinary nodes 
$P_1,\ol P_1,P_3$ and $\ol P_3$.
The actual twistor space $Z$ is obtained from $\tilde Z$ by taking small 
resolutions for each node.
Of course, there are exactly 2 ways of small resolutions for each node.
(We refer the reader to \cite[Section 12]{Kollar1987} for a 
discussion of the small resolutions of ordinary nodes
of threefolds.)
Since the resolution much preserve the real structure,  
the small resolutions of $P_1$ and $P_3$ uniquely determine those 
of $\ol P_1$ and $\ol P_3$ respectively, so there are exactly 
$4$ ways to obtain small resolutions of the variety $\tilde Z$ which 
preserve the real structure.
In this subsection we explicitly determine which small resolutions 
yield the twistor space.
This gives a completely explicit construction of the twistor spaces 
of Poon's metrics on $2\#\CP^2$, starting from his projective models in $\CP^5$.

For the purpose of specifying the small resolutions of ordinary nodes 
of $\tilde{Z}$, we first investigate local structure of $\tilde{Z}$ 
in  neighborhoods of the singularities.
First we take $P_1=(1,0,0,0,0,0)$ and  $\ol P_1=(0,1,0,0,0,0)$. 
By \eqref{tl1} and \eqref{tl2} the two irreducible components of 
the two reducible hyperplane sections $\tilde{Z}\cap H_{\alpha}$ and 
$\tilde{Z}\cap H_{-\alpha}$ contain $P_1$ and $\ol P_1$ as smooth points. 
Namely, the 4 surfaces 
\begin{align}
D'_1:=\{\alpha z_2-\i z_3=w_4=w_0w_1+(2\lambda-3)z_2^2=0\},\label{d1-1}\\
\ol D'_1:=\{\alpha z_2-\i z_3=w_5=w_0w_1+(2\lambda-3)z_2^2=0\},\label{d1-2}\\
D'_2:=\{\alpha z_2+\i z_3=w_4=w_0w_1+(2\lambda-3)z_2^2=0\},\label{d1-3}\\
\ol D'_2:=\{\alpha z_2+\i z_3=w_5=w_0w_1+(2\lambda-3)z_2^2=0\label{d1-4}\},
\end{align}
all of which are cones over a smooth conic,
share $P_1$ and $\ol P_1$ as smooth points.
Note that $\ol D'_1=\sigma(D'_1)$ and $\ol D'_2=\sigma(D'_2)$ hold, and 
that all of these 4 surfaces are $G$-invariant.
The configuration of these 4 surfaces and the ordinary nodes is 
illustrated in the diagram on the left in Figure \ref{fig-4faces}.
In a neighborhood of $P_1$, by setting $w_0=1$ in the defining 
equations in \eqref{Q12}  and eliminating $w_1$, we can think of 
$\tilde{Z}$ as defined in $\CC^4=\{(z_2,z_3,w_4,w_5)\}$ by the equation 
\begin{align}\label{X}
\alpha^2z_2^2+z_3^2+2w_4w_5=0,
\end{align}
from which one can see that $P_1$ is an ordinary node of $\tilde{Z}$.
Similarly, by neglecting the last common hyperquadric  in \eqref{d1-1}--\eqref{d1-4},
these 4 surfaces can be considered to be defined in the same $\CC^4$
(at least in a neighborhood of $P_1$).

By the equations \eqref{X} and \eqref{d1-1}--\eqref{d1-4} 
(with the last common quadratic equation neglected), a small 
resolution of $\tilde{Z}$ at $P_1$  is clearly specified by 
which pair among  $\{D'_1,\ol D'_2\}$ or $\{\ol D'_1,D'_2\}$ 
is blown-up at $P_1$.
By exchanging the role of $w_0$ and $w_1$ in the above argument, 
we see that a small resolution at the conjugate point $\ol P_1$ can 
also be specified by which pair of  $\{D'_1,\ol D'_2\}$ or 
$\{\ol D'_1,D'_2\}$ is blown-up at $\ol P_1$.

Similarly, by \eqref{tl3} and \eqref{tl4}, the other two reducible 
hyperplane sections $\tilde{Z}\cap H_{\beta}$ and $\tilde{Z}\cap H_{-\beta}$ 
contain $P_3$ and $\ol P_3$ as smooth points.
They consist of the four $G$-invariant surfaces
\begin{align}
D'_3:=\{\beta z_2-\i z_3=w_0=(3-2\lambda)z_2^2+w_4w_5=0\},\label{d1-7}\\
\ol D'_3:=\{\beta z_2-\i z_3=w_1=(3-2\lambda)z_2^2+w_4w_5=0\},\label{d1-8}\\
D'_4:=\{\beta z_2+\i z_3=w_0=(3-2\lambda)z_2^2+w_4w_5=0\},\label{d1-9}\\
\ol D'_4:=\{\beta z_2+\i z_3=w_1=(3-2\lambda)z_2^2+w_4w_5=0\label{d1-10}\}.
\end{align}
These are illustrated in the diagram on the right in Figure~\ref{fig-4faces}.
By the same reasons as for $P_1$ and $\ol P_1$, the small resolutions 
of $\tilde{Z}$ at $P_3$ and $\ol P_3$ are specified 
by which pair among $\{D'_3,\ol D'_4\}$ or $\{\ol D'_3,D'_4\}$ 
is blown-up at $P_3$ and $\ol P_3$ respectively.

\begin{figure}
\includegraphics{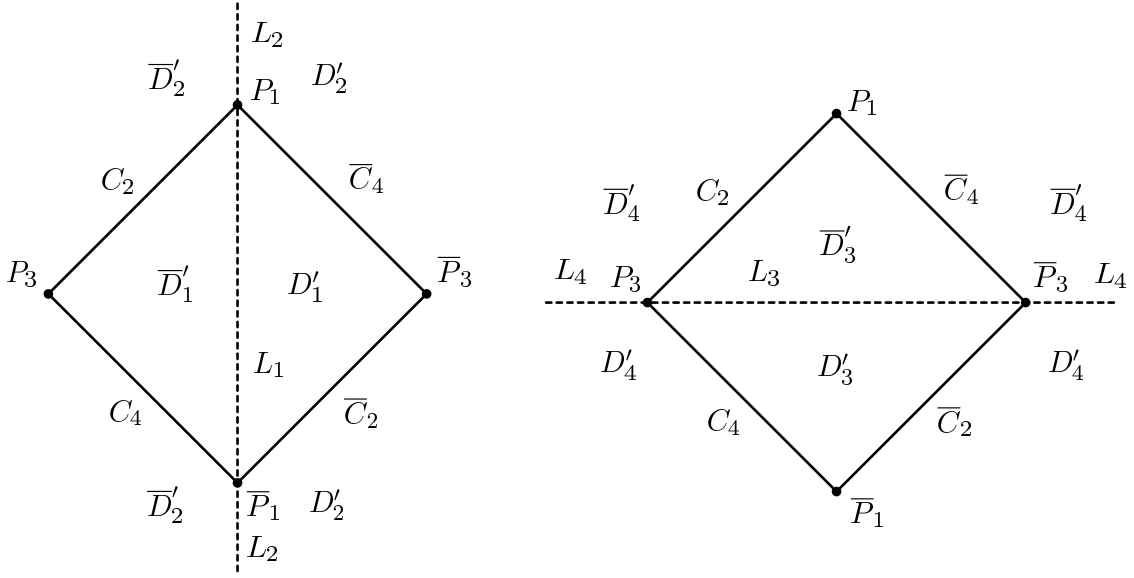}
\caption{The 8 cones meeting at singularities of $\tilde{Z}$.
The broken lines are the images of the four $G$-invariant twistor lines, 
which separate $D_j$ and $\ol{D}_j$ for $1\le j\le 4$. The 
rational curves $C_j, \overline{C}_j, j = 2, 4$ and $L_j, 1\le j\le 4$, 
are the intersection of the corresponding divisors.}
\label{fig-4faces}
\end{figure}

Hence any small resolution of $\tilde{Z}$ preserving the real 
structure falls into exactly one of the following:

\vspace{2mm}

\noindent
($\ast$) $\{D'_1,\ol D'_2\}$ and $\{ D'_3, \ol D'_4\}$ are 
blown-up pairs near $P_1$ and $P_3$, respectively, or 
$\{\ol D'_1,D'_2\}$ and $\{ \ol D'_3,  D'_4\}$ 
(the complementary pairs) are blown-up pairs near $P_1$ and $P_3$ 
respectively.
\vspace{2mm}

\noindent
$(\ast)'$ $\{D'_1,\ol D'_2\}$ and $\{\ol D'_3, D'_4\}$ are 
blown-up pairs near $P_1$ and $P_3$, respectively, or 
$\{\ol D'_1,D'_2\}$ and $\{ D'_3, \ol D'_4\}$ 
(the complementary pairs) are blown-up pairs near $P_1$ and $P_3$ 
respectively.

Obviously, each of these cases contain two ways of resolutions.
Consequently, for each case,  we obtain two (non-singular) 3-folds.
Next we see that these two spaces in each case are biholomorphic. 
For this, we define a new matrix $U_0$ by
\begin{align}
U_0:={\rm{diag}}(1,1,1,-1,1,1).
\end{align}
It is immediate to see (from \eqref{Q12}) that $U_0(\tilde Z)=\tilde Z$ holds.
We denote this involution on $\tilde Z$ by the same letter $U_0$.
Note that $U_0$ commutes with the real structure.

\begin{proposition}\label{prop:identify}
Let $\nu_1:Z_1\to \tilde Z$ and $\nu_2:Z_2\to \tilde Z$ be the two  
resolutions of $\tilde Z$ in the case $(\ast)$, and 
$\nu'_1:Z'_1\to \tilde Z$ and $\nu'_2:Z'_2\to \tilde Z$ be the two  
resolutions of $\tilde Z$ in the case $(\ast)'$.
Then the involution $U_0$ on $\tilde Z$ lifts as a biholomorphic map 
$Z_1\to Z_2$ and $Z'_1\to Z'_2$.
Furthermore, the last two biholomorphic maps commute with the real structure.
\end{proposition}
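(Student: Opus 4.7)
The plan is to reduce the claim to a direct inspection of how the involution $U_0$ permutes the eight cones $D'_j,\ol D'_j$ ($1\le j\le 4$) defined in \eqref{d1-1}--\eqref{d1-4} and \eqref{d1-7}--\eqref{d1-10}, and then invoke the standard correspondence between small resolutions of an ordinary threefold node and the choice of a Weil divisor through it.

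First I would compute the $U_0$-action on the eight cones. Since $U_0$ acts on $\CP^5$ by $(w_0,w_1,z_2,z_3,w_4,w_5)\mapsto(w_0,w_1,z_2,-z_3,w_4,w_5)$, substitution into the defining equations shows
\begin{align*}
U_0(D'_1)=D'_2,\quad U_0(\ol D'_1)=\ol D'_2,\quad U_0(D'_3)=D'_4,\quad U_0(\ol D'_3)=\ol D'_4,
\end{align*}
and hence the reverse identifications as well. In particular, $U_0$ sends the pair $\{D'_1,\ol D'_2\}$ (blown up near $P_1$ for $Z_1$ in case $(\ast)$) to $\{D'_2,\ol D'_1\}$ (blown up near $P_1$ for $Z_2$), and sends the pair $\{D'_3,\ol D'_4\}$ at $P_3$ to $\{D'_4,\ol D'_3\}$. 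The analogous statement holds in case $(\ast)'$: $U_0$ carries the defining pairs of $Z'_1$ to those of $Z'_2$.

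Next I would invoke the standard local picture at an ordinary threefold node $xy-zw=0$: the two small resolutions are distinguished by which of the two Weil divisors through the node is chosen to be blown up (equivalently, made Cartier). In our situation, each singular point of $\tilde Z$ lies on exactly two of the $G$-invariant cones, and the two small resolutions correspond precisely to the two complementary pairings described in cases $(\ast)$ and $(\ast)'$. Consequently, a biholomorphic involution of $\tilde Z$ that sends the selected pair of divisors defining $Z_1$ (resp.\ $Z'_1$) to the selected pair defining $Z_2$ (resp.\ $Z'_2$) lifts uniquely to a biholomorphism $Z_1\to Z_2$ (resp.\ $Z'_1\to Z'_2$). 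Applying this to the permutation pattern computed in the previous paragraph produces the two desired lifts.

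Finally, for compatibility with the real structure, I would verify by direct substitution in \eqref{rs1} that $\sigma\circ U_0 = U_0\circ\sigma$ on $\CP^5$; the sign change in the fourth coordinate under $U_0$ is compensated by the sign in $-\ol z_3$ in $\sigma$. Since the real structures on $Z_1,Z_2,Z'_1,Z'_2$ are the unique lifts of $\sigma$ to the respective small resolutions, and since $U_0$ carries the defining divisors of $Z_1$ (resp.\ $Z'_1$) to those of $Z_2$ (resp.\ $Z'_2$) in a way that intertwines with $\sigma$ on the base, the uniqueness of lifts forces the biholomorphisms $Z_1\to Z_2$ and $Z'_1\to Z'_2$ to commute with the respective real structures. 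The only genuinely delicate point in this argument is the appeal to uniqueness of the lift at each node, which I would justify by a local coordinate computation at the singular points $P_1,\ol P_1, P_3,\ol P_3$, using \eqref{X} as a local model.
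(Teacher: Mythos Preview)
Your proposal is correct and follows essentially the same approach as the paper: compute that $U_0$ interchanges $D'_1\leftrightarrow D'_2$ and $D'_3\leftrightarrow D'_4$ (hence swaps the blow-up pairs of the two resolutions in each case), and then observe that $U_0$ commutes with $\sigma$. One small slip: you write that ``each singular point of $\tilde Z$ lies on exactly two of the $G$-invariant cones,'' but in fact each of $P_1,\ol P_1$ lies on all four of $D'_1,\ol D'_1,D'_2,\ol D'_2$ (and likewise for $P_3,\ol P_3$); this does not affect your argument, since the relevant data at each node is the partition of these four divisors into two pairs, which you track correctly.
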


\begin{proof}
It suffices to verify that $U_0$ maps the blow-up pairs to the 
(complementary) blow-up pairs.
By elementary computations, we have $U_0(D'_1)=D'_2, U_0(D'_3)=D'_4$.
This immediately implies the former claim of the proposition.
The latter claim follows from the commutativity of $U_0$ with the real structure.
\end{proof}

By Proposition \ref{prop:identify}, we can identify $Z_1$ and $Z_2$, 
and also $Z'_1$ and $Z'_2$.
Next we show that the latter two spaces  are {\em not}\,  twistor spaces:

\begin{proposition}\label{prop:sr100}
Let $Z'_1$ and $Z'_2$ be as above and $\sigma_1'$ and $\sigma_2'$ the real 
structure induced by that on $\tilde Z$.
Then $(Z'_1,\sigma'_1)$ and $(Z'_2,\sigma'_2)$ are not twistor spaces.
\end{proposition}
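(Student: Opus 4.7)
The plan is to argue by contradiction. Suppose $(Z'_i,\sigma'_i)$ is a twistor space of some self-dual conformal class. Since $\nu'_i:Z'_i\to\tilde Z$ is a small resolution, the canonical square root $F'$ of $-K_{Z'_i}$ still generates a five-dimensional base-point-free complete linear system whose associated bimeromorphic image is $\tilde Z$. Applying the $\CC^*$-quotient argument as in Section \ref{twistor} (the analogue of Proposition \ref{prop-LB10} and Lemma \ref{lemma-inv7}) shows that the minitwistor space of the induced Einstein-Weyl 3-manifold (obtained by Jones--Tod reduction via either the $K_1$- or $K_3$-factor of $K$) is the same smooth quadric $\CP^1\times\CP^1$ with the same discriminant conics as for Poon's twistor space $Z_1$ (case $(\ast)$). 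Hence the hypothetical self-dual conformal class would necessarily be Poon's conformal class on $2\#\CP^2$.

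Since the twistor space of a self-dual conformal class is determined uniquely as a complex manifold with real structure, such an identification yields a real biholomorphism $\varphi:Z'_i\to Z_1$. Because $|F|$ is intrinsic to the complex structure, $\varphi$ sends the sections of $F'$ to sections of $F$ and therefore descends to a real automorphism $\bar\varphi\in{\rm{Aut}}(\tilde Z,\sigma)$. The map $\bar\varphi$ carries the case-$(\ast)$ choice of blown-up pair at $P_3$, namely $\{D'_3,\bar D'_4\}$, to the case-$(\ast)'$ pair $\{\bar D'_3,D'_4\}$, while fixing (as an unordered pair) the common choice $\{D'_1,\bar D'_2\}$ at $P_1$ (and their conjugates at $\bar P_1,\bar P_3$).

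The main step is to show that no element of ${\rm{Aut}}(\tilde Z,\sigma)$ realizes this case-switching, using the complete classification of Lemma \ref{lemma:autproj}. By Lemma \ref{lemma-inv7}, any real automorphism preserves the set of four invariant conics $\{\eqref{tl1},\eqref{tl2},\eqref{tl3},\eqref{tl4}\}$, and the partition of these into pairs $\{\Psi(L_1),\Psi(L_2)\}$ at the $P_1$-side and $\{\Psi(L_3),\Psi(L_4)\}$ at the $P_3$-side corresponds precisely to the two rulings at each node. The eight tori in Case (I) of Lemma \ref{lemma:autproj} preserve $\{P_3,\bar P_3\}$ and, via the explicit form of $A_{33}$ and the normalization $|b|=1$, act on the pair $\{D'_3,\bar D'_4\}$ either as the identity or swapping its members, always mapping it to itself; similarly for the complementary pair. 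The eight tori in Case (II) swap $\{P_1,\bar P_1\}$ with $\{P_3,\bar P_3\}$, but by the constraint $|a|=\beta$, $|b|=\alpha$ together with the explicit form of $A_{22}$ in Lemma \ref{lemma-zero}(iii), the image of the pair $\{D'_3,\bar D'_4\}$ (defined by $\beta z_2\mp \i z_3=0$) is the pair $\{D'_1,\bar D'_2\}$ at $P_1$ (defined by $\alpha z_2\mp \i z_3=0$) and not the $(\ast)'$ pair. Combining these observations, no automorphism in ${\rm{Aut}}(\tilde Z,\sigma)$ achieves the required case-switching, giving the desired contradiction.

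The step I expect to be the main obstacle is the combinatorial verification on the sixteen tori. In particular one must carefully track, using the explicit matrix forms of $A_{11},A_{22},A_{33}$ (and $A_{13},A_{31}$ in Case (II)), the action on the defining linear forms $\alpha z_2\mp \i z_3$ and $\beta z_2\mp \i z_3$ that distinguish the two ruling pairs at each node, and check that none of the resulting permutations interchanges $\{D'_3,\bar D'_4\}$ with $\{\bar D'_3,D'_4\}$ while preserving $\{D'_1,\bar D'_2\}$. This is a finite case check but requires careful book-keeping of the signs and scalings.
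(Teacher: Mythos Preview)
Your argument has a genuine logical gap: it is circular. You write ``such an identification yields a real biholomorphism $\varphi:Z'_i\to Z_1$,'' implicitly using that $Z_1$ is (real-biholomorphic to) Poon's twistor space $Z$. But at this point in the paper that has not been established; indeed, Proposition~\ref{prop:sr100} is precisely the statement that singles out the resolutions in case~$(\ast)$ as the twistor space, by eliminating case~$(\ast)'$. All we know from Proposition~\ref{prop-poon10} is that $Z$ is \emph{some} real small resolution of $\tilde Z$, hence one of $Z_1,Z_2,Z'_1,Z'_2$. Your sixteen-tori check, if carried out, would show that no element of ${\rm Aut}^{\sigma}(\tilde Z)$ carries the $(\ast)$ blow-up data to the $(\ast)'$ blow-up data, and hence that $Z_1$ and $Z'_1$ are not real-biholomorphic. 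Combined with your minitwistor argument, this yields only that \emph{at most one} of the two biholomorphism classes $\{Z_1,Z_2\}$, $\{Z'_1,Z'_2\}$ can be a twistor space---it does not tell you which one. The situation is symmetric from the point of view of the automorphism group of $\tilde Z$ alone.

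The paper's proof breaks this symmetry by invoking an external geometric constraint coming from Fujiki's description of Joyce metrics: in a genuine twistor space the four $G$-invariant twistor lines $L_j$, via the $K$-quotient map to $\overline{\mathcal H^2}$, must land on $\partial\mathcal H^2$ in the cyclic order dictated by the adjacency of the $K$-invariant spheres in $2\#\CP^2$. For the resolutions in case~$(\ast)'$ one computes (from the left octagon in Figure~\ref{fig:octagon}) that the induced cyclic order of the values $z(L_j)\in\{\pm i\alpha,\pm i\beta\}$ on $\partial\mathcal H^2$ would be $-\alpha,\beta,-\beta,\alpha$, which is impossible on the real line for $\alpha,\beta>0$. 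This is the missing ingredient: a criterion, independent of the automorphism group of $\tilde Z$, that a twistor space must satisfy and that case~$(\ast)'$ violates.
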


\begin{proof}
By Proposition \ref{prop:identify},
it suffices to show the claim for $(Z'_1,\sigma'_1)$.
In $\CP^5$ we define  
\begin{align}
\begin{split}
C_2:=\{z_2=z_3=w_1=w_5=0\},\,\,
\ol C_2:=\{z_2=z_3=w_0=w_4=0\},\\
C_4:=\{z_2=z_3=w_0=w_5=0\},\,\,
\ol C_4:=\{z_2=z_3=w_1=w_4=0\}.\\
\end{split}
\end{align}
It is immediate to see that these are  $G$-invariant non-singular 
rational curves in $\tilde Z$.
Moreover, each of these 4 curves goes through exactly two singular 
points of $\tilde Z$ (see Figure \ref{fig-4faces}).
We further define
\begin{align}\label{L_j}
L_j:=D'_j\cap \ol D'_j,\,\,\,1\le j\le 4,
\end{align}
recalling from above that these are precisely the images of the $G$-invariant 
twistor lines. 
Suppose that $Z'_1$ is a twistor space.
Then by Lemma \ref{lemma-inv7}, these are the images of the  
four $G$-invariant twistor lines (under $\Psi$).
We use the same letters to mean the strict transforms into 
$Z'_1$ of these curves.
Further, let $C_1,\ol C_1,C_3,\ol C_3$ be the exceptional curves 
of the small resolution $Z'_1\to Z$.
Then in the small resolution $Z'_1$, the 8 curves 
$C_1,C_2,C_3,C_4,\ol C_1,\ol C_2,\ol C_3$ and $\ol C_4$ form an `octagon'.
(This is true for any small resolution of $\tilde Z$.)
Further, under the present choice of the small resolution, 
the curves $L_j$ in $\tilde Z'_1$ can be seen to be configured as in the 
left diagram in Figure \ref{fig:octagon}.

We make a short remark on how Figure \ref{fig:octagon} is 
obtained. For example, consider the the first small resolution in $(\ast)'$.
Then the blow-up pair at $P_1$ is $\{D'_1,\ol D'_2\}$.
This means that by the effect of the resolution, $L_1$ and 
$\ol C_4$ are separated by the exceptional curve $C_1$ since $D'_1$ 
is blown-up at $P_1\,(=L_1\cap \ol C_4)$.
At the same time, $C_2$ and $L_2$ are separated by $C_1$ since $\ol D'_2$ 
is blown-up at $P_1\,(=C_2\cap L_2$). As a result, near $C_1$ 
the situation becomes as in the left diagram in Figure \ref{fig:octagon}.
Similar reasoning applies to all other edges of the octagon. 

\begin{figure}
\includegraphics{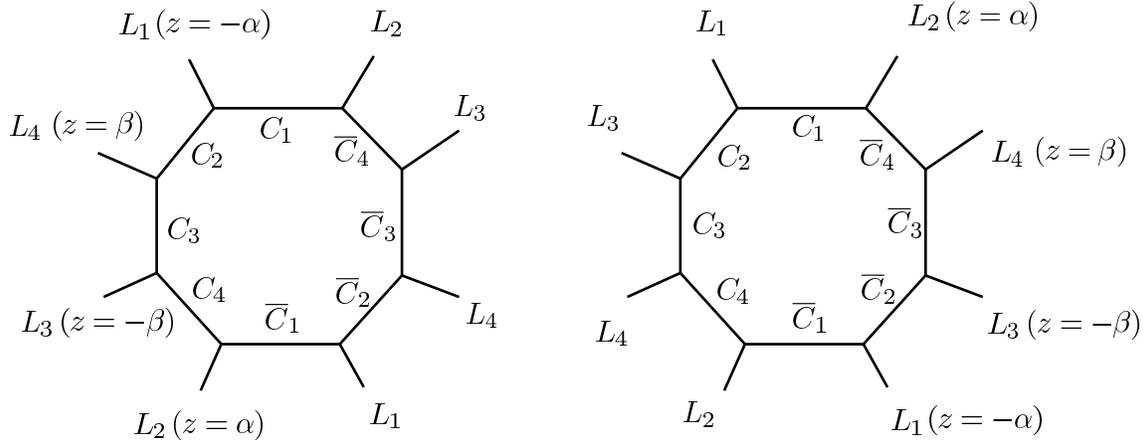}
\caption{Octagons formed by the 8 torus-invariant rational curves and the 
configuration of torus-invariant twistor lines. The left figure is 
for one of the two incorrect small resolutions, and the right
figure is for one of the two correct small resolutions.}
\label{fig:octagon}
\end{figure}

Next, we let $z:=z_3/z_2$ (where $z_2, z_3$ are part of the homogeneous 
coordinates of $\mathbb{CP}^5$) and consider it as a non-homogeneous 
coordinate on $\mathbb{CP}^1=\{(z_2,z_3)\}$.
Then by \eqref{rs1} the real structure on the last $\mathbb{CP}^1$ 
is given by $z\mapsto -\ol z$, so that the real locus is given by 
$\{z\in\mathbb C\set z\in i\mathbb R\}$.
Moreover by the definition of $L_j$, we have
\begin{align}
\begin{split}
z=-i\alpha {\text{\, on $L_1$}},\,\,
z=i\alpha {\text{\, on $L_2$}},\\
z=-i\beta {\text{\, on $L_3$}},\,\,
z=i\beta {\text{\, on $L_4$}}.\\
\end{split}
\end{align}
These mean that under the (meromorphic) $G$-quotient map 
$Z'\to \mathbb{CP}^1$ which is induced by the projection 
$(w_0,w_1,z_2,z_3,w_4,w_5)\mapsto (z_2,z_3)$, each $L_j$ 
is mapped to the point 
\begin{align}
\begin{split}
\label{4pts}
&z=-i\alpha \mbox{ for } j=1, \ z=i\alpha 
\mbox{ for } j=2, \\
&z=-i\beta \mbox{ for } j=3, \ z=i\beta \mbox{ for } j=4.
\end{split}
\end{align}
As Poon's metric is a special form of a Joyce metric,
we will next apply the theorem of Fujiki \cite[Theorem 9.1,~1)]{Fujiki2000},
which identifies the $(n+2)$ real parameters involved in the 
construction of Joyce metrics on $n\#\CP^2$ and the twistorial 
invariant that specifies the positions of the reducible members 
in the pencil $|F|^K$ (which in our case are 
$D_j+\ol D_j$, $1\le j\le 4$). Consequently, the four points in 
\eqref{4pts} can be canonically 
regarded as points on the boundary $\partial\mathcal H^2$ (where the 
Joyce metric is constructed 
on $K\times\mathcal H^2$). Furthermore, since the twistor fibration 
map $Z\to 2 \# \mathbb{CP}^2$ is $K$-equivariant, we have the diagram
\begin{equation}
\label{cd2}
 \CD
Z@<<<\left(
\cup_{i=1}^4C_i\right)
\cup 
\left(\cup_{i=1}^4\ol C_i\right) \\
@VVV @VV{/\langle \sigma_1'\rangle}V \\
2\#\CP^2@<<<
4\,\, K{\text{-invariant 2-spheres}}
\\
@VV{/K}V @VV{/K}V \\
\mathcal H^2\cup\partial \mathcal H^2
@<<< \partial \mathcal H^2
 \endCD
 \end{equation}
where all horizontal arrows mean the obvious inclusions as  subsets.
In particular, we have an isomorphism
\begin{align}
\left(
(\cup_{i=1}^4C_i)
\cup 
(\cup_{i=1}^4\ol C_i)
\right) / \langle K,\sigma'_1\rangle
\simeq
\partial\mathcal H^2,
\end{align}
where $\langle K,\sigma'_1\rangle$ means the automorphism group of 
$Z'$ generated by $K$ and $\sigma'_1$.
Therefore, looking the left diagram of Figure \ref{fig:octagon}, we see that the image of 
the four $K$-fixed points of the $K$-action on $2\mathbb{CP}^2$ under the quotient map
\begin{align*}
2\#\CP^2\to (2\#\CP^2)/K \simeq \mathcal H^2\cup\partial\mathcal H^2 
\end{align*}
are configured along $\partial \mathcal H^2$ in the order 
\begin{align}
-\alpha,\,\,\beta,\,\,-\beta,\,\,\alpha.
\end{align}
But as $\alpha>0$ and $\beta>0$, the 4 numbers cannot be configured 
in this order, even up to cyclic permutation and reversing the 
orientation.
Therefore, the $L_j$-s cannot be configured as in the left diagram in 
Figure \ref{fig:octagon}.
This means that the small resolutions in $(\ast)'$ are 
not the twistor space, as claimed.
\end{proof}

Thus we have obtained the small resolutions of the projective variety 
$\tilde Z$ which give the twistor space in completely explicit form.
Namely, such  small resolutions are exactly the two ones in $(\ast)$.
We remark that for the former among the two correct small resolutions, 
the torus-invariant twistor lines are configured as in the right diagram 
in Figure \ref{fig:octagon}; the latter case becomes the mirror 
image of this.

\subsection{Determination of the conformal isometry group (for $2\#\CP^2$)}
\label{detconf}

In this subsection we determine,
among the automorphisms in Lemma \ref{lemma:autproj} (parameterized by 16 tori),
which automorphisms actually lift to the twistor space. 
(Note that in general automorphisms of the base do not 
necessarily lift to a small resolution.)
We begin with Case (I).
\begin{proposition}
\label{prop-key}
 Let $U$ be the $6\times6$ matrix of the form
\begin{eqnarray}\label{U1'}
U=
\left(
\begin{array}{ccc}
A_{11}&O &O \\
O&A_{22} &O \\
O& O& A_{33}\\
\end{array} 
\right),
\end{eqnarray}
where
\begin{equation}
\label{ent33'}
A_{22}=
\left(
\begin{array}{cc}
     1&0   \\
     0&  1 \\
\end{array}
\right)
\,\,\,{\text{ or }}\,\,\,
\left(
\begin{array}{cc}
     1&0   \\
     0&  -1 \\
\end{array}
\right),
\end{equation}
 and
\begin{align}
\label{ent31''}
A_{11}=\left( 
\begin{array}{cc}
a &0 \\
0 &\ol a \\
\end{array} 
\right){\text{ or }}
\left( 
\begin{array}{cc}
0 &a \\
\ol a &0 \\
\end{array} 
\right),\,\,
A_{33}=\left( 
\begin{array}{cc}
b &0 \\
0 &\ol b \\
\end{array} 
\right){\text{ or }}
\left( 
\begin{array}{cc}
0 &b \\
\ol b &0 \\
\end{array} 
\right),\,\,|a|=|b|=1.
\end{align}
(These are necessary conditions obtained in Lemmas \ref{lemma-zero} 
and \ref{lemma:autproj}.)
Then $U$ lifts to the twistor space if and only if  
$A_{22}$, $A_{11}$ and $A_{33}$ take the following combinations:
\begin{itemize}
\item $A_{22}=I_2$ and $A_{11}, A_{33}$ are diagonal,
\item $A_{22}=I_2$ and $A_{11}, A_{33}$ are off-diagonal,
\item  $A_{22}={\rm{diag}}(1,-1)$, $A_{11}$ is diagonal and $ A_{33}$ is off-diagonal, 
\item  $A_{22}={\rm{diag}}(1,-1)$, $A_{11}$ is off-diagonal and $ A_{33}$ is diagonal.
\end{itemize}
\end{proposition}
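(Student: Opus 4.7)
The plan is to characterize the lifting directly in terms of how $U$ acts on the blow-up data of the small resolution $\Psi\colon Z\to\tilde Z$. By Proposition~\ref{prop:sr100}, I may identify $Z$ with one of the two resolutions in case $(\ast)$, say $Z = Z_1$, whose blow-up pairs are $\pi_1 := \{D'_1,\ol D'_2\}$ at $P_1$, $\ol\pi_1 := \{\ol D'_1, D'_2\}$ at $\ol P_1$, $\pi_3 := \{D'_3, \ol D'_4\}$ at $P_3$, and $\ol\pi_3 := \{\ol D'_3, D'_4\}$ at $\ol P_3$. The basic fact I would use about small resolutions of ordinary nodes is that $U\in{\rm{Aut}}(\tilde Z)$ lifts to a biholomorphism of $Z_1$ if and only if, at each of the four nodes $p$, $U$ sends the blow-up pair at $p$ to the blow-up pair at $U(p)$.

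First I would determine how $U$ permutes the four nodes. Since $P_1=(1,0,0,0,0,0)$ and $\ol P_1=(0,1,0,0,0,0)$ (and analogously for $P_3,\ol P_3$), the block form of $U$ in \eqref{U1'} yields immediately that $A_{11}$ diagonal fixes $P_1$ and $\ol P_1$ individually, while $A_{11}$ off-diagonal swaps them; likewise $A_{33}$ controls the action on $\{P_3,\ol P_3\}$. Second, I would compute the action of $U$ on the eight divisors. Introducing the linear forms $u=\alpha z_2-\i z_3$, $v=\alpha z_2+\i z_3$, $u'=\beta z_2-\i z_3$, $v'=\beta z_2+\i z_3$, each of $u,v,u',v'$ is preserved by $A_{22}=I_2$, whereas $A_{22}={\rm{diag}}(1,-1)$ swaps $u\leftrightarrow v$ and $u'\leftrightarrow v'$. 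Combined with the fact that $A_{33}$ diagonal preserves each of $\{w_4=0\}$ and $\{w_5=0\}$ while $A_{33}$ off-diagonal swaps them (and similarly $A_{11}$ on $\{w_0=0\},\{w_1=0\}$), the action of $U$ on each of the eight divisors $D'_j,\ol D'_j$ ($1\le j\le 4$) is read off directly from the defining equations \eqref{d1-1}--\eqref{d1-4} and \eqref{d1-7}--\eqref{d1-10}.

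Write $p,q,r\in\{0,1\}$ for the ``off-diagonalness'' parities of $A_{11},A_{22},A_{33}$, respectively (so $q=0$ means $A_{22}=I_2$ and $q=1$ means $A_{22}={\rm{diag}}(1,-1)$). A short case check in the four combinations $(q,r)\in\{0,1\}^2$ gives
\begin{align*}
U(\pi_1)=\pi_1 \text{ if } q+r\equiv 0 \pmod 2, \quad U(\pi_1)=\ol\pi_1 \text{ if } q+r\equiv 1 \pmod 2,
\end{align*}
and analogously $U(\pi_3)$ equals $\pi_3$ or $\ol\pi_3$ according to the parity of $p+q$. Now the lifting criterion at $P_1$ requires $U(\pi_1)$ to be the pair at $U(P_1)$ in $Z_1$: this is $\pi_1$ if $p=0$ and $\ol\pi_1$ if $p=1$, so the constraint becomes $p\equiv q+r \pmod 2$, i.e.\ $p+q+r\equiv 0 \pmod 2$. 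The identical computation at $P_3$ reproduces the same constraint, and the compatibilities at $\ol P_1,\ol P_3$ follow automatically by commutation with the real structure. Enumerating the triples $(p,q,r)\in\{0,1\}^3$ with $p+q+r$ even yields exactly the four combinations listed in the proposition. The main bookkeeping challenge is to keep track of the interplay between the node permutation and the labeling of blow-up pairs at the image node; once that dictionary is in place, the verification is mechanical.
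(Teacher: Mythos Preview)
Your proof is correct and uses essentially the same approach as the paper: both rely on the criterion that $U$ lifts to the small resolution if and only if it carries the blow-up pair at each node to the blow-up pair at the image node, and both verify this by computing how $U$ acts on the eight divisors $D'_j,\ol D'_j$. The only difference is organizational---where the paper checks the eight combinations $(A_{11},A_{22},A_{33})$ one at a time, you encode them via the parities $(p,q,r)$ and reduce the criterion to the single congruence $p+q+r\equiv 0\pmod 2$, which is a tidier bookkeeping of the same computation.
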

\begin{remark}{\em
This proposition means that the natural injective homomorphism 
\begin{equation}
{\rm{Aut}}^{\sigma}Z\to{\rm{Aut}}^{\sigma}\tilde{Z}
\end{equation}
is not surjective.
Namely, even if we restrict to the real resolutions, the projective models 
can have strictly larger symmetries than that of the twistor space.
}
\end{remark}
\begin{proof}[Proof (of Proposition \,\ref{prop-key})]
We determine whether the projective transformation $U$ lifts to a small 
resolution, by using the obvious fact that   an automorphism $U$ of $\tilde Z$ lifts to a small resolution $Z$ if and only if 
$U$ maps blow-up pairs at any ordinary nodes of $\tilde Z$  (in the sense of Section \ref{ss:detsr}; see $(\ast)$)  to a blow-up pair.
More concretely:

1)
If $U$ fixes $P_j$ ($j=1$ or $3$), then $U$ can be lifted to a small 
resolution of $\tilde{Z}$ at $P_j$ if and only if $U$ preserves {\em each} pair of
divisors. (If $j=1$, this means $\{U(D_1),U(\ol D_2)\}=\{D_1,\ol D_2\}$;
if $\{U(D_1),U(\ol D_2)\}=\{\ol D_1, D_2\}$, $U$ does not lift on 
any small resolutions.
If $j=3$, this means $\{U(D_3),U(\ol D_4)\}=\{D_3,\ol D_4\}$;
if $\{U(D_3),U(\ol D_4)\}=\{\ol D_3, D_4\}$, $U$ does not lift on 
any small resolutions.)
In these cases, $U$ can also be lifted to any small resolution 
(of $P_j$) automatically.

2)
If $U(P_1)=\ol P_1$, then $U$ can be lifted to small resolutions of 
$\tilde{Z}$ at $P_1$ and $\ol P_1$ which preserve the real structure if and only if 
$\{U(D_1),U(\ol D_2)\}=\{\ol D_1,D_2\}$.
Similarly, if $U(P_3)=\ol P_3$, $ U$ can be lifted to small 
resolutions at $P_3$ and $\ol P_3$ which preserves the real 
structure if and only if $\{U(D_3),U(\ol D_4)\}=\{\ol D_3,D_4\}$.

\vspace{1mm}
First we examine $U$ of \eqref{U1'} in the case where
 $A_{22}=I_2$ and $A_{11}, A_{33}$ are diagonal.
These $U$ fix all four singularities of $\tilde{Z}$ and 
leave any $D_j$ and $\ol D_j$ ($1\le j\le 4$) invariant. 
Hence by 1) above, we conclude that such $U$ lift to any small resolution of $\tilde{Z}$.
In particular, $U$ lifts to an automorphism of the twistor space $Z$.
Since these $U$ include the identity matrix, they form the identity component 
of the automorphism group. 

Next, if $A_{22}={\rm{diag}}(1,-1)$, and $A_{11}, A_{33}$ are diagonal,
then $U(P_1)=P_1$ and $ U(D_1)=D_2$ hold.
Hence by 1), these $U$ do not lift to any small resolution.
If
$A_{22}=I_2$ and $A_{11}$ is diagonal, and $A_{33}$ is off-diagonal, 
then
$U(P_1)=P_1$ and $U(D_1)=\ol D_1$ hold. 
Hence by 1), these $U$ do not lift to any small resolution.
If  $A_{22}={\rm{diag}}(1,-1)$, and $A_{11}$ is diagonal and $A_{33}$ 
is off-diagonal, then
$U(P_1)=P_1$ and $U(D_1)=\ol D_2$.
Hence by 1), these $U$ lift to any small resolution at $P_1$ and $\ol P_1$.
Further since $U(P_3)=\ol P_3$ and $U(D_3)=D_4$, by 2) this time, 
we conclude that these $U$ lift to any small resolution 
at $P_3$ and $\ol P_3$ as long as they preserve the real structure.
Hence these $U$ lift to an automorphism of the twistor space $Z$.
If $A_{22}=I_2$, $A_{11}$ is off-diagonal and $A_{33}$ is diagonal, 
then we have $U(P_3)=P_3$ and $U(D_3)=\ol D_3$. Hence by 1), 
these $U$ do not lift to $Z$.
If $A_{22}={\rm{diag}}(1,-1)$, $A_{11}$ is off-diagonal and $A_{33}$ is diagonal,
then we have  $U(P_1)=\ol P_1$, $U(D_1)=D_2$, $U(P_3)=P_3$ and $U(D_3)=\ol D_4$.
Hence by 2) and 1), these $U$ do lift to the twistor space $Z$.
If $A_{22}=I_2$ and $A_{11}$ and $A_{33}$ are off-diagonal, 
then we have $U(P_1)=\ol P_1$, $U(D_1)=\ol D_1$, $U(P_3)=\ol P_3$, 
and $U(D_3)=\ol D_3$. Hence by 2), these $U$ do lift to the twistor space $Z$.
Finally, if $A_{22}={\rm{diag}}(1,-1)$ and $A_{11}$ and $A_{33}$ are off-diagonal, 
then we have $U(P_1)=\ol P_1$, $U(D_1)=\ol D_2$. 
Hence by 2), these $U$ do not lift to $Z$.
This completes the proof of Proposition \ref{prop-key}.
\end{proof}

Next we consider Case (II).
In order to simplify notation, we put
\begin{align}
A_{22}^+=\i
\begin{pmatrix}
0 & 1\\
\alpha\beta& 0\\
\end{pmatrix},\
A_{22}^-=\i
\begin{pmatrix}
0 & -1\\
\alpha\beta& 0\\
\end{pmatrix}.
\end{align}
\begin{proposition}
\label{prop-key2}
 Let $U$ be a \,$6\times6$ matrix of the form
\begin{eqnarray}\label{U2'}
U=
\left(
\begin{array}{ccc}
O&O &A_{13} \\
O&A_{22} &O \\
A_{31}& O& O\\
\end{array} 
\right),
\end{eqnarray}
where
$A_{22}=A_{22}^+$ or $A_{22}=A_{22}^-$ 
 and
\begin{align}
\label{ent31'''}
A_{13}=\left( 
\begin{array}{cc}
a &0 \\
0 &-\ol a \\
\end{array} 
\right){\text{ or }}
\left( 
\begin{array}{cc}
0 &a \\
-\ol a &0 \\
\end{array} 
\right),\,
A_{31}=\left( 
\begin{array}{cc}
b &0 \\
0 &-\ol b \\
\end{array} 
\right){\text{ or }}
\left( 
\begin{array}{cc}
0 &b \\
-\ol b &0 \\
\end{array} 
\right),
\begin{array}{l}
|a|=\beta \\
|b|=\alpha.  \\
\end{array} 
\end{align}
(These are necessary conditions obtained in Lemmas 
\ref{lemma-zero} and \ref{lemma:autproj}.)
Then $U$ lifts to the twistor space if and only if  
$A_{22}$, $A_{13}$ and $A_{31}$ take the following combinations:
\begin{itemize}
\item $A_{22}=A_{22}^-$ and $A_{13}, A_{31}$ are diagonal,
\item $A_{22}=A_{22}^-$ and $A_{13}, A_{31}$ are off-diagonal,
\item  $A_{22}=A_{22}^+$, $A_{13}$ is diagonal and $ A_{31}$ is off-diagonal, 
\item  $A_{22}=A_{22}^+$, $A_{13}$ is off-diagonal and $ A_{31}$ 
is diagonal.
\end{itemize}
\end{proposition}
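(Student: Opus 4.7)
The plan is to parallel the proof of Proposition~\ref{prop-key}, invoking its lifting criterion: an automorphism of $\tilde Z$ lifts to the real small resolution specified in $(\ast)$ precisely when it sends each blow-up pair at a node to the blow-up pair at the image node. Since every $U$ of the form \eqref{U2'} belongs to Case~(II) (it swaps $\{P_1,\ol P_1\}$ with $\{P_3,\ol P_3\}$), only case~2) of that criterion is relevant: lifting at $P_1,\ol P_1$ requires $\{U(D'_1),U(\ol D'_2)\}=\{D'_3,\ol D'_4\}$ when $U(P_1)=P_3$, or $\{\ol D'_3,D'_4\}$ when $U(P_1)=\ol P_3$, with the analogous condition at $P_3,\ol P_3$. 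By the reality of $U$, verifying these at $P_1$ and $P_3$ suffices.

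The first step is to read off the action of $U$ on the four nodes from \eqref{U2'} and \eqref{ent31'''}: $U(P_1)$ equals $P_3$ or $\ol P_3$ according as $A_{31}$ is diagonal or off-diagonal, and symmetrically $U(P_3)$ equals $P_1$ or $\ol P_1$ according as $A_{13}$ is diagonal or off-diagonal.

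The second step is to compute the image of each of the eight torus-invariant divisors $D'_j,\ol D'_j$ $(1\le j\le 4)$ under $U$. Parameterizing each such divisor rationally and substituting into $U$, one checks which of the eight target divisors the image lies in. The key cancellations rely on the norm conditions $|a|=\beta$ and $|b|=\alpha$ from \eqref{newones}, which cause the auxiliary quadrics $w_0w_1+(2\lambda-3)z_2^2$ and $(3-2\lambda)z_2^2+w_4w_5$ to vanish on the image. The block $A_{22}^\pm$ sends $\alpha z_2\mp iz_3$ to a nonzero multiple of $\beta z_2\mp iz_3$ (with the lower sign flipped when $A_{22}=A_{22}^+$), while the diagonal/off-diagonal choice in $A_{13}$ and $A_{31}$ determines the action on the pairs $(w_0,w_1)$ and $(w_4,w_5)$; together these completely determine the induced permutation of $\{D'_j,\ol D'_j\}$.

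The final step is to tabulate the eight combinations of $(A_{22},A_{13},A_{31})$ and check the blow-up-pair condition in each. For instance, when $A_{22}=A_{22}^-$ with $A_{13},A_{31}$ both diagonal, direct substitution gives $U(D'_1)=D'_3$ and $U(\ol D'_2)=\ol D'_4$, matching the blow-up pair at $U(P_1)=P_3$, so this case lifts; by contrast, when $A_{22}=A_{22}^+$ with $A_{13},A_{31}$ both diagonal, one finds $U(D'_1)=D'_4$ and $U(\ol D'_2)=\ol D'_3$, giving the pair $\{\ol D'_3,D'_4\}$ rather than the required $\{D'_3,\ol D'_4\}$, so this case does not lift. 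The remaining six cases are handled by the same elementary substitution, with the pattern that emerges matching the statement of the proposition: $A_{22}^-$ pairs with matching parity of $A_{13},A_{31}$ (both diagonal or both off-diagonal), while $A_{22}^+$ pairs with opposite parity. The main obstacle is purely bookkeeping through eight parallel cases, each reducing to a short linear-algebraic verification enabled by the constraints $|a|=\beta$, $|b|=\alpha$, and $c=i$.
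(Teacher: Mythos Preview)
Your proposal is correct, but it takes a different route from the paper's proof. You carry out a direct case-by-case verification: for each of the eight combinations $(A_{22},A_{13},A_{31})$ you compute the images $U(D'_j)$, $U(\ol D'_j)$ and check the blow-up-pair criterion at the image node. The paper instead first singles out one explicit matrix $\Lambda$ of the form \eqref{U2'} (with $A_{22}=A_{22}^-$ and $A_{13},A_{31}$ diagonal), verifies by hand that $\Lambda$ sends blow-up pairs to blow-up pairs and hence lifts, and then reduces every other $U$ to Proposition~\ref{prop-key} by observing that the product $\Lambda U$ is of diagonal type (Case~I); thus $U$ lifts iff $\Lambda U$ is one of the four allowed forms there. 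Your approach is more elementary and self-contained, requiring no appeal to the conclusion of Proposition~\ref{prop-key}, at the cost of eight parallel divisor computations; the paper's approach is more economical, doing one divisor computation and then only matrix multiplications, and it has the structural payoff of exhibiting the explicit involution $\Lambda$ that is reused later in Section~\ref{geometric}.
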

\begin{proof}
We define a matrix of the form \eqref{U2'} (satisfying \eqref{ent31'''}) by 
\begin{align}\label{iota}\Lambda:=
\begin{pmatrix}
 0&0 &0&0&\beta&0\\
0&0&0&0&0&-\beta\\
0&0&0&-\i&0&0\\
0&0&\i\alpha\beta&0&0&0\\
\alpha&0&0&0&0&0\\
0&-\alpha&0&0&0&0
\end{pmatrix}.
\end{align}
We note $\Lambda^2=\alpha\beta I$, so that 
$\Lambda$ defines an involution of $\tilde{Z}$.
Moreover, we have
\begin{align}
\Lambda(D'_1)= D'_3,\,\Lambda(\ol D'_2)= \ol D'_4,\,
\Lambda(D'_3)=D'_1,\,\Lambda( \ol D'_4)=\ol D'_2.
\end{align}
In particular we have $\{\Lambda(D'_1),\Lambda(\ol D'_2)\}=\{ D'_3, \ol D'_4\}$ and 
$\{\Lambda(D'_3),\Lambda(\ol  D'_4)\}=\{D'_1,\ol D'_2\}$
Noting that $ \Lambda(P_1)=P_3,\,\Lambda(P_3)=P_1$,
this means that $\Lambda$ maps any blow-up pairs to blow-up pairs
for the small resolutions in the case $(\ast)$.
Therefore $\Lambda$ lifts to $Z$ if (and only if) the above condition 
$(\ast)$ is  satisfied.
Hence $\Lambda$ lifts to the twistor space $Z$.

Having done this, 
for any matrix $U$ of the form \eqref{U2'} (subject to \eqref{ent31'''})
we consider the product $\Lambda U$.
If $A_{13}$ and $A_{31}$ (in the matrix $U$) are diagonal and $A_{22}=A_{22}^-$, 
up to a non-zero constant,
the product  $\Lambda U$ becomes of the first form 
in Proposition \ref{prop-key}.
Hence by the proposition $\Lambda U$ lifts to $Z$.
Therefore, as $\Lambda$ lifts to $Z$ for the small resolutions in $(\ast)$ 
as above, we obtain that 
these $U$ lifts to $Z$
for the small resolutions in the case $(\ast)$.
Similarly, if $A_{13}$ and $A_{31}$ (in the matrix $U$) are off-diagonal 
and $A_{22}=A_{22}^-$, then  up to a non-zero constant, 
the product  $\Lambda U$ becomes of the second form, 
so that $U$ lifts to $Z$ for the small resolutions in $(\ast)$.
 If  $A_{13}$ and $A_{31}$ are diagonal and off-diagonal respectively 
and $A_{22}=A_{22}^+$,
then up to  a non-zero constant,  $\Lambda U$ becomes of the forth form, 
so that $U$ lifts to $Z$
for the small resolutions in  $(\ast)$.
If $A_{13}$ and $A_{31}$ are off-diagonal and diagonal 
respectively and $A_{22}=A_{22}^+$, then up to a non-zero constant,  $\Lambda U$ becomes of the 
third form, 
so that $U$ lifts to $Z$ for the small resolutions in  $(\ast)$.
Further, it can be readily checked that if $U$ is not of these 4 forms, 
then $\Lambda U$ does not coincide with any of the 4 forms 
and therefore $U$ does not lift to $Z$ for the small resolutions 
in  $(\ast)$ by Proposition \ref{prop-key}.
Thus we have proved the claim of the proposition.
\end{proof}

By Propositions \ref{prop-key}, \ref{prop-key2} and \ref{prop:sr100}, we have 
obtained explicit representations of all conformal isometries of 
Poon's metrics on $2\#\CP^2$ by $6\times 6$ matrices.
Namely, we have obtained the image of the (injective) homomorphism 
\eqref{hom3} explicitly.
\section{Geometric interpretation}
\label{geometric}
In this subsection, we investigate the geometry of the 
conformal automorphisms obtained in the previous sections.
We begin with the following
\begin{lemma}
\label{lemma:sf}
Let $n\ge 2$ and $[g_{\rm{\,LB}}]$ be a LeBrun metric on 
$n\#\CP^2$. 
Then (i) if $n\ge 3$, there exists a unique {\rm{U(1)}}-subgroup 
of ${\rm{Aut}}(g_{\,\rm{LB}})$ 
which acts semi-freely on $n\#\CP^2$,
(ii) if $n=2$, the number of such ${\rm{U(1)}}$-subgroups is two.
\end{lemma}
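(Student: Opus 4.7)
The plan is to reduce to a weight computation inside the identity component $\mathrm{Aut}_0(g_{\LB})$. Any $\U(1)$-subgroup is connected, so it lies in $\mathrm{Aut}_0(g_{\LB})$; by Theorem~\ref{summarytheorem}, this identity component is $\U(1)$ when the monopole points do not lie on a common geodesic (which forces $n\ge 3$) and equals $\U(1)\times\U(1)=K_1\times K_3$ otherwise, the latter always being the case for $n=2$. The fiber-rotation subgroup $K_1$ is itself always semi-free: it acts freely on the principal $\U(1)$-bundle $X_0$, and its fixed locus in $\hat X$ is precisely the added monopole points together with the boundary sphere $\partial\H^3$. Thus at least one semi-free $\U(1)$-subgroup always exists, and for the subcase $\mathrm{Aut}_0(g_{\LB})=\U(1)$ this is the only one, establishing (i) whenever the monopole points are not collinear.

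It remains to treat $\mathrm{Aut}_0(g_{\LB})=K_1\times K_3$. Let $I_1,\dots,I_{n+1}$ be the open intervals on the common geodesic cut out by the monopole points. The straightforward extension of Theorem~\ref{conn} and Proposition~\ref{trans} to arbitrary $n$ (the local connection form acquires a summand $\i\, d\theta_3$ across each monopole point) shows that one can choose a generator of $K_3$ which rotates the $S^1$-fiber over $I_j$ with integer weight $m_j$, where $m_1,\dots,m_{n+1}$ are $n+1$ consecutive integers. Every connected $\U(1)$-subgroup of $K_1\times K_3$ has the form $\{(e^{\i p\theta},e^{\i q\theta})\set\theta\in\RR\}$ with $\gcd(p,q)=1$, and it rotates the fiber over $I_j$ with weight $w_j=p+q\,m_j$. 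Semi-freeness is equivalent to $w_j\in\{-1,0,1\}$ for every $j$, since an interior point of such a fiber has cyclic isotropy $\ZZ/|w_j|\ZZ$ when $w_j\ne 0$, and this must be trivial.

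If $q\ne 0$ the weights $(w_j)$ form an arithmetic progression of $n+1$ terms with common difference $q$, hence span a range of length $n|q|$; the inclusion $\{w_j\}\subset\{-1,0,1\}$ therefore forces $n|q|\le 2$. For $n\ge 3$ this is impossible, so $q=0$ and $p=\pm 1$, leaving $K_1$ as the only semi-free subgroup and concluding (i). For $n=2$ the inequality permits either $q=0$ (giving $K_1$) or $|q|=1$, and in the latter case the centering condition $\{w_j\}\subset\{-1,0,1\}$ forces $p=0$, recovering $K_3$. Since $K_1\ne K_3$, exactly two semi-free $\U(1)$-subgroups arise, which is (ii). The main obstacle is verifying that a suitable generator of $K_3$ has weights forming $n+1$ consecutive integers on the interval fibers; once this combinatorial input is in hand the remaining case-check is elementary.
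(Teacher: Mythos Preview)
Your argument is correct and takes a genuinely different route from the paper's. The paper works in the toric-surface model of $n\#\CP^2$ (the $n$-fold iterated blow-up of $\CC^2$ with a point at infinity) and first invokes \cite[Proposition~1]{LeBrun1993} to conclude that any semi-free $\U(1)$-action has non-isolated fixed locus; this immediately reduces the candidates to the $n+2$ stabilizer subgroups of the $K$-invariant rational curves, which are then checked one by one. You instead parametrize \emph{all} $\U(1)$-subgroups of $K_1\times K_3$ by primitive $(p,q)$ and exploit the arithmetic-progression structure of the fiber weights $w_j=p+qm_j$ to exclude everything except $K_1$ (and, for $n=2$, one further subgroup). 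Your approach avoids the external citation and is arguably more self-contained, staying entirely within the explicit bundle picture of Section~\ref{explicit}.

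One small gap to close: the equivalence you assert between semi-freeness and the condition $w_j\in\{-1,0,1\}$ for $j=1,\dots,n+1$ is only justified in the necessary direction by the isotropy argument you give. You have checked the $n+1$ spheres over the geodesic intervals but not the boundary sphere $\Sigma_{n+2}=\partial\H^3$, on which $K_1$ acts trivially and $K_3$ rotates with weight~$1$; there the $(p,q)$-subgroup has weight $q$, so semi-freeness also demands $q\in\{-1,0,1\}$. This extra constraint is already implied by your bound $n|q|\le 2$, so the omission does not affect the final count, but in the $n=2$ case you still need it (or a direct appeal to the $k=0$ case of the proposition following Theorem~\ref{conn}) to confirm that the second candidate is genuinely semi-free. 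Finally, your claim that the centering forces $p=0$ presupposes that the chosen generator of $K_3$ has $m_j\in\{-1,0,1\}$; since $K_3$ is by definition the $k=0$ lift this holds for $n=2$, but it is worth saying so.
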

\begin{proof}
Let $p_1,\dots,p_n\in\H^3$ be the monopole points of $[g_{\rm{\,LB}}]$.
Then the structure group \U(1) of the principal bundle 
over $\H^3 \setminus \{p_1,\dots,p_n\}$ acts semi-freely 
on $n\CP^2$, and it coincides with the identity component 
of ${\rm{Aut}}(g_{\,\rm{LB}})$ if and only if the $n$ points do
not lie on a common geodesic. 
Therefore to prove (i) it suffices to consider the case 
that $p_1,\dots,p_n$ are contained on a common geodesic.
If the last condition is satisfied, the identity component 
of ${\rm{Aut}}(g_{\,\rm{LB}})$ becomes the torus $K$.
Note that for $n=2$, this condition is automatically satisfied.

The $K$-action on $n\#\CP^2$ is obtained as follows.
First consider a standard $K$-action on $\CC^2$, 
which is given by $(z,w)\mapsto (sz,tw)$ for 
$(s,t)\in \U(1) \times \U(1)$. 
We blow-up $\CC^2$ at $n$ points in such a way that the blown-up 
points are always on the unique $K$-fixed point of the 
strict transform of the $z$-axis.
Let $\tilde{\CC}^2$ be the resulting complex (toric) surface.
Next, we add a  point at infinity to $\tilde{\CC}^2$.
Then by reversing the standard orientation, we obtain  
$n\#\CP^2$ with a $K$-action. 
(Over the open subset 
$\tilde{\CC}^2\subset n\#\CP^2$, $[g_{\rm{\,LB}}]$ 
contains a K\"ahler scalar-flat metric with a $K$-action.)
As this $K$-action contains a $\U(1)$-subgroup acting semi-freely 
(which is explicitly given by $\{(s,t)\set s=1\}$), it can be 
identified with the identity component of ${\rm{Aut}}(g_{\,\rm{LB}})$ 
(in the present situation).
Hence to prove the lemma it is enough to classify all 
$\U(1)$-subgroups of $K$ which act semi-freely on $\tilde {\CC}^2$.

If $K_1\subset K$ is such a $\U(1)$-subgroup, $K_1$ has non-isolated 
fixed points \cite[Proposition 1]{LeBrun1993}.
Hence, since the $K$-action on $\tilde{\CC}^2$ is free on the 
preimage of $\CC^2 \setminus \{zw=0\}$, the subgroup $K_1$ 
has to fix the strict transform of the $z$-axis or the $w$-axis, 
or some exceptional curve of the blow-up $\tilde{\CC}^2\to\CC^2$.
On these $K$-invariant subsets, the $K$-action is explicitly given 
by multiplication by
\begin{align}
t,\,s^{-1},\, ts^{-1},\,ts^{-2},\dots,ts^{-n},
\end{align}
respectively.
Namely, all subgroups having non-isolated fixed locus are explicitly given by
\begin{align}
\{t=1\},\,\{s=1\},{\text{ and }} \{t=s^k\}\,(1\le k\le n)\,.
\end{align}
Since $n\ge 2$ the first one acts non-semi-freely, whereas the 
second one acts semi-freely.

For the remaining subgroups $\{t=s^k\}$ ($1\le k\le n$), 
the action on the $(n+2)$ $K$-invariant subsets (in the last paragraph) 
\begin{align}
s^k,\,s^{k-1},\,s^{k-2},\,\dots,s^{k-n},\,s^{-1}.
\end{align}
Hence the action becomes semi-free if and only if $n=2$ and $k=1$.
This means that if $n\ge 3$ the subgroup $\{s=1\}$ is the 
unique $\U(1)$-subgroup acting semi-freely, 
and if $n=2$, the subgroups $\{s=1\}$ and $\{t=s\}$ are all such 
subgroups. Thus we have obtained the claim of the lemma.
\end{proof}
We return to the case of $2\#\CP^2$.
Recall that in the proof of Lemma \ref{lemma-inv7} we have 
defined two $\CC^*$-subgroups $G_1$ and $G_3$ 
(explicitly defined as \eqref{G1} and \eqref{G3}).
\begin{lemma}\label{lemma:sf2}
Viewing the group $G=\CC^*\times\CC^*$ (acting on Poon's
twistor space) as the complexification of  
$K={\rm{U(1)}}\times{\rm{U(1)}}$ (acting on Poon's metric),
the subgroups $G_1$ and $G_3$ of $G$ are exactly the complexification 
of the two {\rm{U(1)}}-subgroups acting semi-freely on $2\#\CP^2$.
\end{lemma}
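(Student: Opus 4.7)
The plan is to identify $G_1\cap K$ and $G_3\cap K$ as exactly the two semi-free ${\rm U}(1)$-subgroups of ${\rm Aut}(g_{\rm{LB}})$ provided by Lemma \ref{lemma:sf}(ii). The first step is to observe from the explicit formulas \eqref{G1} and \eqref{G3} that $G_1$ and $G_3$ are visibly two distinct $\CC^*$-subgroups of $G$, so their maximal compact subgroups $G_1\cap K$ and $G_3\cap K$ are two distinct ${\rm U}(1)$-subgroups of $K$.

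Next, I would appeal to the Jones-Tod correspondence combined with LeBrun's construction: a semi-free ${\rm U}(1)$-subgroup of the conformal group of a self-dual $4$-manifold produces, via the $\CC^*$-quotient of the twistor space, the minitwistor space of the induced Einstein-Weyl structure on the quotient $3$-manifold, which in LeBrun's hyperbolic ansatz is precisely $\CP^1\times\CP^1$ realized as the space of oriented geodesics of $\H^3$. Conversely, once one knows that the $\CC^*$-quotient realizes a smooth minitwistor fibration of the hyperbolic type, the corresponding real ${\rm U}(1)$-subgroup must act semi-freely, since any non-trivial finite stabilizer on $2\#\CP^2$ would produce an orbifold point in the Einstein-Weyl quotient $\H^3$, contradicting its smoothness. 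Crucially, the work already done in the proof of Lemma \ref{lemma-inv7} identifies the rational $\CC^*$-quotient maps of the twistor space $Z$ by $G_1$ and by $G_3$ with the morphisms $f_1\circ\Psi$ and $f_3\circ\Psi$, whose images \eqref{image1} are non-singular quadrics in $\CP^3$, hence each isomorphic to $\CP^1\times\CP^1$.

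Combining these ingredients, each of $G_1\cap K$ and $G_3\cap K$ acts semi-freely on $2\#\CP^2$; and by Lemma \ref{lemma:sf}(ii) there are exactly two such ${\rm U}(1)$-subgroups. Since we have produced two distinct semi-free ones, they must coincide with the two provided by the lemma, completing the identification. The main obstacle will be writing the converse direction of Jones-Tod cleanly (distinguishing between mere existence of a $2$-dimensional fixed locus and actual semi-freeness). If the abstract argument proves inconvenient, an alternative would be to use Lemma \ref{lemma-inv7}(ii) to compute the weights of the $G_j\cap K$-action on the tangent spaces at the four isolated $K$-fixed points of $2\#\CP^2$ (visible as the four $G$-invariant twistor lines \eqref{tl1}--\eqref{tl4}), and verify directly that the non-trivial weight of $G_j\cap K$ at each such point equals $\pm 1$, which is equivalent to semi-freeness at isolated fixed points.
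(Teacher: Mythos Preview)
Your approach is genuinely different from the paper's, and the obstacle you flag is real. The paper never invokes Jones--Tod here; it works entirely on the twistor side. Since the twistor fibration $Z\to 2\#\CP^2$ is $K$-equivariant and any finite subgroup of $G_j\cong\CC^*$ already lies in $K_j=G_j\cap K$, it suffices to check that $G_1$ and $G_3$ act semi-freely on $Z$. From the diagonal form \eqref{G1}--\eqref{G3} the weights on the homogeneous coordinates of $\CP^5$ are all in $\{0,\pm 1\}$, so the action on $\tilde Z\subset\CP^5$ is semi-free (the only points of $\CP^5$ with a nontrivial finite $G_1$-stabilizer lie on the line $z_2=z_3=w_4=w_5=0$, which meets $\tilde Z$ only in the fixed nodes $P_1,\ol P_1$; similarly for $G_3$). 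It then remains to examine the four exceptional curves $C_1,\ol C_1,C_3,\ol C_3$ of the small resolution $Z\to\tilde Z$; the paper reads off their $G_1$- and $G_3$-weights from the invariant divisors $D_i',\ol D_i'$ and finds them to be $0$ or $1$. That is the whole argument.

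Your primary route has exactly the gap you identify: smoothness of the rational quotient $f_j(\tilde Z)\cong\CP^1\times\CP^1$ does not by itself exclude $\ZZ_k$-stabilizers upstairs, since the orbit space of a circle action on a $4$-manifold is topologically a manifold even in the presence of exceptional orbits; detecting them from the Einstein--Weyl side would require a finer argument than ``$\H^3$ is smooth''. Your alternative---computing the weights of $G_j\cap K$ at the four isolated $K$-fixed points---is closer in spirit to the paper, but as stated it is incomplete: weights $\pm 1$ at the isolated fixed points do not rule out exceptional orbits along the $K$-invariant $2$-spheres, so you would also have to check the normal weights there. The paper's decision to work on $Z$ (where the action is visibly diagonal on $\CP^5$ and the only extra locus to inspect is four rational curves) turns this global check into a two-line computation.
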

\begin{proof}
We freely use notations in the previous section.
It suffices to show that $G_1$ and $G_3$ act semi-freely on the 
twistor space $Z$.
By their explicit forms \eqref{G1} and \eqref{G3}, $G_1$ and 
$G_3$ clearly act semi-freely on   $\CP^5$. Therefore 
they act semi-freely on the projective model $\tilde{Z}$.
Hence it is enough to show that they act semi-freely on the 
exceptional curves  $C_1,C_3,\ol C_1$ and $\ol C_3$ of the small resolutions $Z\to \tilde{Z}$. 
The weights for the $G_1$ and $G_3$-actions on these curves can readily computed
by using the $G$-invariant divisors $D'_i$ and $\ol D'_i$ $(1\le i\le 4)$, and 
they become either $0$ or $1$.
Thus we conclude that $G_1$ and $G_3$ act semi-freely on $Z$.
\end{proof}
Let $K_1$ and $K_3$ be the $\U(1)$-subgroups of $K$ whose 
complexifications are $G_1$ and $G_3$, respectively.
We know that these are all of the $\U(1)$-subgroups 
acting semi-freely. For these subgroups, we set 
\begin{align}
X_0=\{p\in 2\CP^2\set 
{\text{the isotropy subgroup of $K_1$ at $p$ is \{{\rm{Id}}}}\}\},
\end{align}
and 
\begin{align}
Y_0=\{p\in 2\CP^2\set 
{\text{the isotropy subgroup of $K_3$ at $p$ is \{{\rm{Id}}}}\}\}.
\end{align}
From the  proof of Lemma \ref{lemma:sf} we know $X_0\neq Y_0$.
Let $p_1$ and $p_2$ be the image of the two isolated fixed points 
of the $K_1$-action  under the quotient map 
$2\#\CP^2\to2\#\CP^2/K_1$.
Similarly, let $q_1$ and $q_2$ be the image of the two 
isolated fixed points of the $K_3$-action  under the 
quotient map $2\#\CP^2\to2\#\CP^2/K_3$.
Then since $[g_{\rm{\,LB}}]$ is $K_1$-invariant, 
by the result of LeBrun \cite{LeBrun1993}, the quotient space 
$\H^3_1:=(X_0/K_1)\cup\{p_1,p_2\}$ becomes a 
$3$-manifold  equipped with a hyperbolic metric and 
$g_{\rm{\,LB}}$ is obtained by the hyperbolic ansatz with
monopole points $p_1$ and $p_2$.
Similarly, $\H^3_3:=(Y_0/K_3)\cup\{q_1,q_2\}$ 
becomes a $3$-manifold  equipped with a hyperbolic metric 
and $g_{\rm{\,LB}}$ is obtained by the hyperbolic ansatz 
whose monopole points are $q_1$ and $q_2$.
Thus any Poon metric on $2 \# \CP^2$ has the following 
double fibration:
\begin{align}\label{df}
 \xymatrix{ & 2\#\CP^2 \ar[dl]_{\pi_1} \ar[dr]^{\pi_3} & \\ 
\H^3_1\cup\partial \H^3_1&& \H^3_3\cup\partial \H^3_3. }
\end{align}
Here, $\pi_1$ and $\pi_3$ are the quotient maps by the 
$K_1$-action and $K_3$-action, respectively, and 
$\partial \H^3_1(\simeq S^2)$ and 
$\partial \H^3_3(\simeq S^2)$ are the images of the
non-isolated fixed locus of the $K_1$-action and 
$K_3$-action, respectively. Note that if $n\ge 3$, an analogous double fibration 
does not exist by Lemma \ref{lemma:sf}. 

By Propositions \ref{prop-key} and \ref{prop-key2}, when 
$n=2$ the group ${\rm{Aut}}(g_{\rm{\,LB}})$ consists of 8 tori.

\begin{definition}\label{def:H}{\em
We define $H$ to be a subgroup of the full conformal isometry 
group ${\rm{Aut}}(g_{\rm{\,LB}})$ consisting of the 4 tori in 
Proposition \ref{prop-key}; namely $H$ consists of automorphisms  
which are lifts of automorphisms of the projective model $\tilde{Z}$ 
represented by matrices of `diagonal type'.}
\end{definition}

\begin{proposition}\label{prop-key3}
  The image of the subgroup $H$ under the homomorphism 
\begin{align*}
{\rm{Aut}}(g_{\LB})\lra {\rm{GL}}(H^0(Z,F))
\end{align*} 
in \eqref{hom2} preserves the two subspaces 
$H^0(Z, F)^{G_1}$ and  $H^0(Z, F)^{G_3}$. 
\end{proposition}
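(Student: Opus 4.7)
The plan is to read off the conclusion directly from the explicit block-diagonal matrix form that $H$ inherits from Proposition~\ref{prop-key}, after first identifying the two invariant subspaces of $H^0(Z,F)$ in the coordinate basis coming from Poon's embedding.

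First, I would use Proposition~\ref{prop-poon10}: the map $\Psi:Z\to\CP^5$ is the morphism associated to $|F|$ and $\dim H^0(Z,F)=6$, so the pullbacks of the homogeneous coordinates $w_0,w_1,z_2,z_3,w_4,w_5$ form a basis of $H^0(Z,F)$. By the explicit definitions \eqref{G1} and \eqref{G3}, the subgroups $G_1$ and $G_3$ act on this basis with weights $(1,-1,0,0,0,0)$ and $(0,0,0,0,1,-1)$, respectively, so
\begin{align*}
H^0(Z,F)^{G_1} = \langle z_2,z_3,w_4,w_5\rangle,\qquad
H^0(Z,F)^{G_3} = \langle w_0,w_1,z_2,z_3\rangle.
\end{align*}

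Next, I would invoke Proposition~\ref{prop-key}: every element of $H$ is, by definition, the lift of an automorphism of $\tilde Z$ represented (up to scalar) by a matrix $U$ of the block-diagonal form \eqref{U1'}, whose three $2\times 2$ blocks $A_{11},A_{22},A_{33}$ act respectively on the coordinate pairs $(w_0,w_1)$, $(z_2,z_3)$, $(w_4,w_5)$. The induced action of such a lift on $H^0(Z,F)$ via the homomorphism \eqref{hom2} is, by construction (namely, the factorization of \eqref{hom2} through \eqref{hom3} in the basis above), represented by the same matrix $U$. Since $U$ is block-diagonal with respect to the splitting
\begin{align*}
H^0(Z,F) = \langle w_0,w_1\rangle \oplus \langle z_2,z_3\rangle \oplus \langle w_4,w_5\rangle,
\end{align*}
it preserves each summand, and therefore preserves any sum of summands, in particular the two four-dimensional subspaces identified above.

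I do not expect any serious obstacle: the only point requiring a moment's care is the identification of the action on $H^0(Z,F)$ with the matrix $U$, which is essentially tautological after fixing the coordinate basis. To confirm that the proposition is sharp, I would remark on the contrast with Case~(II) of Proposition~\ref{prop-key2}: an off-diagonal matrix of the form \eqref{U2'} interchanges $\langle w_0,w_1\rangle$ and $\langle w_4,w_5\rangle$ while preserving $\langle z_2,z_3\rangle$, and therefore \emph{swaps} $H^0(Z,F)^{G_1}$ and $H^0(Z,F)^{G_3}$ rather than preserving each; this is precisely why the statement is made for the index-two subgroup $H$ of diagonal-type lifts rather than for the full conformal group.
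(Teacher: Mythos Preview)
Your proposal is correct and follows essentially the same route as the paper: identify $H^0(Z,F)^{G_1}=\langle z_2,z_3,w_4,w_5\rangle$ and $H^0(Z,F)^{G_3}=\langle w_0,w_1,z_2,z_3\rangle$ from the explicit torus action, note that by definition every element of $H$ is represented by a block-diagonal matrix in the $(w_0,w_1)\oplus(z_2,z_3)\oplus(w_4,w_5)$ splitting, and conclude. Your added remark contrasting with the off-diagonal Case~(II) matrices is not in the paper's proof of this proposition but is consistent with the subsequent Proposition~\ref{bcprop}.
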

\begin{proof}
Take any $\Phi\in H$ and let $U\in {\rm{GL}}(6,\CC)$ 
be the image of $H$ under the homomorphism, where we are 
using $\{w_0,w_1,z_2,z_3,w_4,w_5\}$ as a basis of 
$H^0(Z,F)\simeq\CC^6$ as before.
 By the definition of the subgroup $H$, 
$U$ must be of the form
\begin{eqnarray}\label{U7}
\left(
\begin{array}{ccc}
A_{11}&O &O \\
O&A_{22} &O \\
O& O&A_{33}\\
\end{array} 
\right),\hspace{2mm}
A_{11},A_{22},A_{33}\in {\rm{GL}}(2,\CC).
\end{eqnarray}
On the other hand, by \eqref{G-action1}, the two 
subspaces are explicitly given by
\begin{align}
H^0(Z, F)^{G_1}=\langle z_2,z_3,w_4,w_5\rangle, \mbox{ and }
H^0(Z, F)^{G_3}=\langle w_0,w_1,z_2,z_3\rangle.
\end{align}
These directly imply the claim of the proposition.
\end{proof}
\begin{proposition}
Let $n=2$ and 
 $H\subset {\rm{Aut}}(g_{\,\rm LB})$ be as in Definition \ref{def:H}.
Then there are  homomorphisms 
\begin{align}
\rho_1:H\lras
{\rm{Aut}}(\H^3_1;p_1,p_2)
\end{align}
and
\begin{align}
\rho_3:H\lras
{\rm{Aut}}(\H^3_3;q_1,q_2)
\end{align}
such that $\rho_j (\Phi) = \phi$, where $\Phi$ is any 
lift of $\phi$ obtained in Proposition \ref{hypconf}. 
\end{proposition}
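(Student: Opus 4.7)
The plan is to mimic the proof of Proposition \ref{prop-hom} from the $n \geq 3$ case, but applied separately to each of the two semi-free $\U(1)$-actions $K_1$ and $K_3$. Recall from Lemma \ref{lemma-inv7} and \eqref{image1} that the two linear projections
\begin{align*}
f_j \circ \Psi : Z \lras \CP^3, \quad j = 1, 3,
\end{align*}
have images which are smooth quadrics $f_j(\tilde{Z}) \cong \CP^1 \times \CP^1$, and intrinsically $f_j \circ \Psi$ is the meromorphic map associated to $H^0(Z, F)^{G_j}$. By the result of Jones-Tod \cite{JonesTod} together with LeBrun's hyperbolic ansatz construction, these two quadrics are precisely the minitwistor spaces (in the sense of Hitchin \cite{Hitchin}) of $\H^3_1$ and $\H^3_3$ respectively, with the two hyperbolic spaces being identified with the space of real irreducible $(1,1)$-curves disjoint from the real loci, and with $(\CP^1 \times \CP^1)^\sigma \simeq \partial \H^3_j$ in each case.

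The construction of $\rho_j$ then proceeds in three steps. First, by Proposition \ref{prop-key3}, the image of $H$ under \eqref{hom2} preserves the subspaces $H^0(Z, F)^{G_j}$, so $H$ descends to a homomorphism $H \to {\rm{Aut}}(f_j(\tilde Z)) = {\rm{Aut}}(\CP^1 \times \CP^1)$ for each $j$. Second, arguing as in the derivation of \eqref{hom4}, one checks that this descended action maps minitwistor lines to minitwistor lines: the key input is that $H$ preserves the boundary sphere $\partial \H^3_j \subset 2 \# \CP^2$, which follows from the diagonal form of the matrices in $H$ (Proposition \ref{prop-key}) together with the fact that $H$ preserves the real locus $(\CP^1 \times \CP^1)^\sigma$ of each quadric. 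This yields a homomorphism $\rho_j : H \to {\rm{Aut}}(\H^3_j)$. Third, one identifies the discriminant curves of the morphism analogous to Proposition \ref{prop-LB10}(iii) on each quadric as the bidegree-$(1,1)$ curves corresponding to the monopole points, and observes that the action of $H$ preserves them: this follows from Lemma \ref{lemma-inv7}(i) together with the explicit equations \eqref{tl1}--\eqref{tl4} for the $K$-invariant twistor lines. Consequently, the image of $\rho_j$ lies in ${\rm{Aut}}(\H^3_1; p_1, p_2)$ (resp.\ ${\rm{Aut}}(\H^3_3; q_1, q_2)$). Finally, the identity $\rho_j(\Phi) = \phi$ for lifts $\Phi$ from Proposition \ref{hypconf} is checked exactly as at the end of the proof of Proposition \ref{prop-hom}: for a non-monopole point $p$ with $\phi(p) = q$, take $\tilde p$ in the fiber over $p$ and let $\tilde q = \Phi(\tilde p)$; then $\Phi$ sends the minitwistor line $\Psi(L_{\tilde p})$ to $\Psi(L_{\tilde q})$, and by the Jones-Tod correspondence these correspond on $\H^3_j$ to $p$ and $q$ respectively.

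The main obstacle, and the reason the proposition is restricted to the subgroup $H$ rather than the full conformal group, is the verification that elements of $H$ individually preserve each of the subspaces $H^0(Z, F)^{G_1}$ and $H^0(Z, F)^{G_3}$. This is the content of Proposition \ref{prop-key3}, and the diagonal-type block structure of the Case (I) matrices \eqref{U1'} is precisely what ensures this preservation. In contrast, the Case (II) matrices \eqref{U2'} (including the extra involution $\Lambda$) interchange the two subspaces, which geometrically reflects the fact that the conformal involution $\Lambda$ swaps the two fibrations in \eqref{df} and hence cannot descend to either $\H^3_1$ or $\H^3_3$ individually. Once the preservation of each subspace is in hand, the rest of the argument is a direct adaptation of the $n \geq 3$ case, using the minitwistor interpretation of each quadric and the Jones-Tod correspondence.
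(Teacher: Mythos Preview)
Your proposal is correct and follows essentially the same approach as the paper: both use Proposition~\ref{prop-key3} to descend $H$ to automorphisms of each quadric $f_j(\tilde Z)\cong\CP^1\times\CP^1$, interpret these quadrics as minitwistor spaces of $\H^3_j$, verify that minitwistor lines and discriminant curves are preserved, and invoke the Jones--Tod correspondence exactly as in the $n\ge3$ argument. The only minor difference is in the justification that $\partial\H^3_j$ is preserved---the paper argues via the exceptional curves $C_j,\ol C_j$ over the nodes $P_j,\ol P_j$ being mapped to the $K_j$-fixed spheres, whereas you argue directly via preservation of the real locus $(\CP^1\times\CP^1)^\sigma$; both are valid.
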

\begin{proof}
We recall that we have defined the linear projections 
$f_j:\CP^5\to\CP^3$ for $j=1,3$ which are 
explicitly given by \eqref{proj1}-\eqref{proj2}.
By the definition and \eqref{G-action1}, the composition 
$f_j\circ\Psi:Z\to\CP^3$ is exactly the rational 
map associated to the vector space $H^0(Z, F)^{G_j}$.
The image $f_j\circ\Psi(Z)=f_j(\tilde{Z})$ (explicitly given as 
\eqref{image1}) is isomorphic to $\CP^1\times\CP^1$, 
on which $K_j$ acts trivially.
Moreover, by Proposition \ref{prop-key3}, $H$ automatically 
preserves the quadric $f_j(\tilde{Z})$ for $j=1,3$.
(This is also clear from Proposition \ref{prop-key} and \eqref{image1}.)
Hence for $j=1,3$ we obtain two homomorphisms
\begin{equation}\label{hom100}
H\lra {\rm{Aut}}(\CP^1\times\CP^1).
\end{equation}
Furthermore, as we have considered those matrices $U$ which commute 
with the real structure, the image of these homomorphisms 
commutes with the natural real structure on $\CP^1\times\CP^1$.
Moreover, if $U$  is a matrix representing an element of $H$,
 we have $\{U(P_1), U(\ol P_1)\}=\{P_1,\ol P_1\}$ and  
$\{U(P_3), U(\ol P_3)\}=\{P_3,\ol P_3\}$.
If $C_1$ and $C_3$ respectively denote the exceptional curves 
(for $\Psi$) over the singular points $P_1$ and $P_3$ of $\tilde{Z}$ 
as before, by the twistor fibration $C_1$ and $C_3$ are 
mapped to the 2-spheres which are fixed by $K_1$-action 
and $K_3$-action, respectively.
Hence any $\Phi\in H$ leaves the boundary sphere 
$\partial \H^3_j\subset2\#\CP^2$ invariant 
for $j=1$ and $3$. Therefore, viewing $\CP^1\times\CP^1$  
as the minitwistor space of the hyperbolic space $\H^3_j$ 
as in the case $n\ge 3$, we obtain a homomorphism 
\begin{equation}\label{hom101}
\rho_j:H\lra {\rm{Aut}}(\H^3_j)\ (j=1,3).
\end{equation}
Moreover, the image of \eqref{hom100} preserves the set of 
discriminant curves $\{\mathcal C_1,\mathcal C_2\}$ of the 
map $f_j\circ\Psi$ by the same reason for the case $n\ge 3$ 
given in Proposition \ref{prop-LB10} (iii). 
Hence the image of \eqref{hom101} is contained in 
${\rm{Aut}}(\H^3_1;\,p_1,p_2)$ for $j=1$ and 
${\rm{Aut}}(\H^3_3;\,q_1,q_2)$ for $j=3$.
Furthermore, the homomorphism $\rho_j$ is an inverse 
of the lift in Proposition \ref{hypconf} by the same reason for 
the case $n \geq 3$ given in the final part of the proof of 
Proposition \ref{prop-hom}. This finishes the proof. 
\end{proof}
This means that the action of the subgroup $H$ preserves each 
of the two fibrations in \eqref{df} respectively.
On the other hand, for automorphisms not belonging to $H$, 
we have the following
\begin{proposition}
\label{bcprop}
If $\Phi\in {\rm{Aut}}(g_{\,\rm LB})$ satisfies $\Phi\not\in H$, 
$\Phi$ maps any fiber of $\pi_1$ to a fiber of $\pi_3$, and any fiber of 
$\pi_3$ to a fiber of $\pi_1$, where $\pi_1$ and $\pi_3$ are the 
quotient maps by the $K_1$-action and the $K_3$-action, respectively, as before.
\end{proposition}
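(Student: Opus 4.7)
The plan is to reduce the statement to a group-theoretic fact about the action of $\text{Aut}(g_{\rm{\,LB}})$ on the identity component $K = K_1\times K_3$, via the explicit $6\times 6$ matrix representations obtained in Section~\ref{ss:detsr} and Section~\ref{detconf}. First, since $H$ consists precisely of the lifts of the $8$ tori in Case (I) of Proposition \ref{prop-key}, an element $\Phi\in {\rm{Aut}}(g_{\rm{\,LB}})\setminus H$ must correspond to a matrix $U$ of the Case (II) form \eqref{U2'}; the homomorphism \eqref{hom3} is injective, so this $U$ uniquely represents $\Phi$. The fibers of $\pi_j$ over points where the $K_j$-action is free are exactly the $K_j$-orbits, and the remaining (singular) fibers are the closures of the $K_j$-fixed $2$-spheres and isolated fixed points; all of these are characterized purely by their $K_j$-orbit structure.

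The key computation is the following: for any $U$ of the form \eqref{U2'}, conjugation by $U$ swaps the two $\CC^*$-subgroups $G_1$ and $G_3$ of $G$ defined in \eqref{G1} and \eqref{G3}. Indeed, writing $U^{-1}$ in the same block-antidiagonal shape, a direct block multiplication shows that for any $g_1={\rm{diag}}(s,s^{-1},1,1,1,1)\in G_1$ one has
\begin{align*}
U g_1 U^{-1} = {\rm{diag}}\bigl(1,1,1,1,\, A_{31}{\rm{diag}}(s,s^{-1})A_{31}^{-1}\bigr),
\end{align*}
and the $2\times 2$ conjugation $A_{31}{\rm{diag}}(s,s^{-1})A_{31}^{-1}$ equals either ${\rm{diag}}(s,s^{-1})$ or ${\rm{diag}}(s^{-1},s)$, according to whether $A_{31}$ is diagonal or off-diagonal in \eqref{ent31'''}. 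In either case $Ug_1U^{-1}\in G_3$, and by the same calculation with the roles of $A_{13}$ and $A_{31}$ swapped, $U G_3 U^{-1}\subseteq G_1$. Both inclusions are equalities by symmetry.

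To finish, take real forms: by Lemma \ref{lemma:sf2}, $K_j$ is the unique maximal compact subgroup of $G_j$, so the conjugation $\Phi K_1\Phi^{-1}=K_3$ follows from the previous paragraph. Consequently, for any $p\in 2\#\CP^2$,
\begin{align*}
\Phi(K_1\cdot p) = (\Phi K_1 \Phi^{-1})\cdot \Phi(p) = K_3\cdot \Phi(p),
\end{align*}
so the $K_1$-orbit of $p$ (the fiber $\pi_1^{-1}(\pi_1(p))$) is sent to the $K_3$-orbit of $\Phi(p)$ (the fiber $\pi_3^{-1}(\pi_3(\Phi(p)))$). The symmetric statement interchanging $\pi_1$ and $\pi_3$ is obtained from $\Phi K_3\Phi^{-1}=K_1$. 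The main obstacle is really just bookkeeping: one must verify the conjugation computation for all four choices of $(A_{13},A_{31})$ in \eqref{ent31'''}, but this is an elementary $2\times 2$ matrix computation that is uniform across the cases since only the diagonal/off-diagonal structure of $A_{31}$ enters.
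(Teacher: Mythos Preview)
Your proof is correct and follows essentially the same approach as the paper: both arguments show that a Case~(II) matrix $U$ satisfies $UG_1U^{-1}=G_3$ (equivalently $UG_1=G_3U$), whence $U$ carries $G_1$-orbits to $G_3$-orbits and conversely, and then restrict to the real forms $K_1,K_3$. The only cosmetic difference is that the paper encodes the conjugation via the parameter identities $B(s)U(a,b)=U(sa,b)$, $U(a,b)C(t)=U(t^{-1}a,b)$, etc., while you verify $Ug_1U^{-1}\in G_3$ by direct block multiplication; one minor slip is that $H$ comprises the \emph{four} tori of Proposition~\ref{prop-key} (not eight), but your conclusion that $\Phi\notin H$ forces a Case~(II) representation is unaffected.
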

\begin{proof}
Since the lift of the $K_j$-actions ($j=1,3$) on $2\#\CP^2$ 
to the twistor space is given by the restriction of the 
$G_j$-action to the real forms by Lemma \ref{lemma:sf2},  
it suffices to show that by any $\Phi\not\in H$, $G_1$-orbits 
are mapped to $G_3$-orbit, and $G_3$-orbits are mapped to $G_1$-orbits.
Let $U$ be a $6\times 6$ matrix corresponding to $\Phi\not\in H$.
Then $U$ is as in Proposition \ref{prop-key2}.
As $U$ contains 2 parameters $a$ and $b$ (satisfying $|a|=\beta$ 
and $|b|=\alpha$), we write $U=U(a,b)$ (to simplify notation). 
On the other hand, the subgroups $G_1$ and $G_3$ are explicitly 
given in \eqref{G1} and \eqref{G3}.
Let $B(s):={\rm{diag}}(s,s^{-1},1,1,1,1)$
and $C(t):={\rm{diag}}(1,1,1,1,t,t^{-1})$.
Then as $6\times 6$ matrices, we have the following relations
\begin{align}
\begin{split}
B(s)U(a,b)&=U(sa,b),\  U(a,b)B(s)=U(a,s^{-1}b),\\
C(t)U(a,b)&=U(a,tb),\  U(a,b)C(t)=U(t^{-1}a,b).
\end{split}
\end{align}
These imply that $U(a,b)$ interchanges $G_1$-orbits and 
$G_3$-orbits, as required.
\end{proof}
As an immediate consequence of the above discussion, we obtain the 
following
\begin{corollary}
Let $d_1$ and $d_3$ be the hyperbolic distance between $p_1$ and 
$p_2\in \H^3_1$, and $q_1$ and $q_2\in \H^3_3$, 
respectively. Then $d_1=d_3$ holds.
\end{corollary}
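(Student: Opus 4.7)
The plan is to exploit the existence of an automorphism $\Phi \in \mathrm{Aut}(g_{\,\rm LB}) \setminus H$, whose existence is guaranteed by the classification in Propositions \ref{prop-key} and \ref{prop-key2} (for instance, the involution $\tilde\Lambda(0)$ or the matrix $\Lambda$ defined in \eqref{iota}). The idea is that such a $\Phi$ should descend to an honest hyperbolic isometry between $\H^3_1$ and $\H^3_3$ that carries the monopole points $\{p_1,p_2\}$ to $\{q_1,q_2\}$, from which the equality of distances is immediate.

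First, I would observe that by Proposition \ref{bcprop}, $\Phi$ maps each fiber of $\pi_1$ to a fiber of $\pi_3$ and vice versa. Consequently, $\Phi$ induces a homeomorphism
\begin{align*}
\bar\Phi : \H^3_1 \cup \partial \H^3_1 \lras \H^3_3 \cup \partial \H^3_3
\end{align*}
characterized by $\pi_3 \circ \Phi = \bar\Phi \circ \pi_1$ on the open subset where both $\pi_1$ and $\pi_3$ restrict to honest submersions. Moreover, since $\Phi$ interchanges the subgroups $K_1$ and $K_3$, it carries the isolated fixed point set of $K_1$ to the isolated fixed point set of $K_3$; passing to the quotient, this translates to $\bar\Phi(\{p_1,p_2\}) = \{q_1,q_2\}$.

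Next, I would argue that $\bar\Phi$ is an isometry of the hyperbolic metrics. This is the step that requires most care, but it reduces to the naturality of LeBrun's ansatz. Recall from \cite{LeBrun1993} that on the complement of the monopole points, the hyperbolic metric on $\H^3_j$ is determined intrinsically by the conformal class $[g_{\,\rm LB}]$ together with the choice of the semi-free $\U(1)$-subgroup $K_j$: one picks the canonical scalar-flat K\"ahler representative for the complex structure determined by the corresponding $K_j$-action and takes the quotient. Since $\Phi$ is conformal and intertwines the $K_1$- and $K_3$-actions, it sends the canonical K\"ahler representative associated to $K_1$ to the canonical K\"ahler representative associated to $K_3$. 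Passing to the quotient, $\bar\Phi$ carries the hyperbolic metric $g_{\H^3_1}$ to $g_{\H^3_3}$.

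Finally, since $\bar\Phi$ is a hyperbolic isometry sending $\{p_1,p_2\}$ to $\{q_1,q_2\}$, it preserves hyperbolic distance, so
\begin{align*}
d_1 = d_{\H^3_1}(p_1,p_2) = d_{\H^3_3}(\bar\Phi(p_1),\bar\Phi(p_2)) = d_{\H^3_3}(q_1,q_2) = d_3.
\end{align*}
The main obstacle is the isometry step: one has to be sure that the identification of the quotient hyperbolic metrics is canonical, not merely conformal. The cleanest way is probably to observe that on $Z$ the two minitwistor fibrations $f_j \circ \Psi$ ($j=1,3$) are the ones appearing in the Jones--Tod correspondence associated with $K_j$, and that $\Phi$ descends to an isomorphism of these minitwistor spaces that interchanges the two fibrations; invoking Jones--Tod \cite{JonesTod} then yields a hyperbolic isometry between $\H^3_1$ and $\H^3_3$ at the level of minitwistor lines, just as in the proof of Proposition \ref{prop-hom}.
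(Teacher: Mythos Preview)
Your argument is correct and is precisely the reasoning the paper intends: the corollary is stated as ``an immediate consequence of the above discussion,'' with no explicit proof, and your write-up simply unpacks that implication via Proposition~\ref{bcprop} and the canonicity of the hyperbolic quotient structure from \cite{LeBrun1993}. Your caution about the isometry step is well placed but unnecessary here, since the paragraph preceding the corollary already records that the hyperbolic metrics on $\H^3_1$ and $\H^3_3$ are determined intrinsically by $[g_{\rm LB}]$ together with the respective semi-free $\U(1)$-actions, which is exactly what you invoke.
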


\subsection{Generators of the automorphism group}
Finally, we give generators of the full automorphism group 
${\rm{Aut}}(g_{\,\rm LB})$ in the case $n=2$.
(For $n\ge 3$ generators of ${\rm{Aut}}(g_{\,\rm LB})$ 
were already given in Theorem \ref{summarytheorem}).

\begin{proposition}
\label{Uprop}
Suppose $n=2$ and let $H\subset {\rm{Aut}}(g_{\,\rm LB})$ be as in Definition \ref{def:H}, 
and let ${\rm{Aut}}_0(g_{\,\rm LB})\,(\simeq K)$ be the identity 
component of ${\rm{Aut}}(g_{\,\rm LB})$. Then we have: 
(i) The subgroup $H$ is generated by ${\rm{Aut}}_0(g_{\,\rm LB})$ 
and two involutions.
(ii) ${\rm{Aut}}(g_{\,\rm LB})$ is generated by $H$ and an 
involution $\Lambda$ not belonging to $H$.
\end{proposition}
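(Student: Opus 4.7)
The plan is to reduce both parts to the complete enumeration of conformal automorphisms established in Propositions~\ref{prop-key} and~\ref{prop-key2}, together with the involutions already constructed earlier in the paper. By Proposition~\ref{prop-key}, the subgroup $H$ consists of exactly four tori, one of which is ${\rm{Aut}}_0(g_{\LB}) \simeq K$; thus $H / {\rm{Aut}}_0$ has order four.

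For part (i), I would first identify this order-four component group explicitly. Multiplying representative matrices of the four types listed in Proposition~\ref{prop-key} (using block form \eqref{U1'}, combining the two choices of $A_{22}$ with diagonal versus off-diagonal $A_{11}, A_{33}$), a short calculation shows that the square of every non-identity coset is trivial; hence $H/{\rm{Aut}}_0 \cong \ZZ_2 \oplus \ZZ_2$. Theorem~\ref{summarytheorem} (case \eqref{i3}) already provides honest involutions $\Phi_1, \Phi_2, \Phi_3 \in H$ with $\Phi_2 = \Phi_1 \Phi_3$, forming a $\ZZ_2 \oplus \ZZ_2$ subgroup. Checking that their cosets exhaust $H/{\rm{Aut}}_0$ (e.g.\ by matching $\Phi_3$, which is a lift of a reflection swapping $\Sigma_1$ and $\Sigma_3$, to the Type~2 class in Proposition~\ref{prop-key}, and $\Phi_1$ to the Type~4 class) then shows that any two of $\Phi_1, \Phi_2, \Phi_3$, together with ${\rm{Aut}}_0$, generate $H$.

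For part (ii), Proposition~\ref{prop-key2} supplies precisely four additional tori of automorphisms not lying in $H$, so ${\rm{Aut}}(g_{\LB})$ has exactly eight connected components and $[{\rm{Aut}}(g_{\LB}) : H] = 2$. I take $\Lambda := \tilde{\Lambda}(0)$, the involution constructed in Section~\ref{extra}. Its associated projective matrix is (up to rescaling) the matrix \eqref{iota}, which is visibly of Case~(II) form and so does not belong to $H$. A conceptually cleaner verification uses Proposition~\ref{bcprop}: $\tilde{\Lambda}(0)$ interchanges the two semi-free fibrations $\pi_1$ and $\pi_3$, whereas every element of $H$ preserves each fibration individually (Proposition~\ref{prop-key3} and the ensuing discussion). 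Since $\Lambda \notin H$ and the quotient ${\rm{Aut}}(g_{\LB})/H$ has order two, we conclude ${\rm{Aut}}(g_{\LB}) = \langle H, \Lambda \rangle$.

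The main technical obstacle is confirming that $H/{\rm{Aut}}_0 \cong \ZZ_2 \oplus \ZZ_2$ rather than $\ZZ_4$, since otherwise a single generator would suffice in part~(i). This reduces to a direct block-matrix computation using the four normal forms in Proposition~\ref{prop-key}, but the bookkeeping must be done carefully. Beyond that, everything is assembly: the existence of honest involutions (as opposed to elements of order two only in the quotient) is guaranteed by the square-root-in-$K$ trick already used in the proof of Theorem~\ref{summarytheorem}, and the distinguishing of $\Lambda$ from elements of $H$ is immediate from its exchange of the two fibrations.
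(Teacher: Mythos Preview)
Your approach is essentially the paper's: both parts reduce to the component count in Propositions~\ref{prop-key} and~\ref{prop-key2}, and one exhibits explicit involutions hitting the non-identity cosets. The paper writes down the $6\times 6$ matrices $\Lambda_1,\Lambda_2$ directly (see \eqref{iota1and2}) rather than invoking the geometric lifts $\Phi_j$, but this is only a cosmetic difference for part~(i).

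There is one logical ordering problem in part~(ii). You take $\Lambda=\tilde\Lambda(0)$ and appeal to Proposition~\ref{bcprop} or to the matrix \eqref{iota}; but at this point in the paper the conformality of $\tilde\Lambda(\vartheta)$ has \emph{not} yet been proved---that is precisely the content of Theorem~\ref{final}, which comes after Proposition~\ref{Uprop} (and which in turn relies on the explicit matrix generators just produced). So invoking $\tilde\Lambda(0)\in{\rm Aut}(g_{\LB})$ here is circular. The paper avoids this by working with the matrix $\Lambda$ of \eqref{iota} directly: its lift to the twistor space was already verified inside the proof of Proposition~\ref{prop-key2}, so it is known to represent a genuine conformal automorphism, and it is visibly of off-diagonal type hence outside $H$. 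A small related slip: by Theorem~\ref{final} the matrix \eqref{iota} corresponds to $\tilde\Lambda(\pi/2)$, not $\tilde\Lambda(0)$. The fix is simply to replace $\tilde\Lambda(0)$ by the twistor involution $\Lambda$ of \eqref{iota} throughout your argument for~(ii); everything else you wrote then goes through.
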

\begin{proof}
This is easy since we have explicit representation of 
${\rm{Aut}}(g_{\,\rm LB})$ as $6\times6$ matrices.
For (i), as the two involutions in $H$ we choose the ones
 represented by the following matrices
\begin{align}\label{iota1and2}\Lambda_1:=
\begin{pmatrix}
1 & &&&&\\
&1&&&&\\
&&1&&&\\
&&&-1&&\\
&&&&0&1\\
&&&&1&0
\end{pmatrix}
{\text{ and }}\ 
\Lambda_2:=
\begin{pmatrix}
0 &1 &&&&\\
1&0&&&&\\
&&1&&&\\
&&&-1&&\\
&&&&1&\\
&&&&&1
\end{pmatrix},
\end{align}
where a blank entry means $0$.
It is readily seen that $\Lambda_1^2=\Lambda_2^2=I$, 
$\Lambda_1$ and $\Lambda_2$ belong to mutually different non-identity connected 
components of $H$, and that the product $\Lambda_1\Lambda_2$ belongs to the 
remaining connected component of $H$. This means that the identity 
component and $\Lambda_1$ and $\Lambda_2$ generate the subgroup $H$.
Hence we obtain (i). Note that these corresponds to the transformations 
described in Theorem \ref{summarytheorem}. 

For (ii) we choose the involution $\Lambda$ given in \eqref{iota}.
As in the proof of Propositions \ref{prop-key}, $\Lambda$ defines an involution on 
the twistor space $Z$.
Since $\Lambda$ is off-diagonal type, we have $\Lambda\not\in H$. 
Further it is elementary by using Propositions \ref{prop-key} and
\ref{prop-key2} to show that for any one of the other $3$ components of 
${\rm{Aut}}(g_{\,\rm LB}) \setminus H$, we can find an element 
$U\in H$ for which the product $U\cdot \Lambda$ belongs to that component.
This means that $H$ and $\Lambda$ generate ${\rm{Aut}}(g_{\,\rm LB})$.
\end{proof}
The following proposition completes the proof of 
Theorem \ref{summarytheorem} above. 
\begin{proposition}
\label{D4prop}
As before, let ${\rm{Aut}}_0$ 
be the identity component  of \,${\rm{Aut}}_0(g_{\rm{LB}})$, 
which is obviously a normal subgroup of $H$.
Then the quotient group $H/ {\rm{Aut}}_0$ 
is isomorphic to $\mathbb Z_2\times\mathbb Z_2$. 
Moreover, the 
quotient ${\rm{Aut}}/ {\rm{Aut}}_0$ is isomorphic to 
${\rm{D}}_4$ (the dihedral group of order $8$).
\end{proposition}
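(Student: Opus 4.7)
The plan is to extract the group structure directly from the explicit matrix generators supplied by Proposition \ref{Uprop}. For the first claim, $H/{\rm{Aut}}_0$ is generated by the classes of the two involutions $\Lambda_1$ and $\Lambda_2$ from \eqref{iota1and2}. Reading off from Proposition \ref{prop-key}, one sees that $\Lambda_1$ lies in the component with $A_{22}={\rm{diag}}(1,-1)$, $A_{11}$ diagonal, $A_{33}$ off-diagonal, while $\Lambda_2$ lies in the component with $A_{22}={\rm{diag}}(1,-1)$, $A_{11}$ off-diagonal, $A_{33}$ diagonal, so their classes are distinct non-trivial elements of the $4$-element quotient. I would then check by direct block multiplication that $\Lambda_1$ and $\Lambda_2$ already commute as $6\times 6$ matrices: the only place where they simultaneously act non-trivially is the middle $\{z_2,z_3\}$-block, where both equal ${\rm{diag}}(1,-1)$, which squares to $I_2$. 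A group of order $4$ generated by two commuting elements of order $2$ that lie in distinct non-trivial classes is isomorphic to $\mathbb{Z}_2\times\mathbb{Z}_2$.

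For the $D_4$ claim, by Proposition \ref{Uprop}(ii) the quotient ${\rm{Aut}}/{\rm{Aut}}_0$ has order $8$ and contains $H/{\rm{Aut}}_0\simeq\mathbb{Z}_2\times\mathbb{Z}_2$ as a subgroup of index $2$. Among groups of order $8$ containing a Klein four subgroup, only the three possibilities $\mathbb{Z}_2\times\mathbb{Z}_2\times\mathbb{Z}_2$, $\mathbb{Z}_4\times\mathbb{Z}_2$, and $D_4$ occur ($\mathbb{Z}_8$ and $Q_8$ each contain a unique element of order $2$), so it suffices to show the quotient is non-abelian. Using $\Lambda$ from \eqref{iota} as a representative of the non-trivial coset outside $H$, the identity $\Lambda^2=\alpha\beta\,I$ forces $\Lambda$ to descend to an involution in ${\rm{Aut}}/{\rm{Aut}}_0$. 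The key step is then to compute $\Lambda\Lambda_1\Lambda^{-1}=(\alpha\beta)^{-1}\Lambda\Lambda_1\Lambda$ by block multiplication, and to verify that, after an overall projective rescaling and multiplication by ${\rm{diag}}(1,1,1,1,-1,-1)\in{\rm{Aut}}_0$, the result coincides with $\Lambda_2$. Once this swapping relation is established, the quotient is a semi-direct product $(\mathbb{Z}_2\times\mathbb{Z}_2)\rtimes\mathbb{Z}_2$ with the outer factor interchanging the two generators, which is precisely $D_4$; equivalently, $(\Lambda\Lambda_1)^2\equiv\Lambda_2\Lambda_1\pmod{{\rm{Aut}}_0}$, which is the third non-trivial class in $\mathbb{Z}_2\times\mathbb{Z}_2$, so $\Lambda\Lambda_1$ has order exactly $4$, distinguishing $D_4$ from the two abelian possibilities.

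The main obstacle is verifying $\Lambda\Lambda_1\Lambda^{-1}\equiv\Lambda_2\pmod{{\rm{Aut}}_0}$. This is a finite but somewhat tedious block computation with $6\times 6$ matrices, which I would back up by the geometric observation from Proposition \ref{bcprop}: $\Lambda$ interchanges the two semi-free circle actions $K_1$ and $K_3$, and hence swaps the two quotient fibrations $\pi_1$ and $\pi_3$ in \eqref{df}. The subgroup $H$ acts by hyperbolic isometries on each base space $\mathbb{H}^3_1$ and $\mathbb{H}^3_3$ separately, via the two homomorphisms produced just before Proposition \ref{bcprop}, so conjugation by $\Lambda$ must carry an element acting as a reflection on $\mathbb{H}^3_1$ to one acting as the corresponding reflection on $\mathbb{H}^3_3$. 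This geometric picture forces the swap of $[\Lambda_1]$ and $[\Lambda_2]$ in the quotient and pins down the $D_4$ structure.
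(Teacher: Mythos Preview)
Your proposal is correct and follows essentially the same logical skeleton as the paper: the quotient has order $8$, contains $H/\mathrm{Aut}_0\simeq\mathbb{Z}_2\times\mathbb{Z}_2$, and is non-abelian, which forces $D_4$. The paper's argument is considerably more terse than yours: for $H/\mathrm{Aut}_0$ it simply points to the explicit matrix forms in Proposition~\ref{prop-key}, and for the $D_4$ step it asserts non-abelianness directly from the matrices in Proposition~\ref{prop-key2}, then eliminates $Q_8$ in one line (since $Q_8$ contains no Klein four subgroup). Your explicit conjugation computation $\Lambda\Lambda_1\Lambda^{-1}\equiv\Lambda_2$ and the accompanying geometric interpretation via the swap of the two semi-free fibrations are correct and give more structural insight (the semidirect product presentation), but are not needed for the bare statement---the paper bypasses all of this by the single observation that a non-abelian group of order $8$ containing $\mathbb{Z}_2\times\mathbb{Z}_2$ must be $D_4$.
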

\begin{proof}
The former claim readily follows from the explicit form of the 
matrices $U$ in Proposition \ref{prop-key}.
For the second claim, we first note that the group is non-Abelian, 
by the explicit form of the matrices $U$ in Proposition \ref{prop-key2}.
Therefore, it is isomorphic to either the quaternion group 
(the subgroup generated by $i$ and $j$ in the quaternions), 
or the dihedral group ${\rm{D}}_4$. But the former group cannot contain 
a subgroup which is isomorphic to $\mathbb Z_2\times\mathbb Z_2$. 
Therefore ${\rm{Aut}}/ {\rm{Aut}}_0$ is isomorphic to ${\rm{D}}_4$.
\end{proof}

\subsection{Einstein-Weyl spaces}
We end this section by reconciling the automorphisms 
found using twistor theory with the automorphisms given 
in Theorem \ref{summarytheorem}, and also proving that 
$\tilde{\Lambda}(\vartheta)$ defined in Section \ref{explicit}
is indeed a conformal map.  To do this, we need to study more 
closely the associated Einstein-Weyl spaces of the $G_1$ and 
$G_3$ actions on the twistor space. 
Recall that in the proof of Lemma \ref{lemma-inv7}, we
defined two linear projections $f_j:\mathbb{CP}^5\to\mathbb{CP}^3$ 
$(j=1,3)$ whose restriction to $\tilde{Z}$ can be viewed as the 
quotient map with respect to the $G_i$-action.
Also recall that the images $f_j(\tilde{Z})$ are non-singular 
quadrics whose equations are given by
\begin{align}
\label{image1'}
f_1(\tilde{Z})=\{\alpha^2 z_2^2+z_3^2+2w_4w_5=0\},\,\,
f_3(\tilde{Z})=\{2w_0w_1+\beta^2z_2^2+z_3^2=0\}.
\end{align}

For fibers of $f_1$ and hyperplane sections of the image $f_1(\tilde{Z})$, 
we have the following 
\begin{lemma}\label{lemma-images}
(i) The closures of general fibers of $f_1$  are smooth conics.
(ii) If $h$ is a $G_3$-invariant plane in $\CP^3$, 
the inverse image $f_1^{-1}(h)$ is reducible if and only if 
$h=\{z_2=\pm \i z_3/\alpha\}$ or $\{z_2=\pm \i z_3/\beta\}$
\end{lemma}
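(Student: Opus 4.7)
The plan is to exploit the explicit coordinate description of $f_1$ as a linear projection from the line $\ell=\{z_2=z_3=w_4=w_5=0\}$, which meets $\tilde Z$ only at the two nodes $P_1,\ol P_1$. For (i) and any $p=[z_2^0:z_3^0:w_4^0:w_5^0]\in f_1(\tilde Z)$, I parametrize the $\CP^2$-fiber of $f_1$ over $p$ by $[w_0:w_1:s]\mapsto[w_0:w_1:sz_2^0:sz_3^0:sw_4^0:sw_5^0]$ and substitute into the two defining quadrics of $\tilde Z$ to get
\begin{align*}
w_0w_1+A(p)s^2=0,\qquad 2w_0w_1+B(p)s^2=0,
\end{align*}
with $A=z_2^{0,2}+z_3^{0,2}+w_4^0w_5^0$ and $B=\lambda z_2^{0,2}+(3/2)z_3^{0,2}+w_4^0w_5^0$. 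Using the equation $\alpha^2z_2^2+z_3^2+2w_4w_5=0$ for $f_1(\tilde Z)$, a one-line computation gives $B\equiv 2A$ modulo this relation, so the fiber collapses to the single conic $w_0w_1+A(p)s^2=0$. This is smooth iff $A(p)\ne0$, and on $f_1(\tilde Z)$ one has $A=(\lambda-1)z_2^2+(1/2)z_3^2$, which (since $\lambda>1$) is generically nonzero, proving (i).

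For (ii), since $G_3$ acts on $\CP^3$ by $(z_2,z_3,w_4,w_5)\mapsto(z_2,z_3,tw_4,t^{-1}w_5)$ its invariant linear forms are spanned by $z_2$ and $z_3$, so every $G_3$-invariant plane has the form $h=\{az_2+bz_3=0\}$ for some $[a:b]\in\CP^1$. The preimage $f_1^{-1}(h)\cap\tilde Z$ is the complete intersection of $Q_0$ and $Q_\infty$ inside the hyperplane $h\simeq\CP^4\subset\CP^5$, and after substituting $z_3=-(a/b)z_2$ (treating $b=0$ separately) the restricted pair becomes
\begin{align*}
w_0w_1+cz_2^2+w_4w_5=0,\qquad 2w_0w_1+dz_2^2+w_4w_5=0,
\end{align*}
with $c=1+(a/b)^2$ and $d=\lambda+(3/2)(a/b)^2$. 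I would then examine the pencil $\mu_1Q_\infty|_h+\mu_2Q_0|_h$, whose coefficient matrix is block-diagonal in $\{w_0,w_1\}$, $\{z_2\}$, $\{w_4,w_5\}$. A short case analysis shows that a nontrivial rank-$\le 2$ member exists precisely when $d=2c$ (giving $-w_4w_5=0$, obtained from $Q_0-2Q_\infty$) or $d=c$ (giving $w_0w_1=0$, obtained from $Q_0-Q_\infty$). Solving yields $(a/b)^2=-\alpha^2$ and $(a/b)^2=-\beta^2$ respectively, equivalently $a/b=\pm\i\alpha$ or $\pm\i\beta$; these are exactly the four planes in the statement.

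For each of these four planes, the vanishing of $w_4w_5$ (respectively $w_0w_1$) on the section forces the explicit decomposition $D'_j\cup\ol D'_j$ already recorded in \eqref{d1-1}--\eqref{d1-10}. For any other $G_3$-invariant plane (including the boundary case $b=0$, which is easily handled by direct inspection) the pencil contains no nontrivial rank-$\le 2$ quadric, so by the classical criterion on complete intersections of two quadrics in $\CP^4$ the section is irreducible. The main obstacle is really just bookkeeping in the pencil analysis: one must exhaustively check that the only way for two of the three block coefficients $\mu_1+2\mu_2$, $\mu_1c+\mu_2d$, $\mu_1+\mu_2$ to vanish simultaneously for some nontrivial $(\mu_1,\mu_2)$ is the two cases above.
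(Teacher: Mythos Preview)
Your argument is correct and supplies what the paper deliberately omits: the authors write only ``Since everything is explicit, we omit a proof of the lemma.'' The fiber computation in (i) and the pencil/rank analysis in (ii) are both sound, and the solutions $(a/b)^2=-\alpha^2,\,-\beta^2$ are exactly right.

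One small caution on (ii): when you assert that every $G_3$-invariant plane has the form $\{az_2+bz_3=0\}$, you are implicitly reading ``$G_3$-invariant'' as ``defined by a $G_3$-invariant linear form.'' If one reads it instead as ``setwise $G_3$-stable,'' there are two further planes $\{w_4=0\}$ and $\{w_5=0\}$ (cut out by semi-invariants of weight $\pm1$), and for these $f_1^{-1}(h)$ is \emph{also} reducible: one finds $\alpha^2z_2^2+z_3^2=0$ on the section, which splits it as $D'_1\cup D'_2$ (resp.\ $\ol D'_1\cup\ol D'_2$). Since the lemma as stated does not list these two planes, your reading is evidently the intended one, and for the application (locating the monopole points on the $b$-axis of $\mathcal B(\alpha)$) only the weight-zero family matters anyway; but the phrasing is mildly ambiguous.

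Your appeal to ``the classical criterion'' for the converse direction is legitimate here and can be justified in one line: any irreducible non-degenerate surface in $\CP^4$ has degree $\ge 3$, so a degree-$2$ component of the section must lie in a hyperplane, and then the product of the two hyperplane forms is a degree-$2$ element of the ideal of the complete intersection, hence lies in the pencil and has rank $\le 2$. Alternatively, when $(c-d)(d-2c)\neq 0$ one checks irreducibility directly: on the chart $z_2=1$ the section is $\{w_0w_1=c-d\}\times\{w_4w_5=d-2c\}\cong\CC^*\times\CC^*$, which is irreducible, and its projective closure is the whole section.
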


Since everything is explicit, we omit a proof of the lemma.
Of course, an analogous result holds for the other quotient map $f_3$.
We also note that the three involutions on $\CP^5$
determined by the matrices $\Lambda_1, \Lambda_2$ (defined in 
\eqref{iota1and2}), and $\Lambda_3:=\Lambda_2  \Lambda_1$ 
naturally descend to the target space for both of the quotient maps.

By \cite[Section 7]{LeBrun1991}, the minitwistor lines of these 
minitwistor spaces are precisely the hyperplane sections 
$h\cap f_j(\tilde{Z})$, where the plane $h$ satisfies

\vspace{1mm}
\noindent
(A) $h$ is real with respect to the naturally induced real 
structure on $\CP^3$ (so that the real locus on $h$ is 
necessarily $\RP^2$).

\noindent
(B) $h\cap f_j(\tilde{Z})$ does not contain a real point.

\vspace{1mm}\noindent
In other words, the 3-dimensional Einstein-Weyl space  
appears as the parameter space of these planes.
In particular, since the  involutions $\Lambda_1,\Lambda_2$ 
and $\Lambda_3$ naturally induce those on $\CP^3$ 
as above, these also induce involutions on $\H^3$,
which we denote by $\phi_1, \phi_2, \phi_3$, respectively. 
For the purpose of writing these down in explicit form, next we  
determine all the  planes $h$ satisfying (A) and (B):
\begin{lemma}\label{lemma-hball}
(i) Any  plane in  $\CP^3$ having $(z_2,z_3,w_4,w_5)$ 
as  homogeneous 
coordinates as in \eqref{proj1} satisfying the above 
conditions (A) and (B), is of the form
\begin{align}\label{r-planes}
z_2=\i bz_3+cw_4-\ol cw_5,
\end{align}
where $b\in \RR,\,c\in \CC$ satisfy the following inequality:
\begin{align}\label{hball}
b^2+2|c|^2<\frac{1}{\alpha^2}.
\end{align}
(ii) Alternatively any  plane in  $\CP^3$ having $(w_0,w_1,z_2,z_3)$ 
as  homogeneous 
coordinates as in \eqref{proj2} satisfying the 
conditions (A) and (B), is either of the form
\begin{align}\label{r-planes3}
z_2=\i b'z_3+c'w_0+\ol c'w_1,
\end{align}
where $b'\in \RR,\,c'\in \CC$ satisfy the inequality
\begin{align}\label{hball3}
(b')^2-2|c'|^2>\frac{1}{\beta^2},
\end{align}
or otherwise of the form
\begin{align}\label{hball3'}
z_3=cw_0-\ol c w_1,
\end{align}
where $c\in \CC$ satisfies $|c|^2<1/2$.
\end{lemma}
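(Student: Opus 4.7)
The plan is in two steps: first use the real structure alone to classify the real planes (condition (A)), then translate the ``no real intersection'' condition (B) into a sharp inequality via (ordinary or Lorentzian) Cauchy--Schwarz.

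For part (i), I would write a plane as $\{A z_2 + B z_3 + C w_4 + D w_5 = 0\}$ and impose $L \circ \sigma_1 = \lambda \ol L$ for some $\lambda \in \CC^*$, where $\sigma_1(z_2,z_3,w_4,w_5) = (\ol z_2, -\ol z_3, -\ol w_5, -\ol w_4)$ is the real structure induced on this $\CP^3$ by \eqref{rs1}. Comparing coefficients forces $|\lambda| = 1$, and after normalization the case $A \neq 0$ gives exactly the family \eqref{r-planes} with $b \in \RR$, $c \in \CC$; a brief case analysis on $A = 0$ shows every such real plane automatically meets the real locus of $f_1(\tilde{Z})$, so those are already excluded by (B). To impose (B) itself, I would parametrize the real locus of $f_1(\tilde{Z}) = \{\alpha^2 z_2^2 + z_3^2 + 2 w_4 w_5 = 0\}$ by $z_2 = x$, $z_3 = \i y$, $w_4 = u + \i v$, $w_5 = -u + \i v$ with $(x,y,u,v) \in \RR^4$, reducing it to $\alpha^2 x^2 - y^2 - 2(u^2+v^2) = 0$. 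Substituting the plane equation $x = -by + 2 c_1 u - 2 c_2 v$ (with $c = c_1 + \i c_2$) yields the real quadratic form
\begin{equation*}
Q(y,u,v) = \alpha^2 \bigl( b y - 2 c_1 u + 2 c_2 v \bigr)^2 - (y^2 + 2 u^2 + 2 v^2)
\end{equation*}
on $\RR^3$, whose (negative) definiteness is equivalent to (B). The linear form $\ell := b y - 2 c_1 u + 2 c_2 v$ is the Euclidean inner product with weights $(1,2,2)$ of $(y,u,v)$ and $(b, -c_1, c_2)$, so ordinary Cauchy--Schwarz gives $\ell^2 \le (b^2 + 2 |c|^2)(y^2 + 2 u^2 + 2 v^2)$ with equality attained; hence $Q$ is negative definite precisely when $\alpha^2 ( b^2 + 2 |c|^2 ) < 1$, which is \eqref{hball}.

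Part (ii) is parallel, but the real locus of $f_3(\tilde{Z})$, after the analogous parametrization $w_0 = u + \i v$, $w_1 = u - \i v$, $z_2 = x$, $z_3 = \i y$, takes the Lorentzian shape $2(u^2+v^2) + \beta^2 x^2 - y^2 = 0$. The linear-algebra step first produces two families of real planes: those whose coefficient of $z_2$ is nonzero normalize to \eqref{r-planes3}, while those whose coefficient of $z_2$ vanishes reduce (after excluding the handful of real planes that automatically contain real points on $f_3(\tilde{Z})$) to \eqref{hball3'}. Substituting \eqref{r-planes3} into the real locus yields $\tilde{Q}(y,u,v) = \beta^2 \ell^2 - (y^2 - 2(u^2+v^2))$ with $\ell = b' y - 2 c_1' u + 2 c_2' v$. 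The main (and expected hardest) step is the Lorentzian analog of the Cauchy--Schwarz argument above: here $\ell$ is the Lorentzian inner product of signature $(1,-2,-2)$ of $(y,u,v)$ and $Y = (b', c_1', -c_2')$, and the reverse Cauchy--Schwarz inequality for Lorentz vectors gives $\ell^2 \ge \bigl((b')^2 - 2|c'|^2\bigr) \bigl(y^2 - 2(u^2+v^2)\bigr)$ whenever both vectors are timelike in the same cone. Separate direct checks on the null cone (where $\tilde{Q} = \beta^2 \ell^2$ vanishes only if $X = 0$, provided $Y$ is timelike) and on the spacelike region (where $\tilde{Q} > 0$ trivially) then show that $\tilde{Q}$ is positive definite on $\RR^3 \setminus \{0\}$ exactly when $(b')^2 - 2 |c'|^2 > 1/\beta^2$, which is \eqref{hball3}. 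Finally, substituting \eqref{hball3'} into the real locus gives $y = 2(c_2 u + c_1 v)$, and ordinary Cauchy--Schwarz on $(u,v)$ together with the necessary constraint $y^2 \ge 2(u^2+v^2)$ (needed so that $\beta^2 x^2 \ge 0$) shows that the only real solution is trivial precisely when $|c|^2 < 1/2$.
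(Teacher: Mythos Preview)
Your argument is correct and follows the same overall strategy as the paper: classify the real planes using the induced real structure, rule out the degenerate families (where the leading coefficient vanishes) by exhibiting explicit real points, then substitute the surviving family into the quadric and decide definiteness of the resulting real quadratic form. The only genuine difference is in that last step: the paper carries out an explicit completion of the square (displayed for part~(i), omitted for part~(ii)), whereas you recognise the quadratic form as $\alpha^2\ell^2$ minus a weighted Euclidean norm (respectively $\beta^2\ell^2$ minus a Lorentzian norm) and invoke the ordinary (respectively reverse) Cauchy--Schwarz inequality. Your route is a little cleaner, particularly for part~(ii) where the Lorentzian reverse Cauchy--Schwarz together with the timelike/null/spacelike trichotomy replaces a second messy square-completion; just be sure to record the converse direction explicitly (take $X=Y$ to saturate the inequality) so that ``exactly when'' is fully justified.
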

\begin{proof}
Since the real structure on $\CP^3$ is given by 
\begin{align}\label{rson1}
(z_2,z_3,w_4,w_5)\mapsto (\ol z_2,-\ol z_3,-\ol w_5,-\ol w_4),
\end{align}
a plane $h=\{az_2+bz_3+cw_4+dw_5=0\}$ is real if and 
only if $a\in \RR, b\in \i \RR, d=-\ol c$.
It can be verified by simple computations that if $a=0$, 
$h\cap f_1(\tilde{Z})$ always contains real points. Hence we may suppose
\begin{align}\label{bc}
h=\{z_2=\i bz_3+cw_4-\ol cw_5\},\,\,b\in \RR,\,\,c\in \CC.
\end{align}
Substituting into \eqref{image1'},  putting $w_5=-\ol w_4$ and 
replacing $z_3$ by $iz_3$ using the reality requirement, the 
condition (B) is equivalent to the condition that the equation
\begin{align}
\alpha^2(-bz_3+cw_4+\ol cw_4)^2-z_3^2-|w_4|^2=0
\end{align} 
has no solution in $(z_3,w_4)\in \RR \times \CC$.
If we write $c=c_1+\i c_2$ and $w_4=x+\i y$, the left hand side can 
be seen to be equal to 
\begin{align}
(\alpha^2 b^2-1)\left(z_3-\frac{2\alpha^2b}{\alpha^2 b^2-1}(c_1x-c_2y)\right)^2
\notag\\
-2\,\frac{2\alpha^2c_1^2+\alpha^2b^2-1}{\alpha^2 b^2-1}
\left(x-\frac{2\alpha^2c_1c_2}{2\alpha^2c_1^2+\alpha^2 b^2-1}y\right)^2\notag\\
-2\,\frac{2\alpha^2c_1^2+2\alpha^2c_2^2+\alpha^2 b^2-1}
{2\alpha^2c_1^2+\alpha^2 b^2-1}y^2.\label{square1}
\end{align}
The condition is equivalent to the definiteness of \eqref{square1}, 
viewed as a real quadratic form of $(z_3,x,y)$.
If this is positive definite, we have $\alpha^2b^2-1>0$ from the 
first term. 
But then the coefficient of $y^2$ necessarily becomes negative, 
contradicting the definiteness.
Hence \eqref{square1} must be negative definite. Hence we have 
$\alpha^2b^2-1<0$.
Then looking the coefficient of the second square, we obtain 
$ 2\alpha^2c_1^2+\alpha^2b^2-1<0$. Then by negativity of the 
coefficient of $y^2$, we obtain 
$2\alpha^2c_1^2+2\alpha^2c_2^2+\alpha^2 b^2-1<0$. 
Conversely, if this last equality holds, all of the three 
coefficients are easily seen to be negative. Thus the quadratic 
form \eqref{square1} is definite if and only 
$2\alpha^2c_1^2+2\alpha^2c_2^2+\alpha^2 b^2-1<0$. 
This is equivalent to \eqref{hball}, and we obtain (i).

The claim (ii) can be argued in a similar way, as long as we notice 
that the real structure on $\mathbb{CP}^3$ with the coordinates 
$(w_0,w_1,z_2,z_3)$ is given by 
$(w_0,w_1,z_2,z_3)\mapsto (\ol w_1,\ol w_0,\ol z_2,-\ol z_3)$, 
which is slightly different form from \eqref{rson1}.
We omit the details of the computations, as they are
similar to the above. 
\end{proof}
The region defined by \eqref{hball} is an ellipsoid, which 
we will denote by $\mathcal{B}(\alpha)$.
Although the region defined by \eqref{hball3} is disconnected, 
it becomes connected by adding the last disc $\{|c|^2<1/2\}$, 
and we will denote this connected region by $\tilde{\mathcal{B}}(\beta)$. 
Lemma \ref{lemma-hball} says that the planes satisfying (A) and (B) 
are parameterized by the ellipsoid $\mathcal{B}(\alpha)$, 
for $f_1(\tilde Z)$, and by the region $\tilde{\mathcal{B}}(\beta)$ 
for $f_3(\tilde Z)$. If we think of the 
Einstein-Weyl space as the space of real hyperplane sections of the 
minitwistor space, these regions naturally appear for the two 
semi-free $\U(1)$-actions, rather than the 
upper-half space model, as long as we adopt the present coordinates.
By \cite[Theorem 2]{LeBrun1993}, an Einstein-Weyl structure 
is naturally induced on these regions and it is precisely the hyperbolic 
structure. Using this, it is now easy to explicitly write down the three 
involutions $\phi_1,\phi_2$ and $\phi_3$ on the Einstein-Weyl space 
$\mathcal{B}(\alpha)$ (with respect to $G_1$):
\begin{lemma}\label{lemma-fl2}
For $(b,c)\in\mathcal{B}(\alpha)$, we have
\begin{align}
\label{phirep}
\phi_1(b,c)=(-b,-\ol c), \ \ \phi_2(b,c)=(-b,c), \ \ \phi_3(b,c)=(b,-\ol c).
\end{align}
Furthermore, the image of the two isolated fixed points of 
the $K_1$-action on $2 \# \CP^2$ 
(the monopole points) under the quotient map to $\mathcal{B}(\alpha)$ 
are given by $(b,c)=(\pm1/\beta,0)$. The images of the two isolated 
fixed points of the $K_3$-action are given by $(b',c')=(\pm1/\alpha,0)$.
\end{lemma}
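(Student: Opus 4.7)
The plan is to prove the two halves of the lemma by explicit computation on the $G_1$-quotient minitwistor space $f_1(\tilde Z)\subset \CP^3$, using the parametrization of real hyperplane sections by $(b,c)\in\mathcal B(\alpha)$ from Lemma \ref{lemma-hball}(i). The common observation is that each $\Lambda_j$ ($j=1,2,3$) belongs to $H$ by Proposition \ref{prop-key}, hence preserves $G_1$, and therefore descends through $f_1$ to an involution of $\CP^3$ whose action on the coordinates $(z_2,z_3,w_4,w_5)$ is read off from the lower-right $4\times 4$ block of the matrix $\Lambda_j$.

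For \eqref{phirep} I will pull back the defining equation $z_2=\i b z_3+cw_4-\ol c w_5$ of $h_{b,c}$ by each descended $\Lambda_j^*$ and compare with the canonical form of $h_{b',c'}$. Reading off the block actions, $\Lambda_1^*$ sends $(z_2,z_3,w_4,w_5)$ to $(z_2,-z_3,w_5,w_4)$ while $\Lambda_2^*$ sends it to $(z_2,-z_3,w_4,w_5)$; a direct linear substitution yields $\phi_1(b,c)=(-b,-\ol c)$ and $\phi_2(b,c)=(-b,c)$, and the relation $\Lambda_3=\Lambda_2\Lambda_1$ (hence $\phi_3=\phi_2\circ\phi_1$) gives $\phi_3(b,c)=(b,-\ol c)$.

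For the location of the $K_1$-monopole points, I will identify them with the two irreducible components of the discriminant locus of the quotient map $f_1:\tilde Z\dashrightarrow f_1(\tilde Z)$ --- the $n=2$ analogue of the correspondence in Proposition \ref{prop-LB10}(iii). The discriminant is computed directly: writing a fiber of $f_1|_{\tilde Z}$ over $q=(z_2,z_3,w_4,w_5)\in f_1(\tilde Z)$ in homogeneous coordinates $(s_0,s_1,t)$ on the projective plane spanned by $q$ and the pair $P_1,\ol P_1$, the two quadrics defining $\tilde Z$ collapse to the single equation $s_0s_1+t^2(z_2^2+z_3^2+w_4w_5)=0$, a smooth conic that degenerates to two lines precisely over $\{z_2^2+z_3^2+w_4w_5=0\}\cap f_1(\tilde Z)$. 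Eliminating $w_4w_5$ between this and the defining equation of $f_1(\tilde Z)$ gives $z_3^2=-\beta^2 z_2^2$, so the discriminant is the union of two smooth real $(1,1)$-curves $\{z_3=\pm\i\beta z_2\}\cap f_1(\tilde Z)$. Rewriting each as a hyperplane of the form \eqref{r-planes} produces $c=0$ and $b=\mp 1/\beta$; the inequality $\alpha<\beta$ from Poon's constraint $3/2<\lambda<2$ ensures that these parameters lie in the interior of $\mathcal B(\alpha)$, placing the monopoles at $(\pm 1/\beta,0)$. For the $K_3$-monopoles, the symmetric argument with $f_3$ in place of $f_1$ interchanges the roles of $\alpha$ and $\beta$ and delivers $(b',c')=(\pm 1/\alpha,0)$.

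The main obstacle is distinguishing the monopoles from the other special divisors in Lemma \ref{lemma-images}(ii): the hyperplanes $\{z_2=\pm\i z_3/\alpha\}$ also produce reducible preimages in $\tilde Z$, but they sit on $\partial\mathcal B(\alpha)$ and so parametrize the two $K_1$-fixed boundary points of $\H^3_1$ rather than interior monopoles. I plan to resolve this by a limit argument: the twistor line over a monopole is one of the $G$-invariant lines $L_j$ of Lemma \ref{lemma-inv7}, which is $K_1$-fixed and collapses under $f_1\circ\Psi$ to a real $G$-fixed point $x_j\in f_1(\tilde Z)$; taking the limit of minitwistor $(1,1)$-curves along a $K_1$-orbit converging to the monopole forces the limit curve to pass through $x_j$ and to lie in the discriminant, singling out the branch $\{z_3=\pm\i\beta z_2\}\cap f_1(\tilde Z)$ and hence the answer $(\pm 1/\beta,0)$.
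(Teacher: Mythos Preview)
Your treatment of the first half (the formulas for $\phi_j$) matches the paper's exactly: both read off the induced action on hyperplanes $h_{b,c}$ from the lower-right $4\times4$ block of $\Lambda_j$.

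For the monopole locations, your direct discriminant computation is more explicit than the paper's one-line citation of Lemma~\ref{lemma-images}(ii), and it works: substituting the $f_1(\tilde Z)$-constraint into the fibre equation correctly produces $\beta^2 z_2^2+z_3^2=0$, and the identification with $(b,c)=(\pm 1/\beta,0)$ together with the interior check via $\alpha<\beta$ are fine. This has the mild advantage over the paper's route that it yields only the two $\beta$-curves directly; the paper's approach via Lemma~\ref{lemma-images}(ii) hands you four hyperplanes and then (implicitly) discards the two on $\partial\mathcal B(\alpha)$.

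Your final ``obstacle'' paragraph, however, contains a genuine error. The twistor lines over the $K_1$-monopoles are $L_3,L_4$ (the ones with $w_0=w_1=0$, hence pointwise $G_1$-fixed), and under $f_1\circ\Psi$ they do \emph{not} collapse to points: each maps isomorphically onto one of the discriminant $(1,1)$-curves $\{\beta z_2\mp\i z_3=0\}\cap f_1(\tilde Z)$ (just drop $w_0,w_1$ from the equations \eqref{tl3}--\eqref{tl4} and compare with your discriminant). It is $L_1,L_2$ --- the twistor lines over the two $K$-fixed points lying on the $K_1$-fixed sphere $\partial\H^3_1$ --- that collapse to the real $G$-fixed points $(1,\pm\i\alpha,0,0)$. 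Also, ``$K_1$-fixed boundary points of $\H^3_1$'' is a slip, since $K_1$ acts trivially on $\H^3_1$; you mean the $K_3$-fixed boundary points (the endpoints of the common geodesic). Fortunately this paragraph is dispensable: your discriminant computation already singles out the $\beta$-curves, and the interior check $1/\beta^2<1/\alpha^2$ already distinguishes them from the $\alpha$-planes, so the limit argument can simply be dropped.
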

\begin{proof}
The formulas for $\phi_j$ immediately follow from 
\eqref{r-planes} and the explicit forms of 
$\Lambda_1,\Lambda_2$, and $\Lambda_3$ on $\CP^5$.
The second statement follows from Lemma \ref{lemma-images} (ii).
\end{proof}
Since the $K_3$-action acts by isometries on $\mathcal{B}(\alpha)$, the 
fixed locus of $K_3$ must be a hyperbolic geodesic in  $\mathcal{B}(\alpha)$. 
By Lemma \ref{lemma-fl2}, this geodesic 
contains the monopole points. The formulas \eqref{phirep} then clearly 
imply that the involutions 
$\phi_j$ induced by $\Lambda_j$ correspond exactly with those in 
Theorem \ref{summarytheorem}.

In conclusion, we show that the maps $\tilde{\Lambda}(\vartheta)$ 
defined in Subsection \ref{extra} above are conformal automorphisms. 
We first define 
\begin{align}
\Lambda(\vartheta) = B(e^{i \vartheta}) \Lambda C(e^{- i \vartheta}  ),
\end{align}
recalling the diagonal matrices $B(s)$ and $C(t)$ defined in the 
proof of Proposition \ref{bcprop}. 
\begin{theorem}
\label{final} For any angle $\vartheta$, $\Lambda(\vartheta)$ is
an involution of the twistor space, which induces a conformal involution 
of $[g_{\LB}]$. The induced involution is $\tilde{\Lambda}(\vartheta + \pi/2)$, 
thus the map $\tilde{\Lambda}(\vartheta + \pi/2)$ is a conformal 
automorphism of $(2 \# \CP^2, [g_{\LB}])$. 
\end{theorem}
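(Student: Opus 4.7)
The plan is to work at the level of Poon's twistor space $Z$: I first verify that $\Lambda(\vartheta)$ is a real holomorphic involution of $Z$, and then identify the induced conformal diffeomorphism of $2\#\CP^2$ with $\tilde{\Lambda}(\vartheta+\pi/2)$ via the minitwistor geometry developed in this section. For the involution part, I use the matrix identities $\Lambda^2 = \alpha\beta\, I$, $\Lambda\, B(s) = C(s)\, \Lambda$, and $\Lambda\, C(t) = B(t)\, \Lambda$, each an elementary consequence of the explicit form of $\Lambda$ in~\eqref{iota}. Combined with the commutativity of $B(\cdot)$ and $C(\cdot)$, these identities let me compute $\Lambda(\vartheta)^2$ as a scalar, so $\Lambda(\vartheta)$ is an involution in ${\rm{PGL}}(6,\CC)$. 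Since $B(e^{\i\vartheta})$ and $C(e^{-\i\vartheta})$ lie in the compact torus $K$, they commute with the real structure~\eqref{rs1}, and $\Lambda$ does so by inspection; hence $\Lambda(\vartheta)$ commutes with~\eqref{rs1}. By Proposition~\ref{prop-key2}, $\Lambda$ lifts to the small resolution $Z$ from case $(\ast)$; the torus element $B(e^{\i\vartheta})C(e^{-\i\vartheta})$ belongs to the identity component of ${\rm{Aut}}(Z)$ and lifts tautologically, so $\Lambda(\vartheta)$ lifts to $Z$. The twistor correspondence then produces the desired conformal involution of $(2\#\CP^2, [g_{\LB}])$.

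The main content is then the identification of the induced involution with $\tilde{\Lambda}(\vartheta+\pi/2)$. Since $\Lambda\notin H$, Proposition~\ref{bcprop} shows that its induced action on $2\#\CP^2$ interchanges the two fibrations $\pi_1, \pi_3$ in~\eqref{df}. I pass to the minitwistor quotients $f_1, f_3$ from Subsection~\ref{ss:autoproj}: the map $f_1(\tilde{Z})\simeq \CP^1\times\CP^1 \to f_3(\tilde{Z})\simeq \CP^1\times\CP^1$ induced by $\Lambda$ is read off directly from~\eqref{iota}, and via the parameterization of real hyperplane sections by the Einstein--Weyl balls $\mathcal{B}(\alpha)$ and $\tilde{\mathcal{B}}(\beta)$ from Lemma~\ref{lemma-hball}, it descends to an orientation-reversing hyperbolic isometry $\H^3_1\to\H^3_3$ sending the monopole points $(\pm 1/\beta, 0)\in\H^3_1$ to $(\pm 1/\alpha, 0)\in\H^3_3$. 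By the uniqueness statement in Remark~\ref{Lunique}, this is exactly the hyperbolic isometry underlying the map $\varphi$ of~\eqref{map}, so the induced conformal involution coincides with $\tilde{\Lambda}(\vartheta')$ for some $\vartheta'$. Matching the remaining torus factor, the composition $B(e^{\i\vartheta})\cdot\Lambda\cdot C(e^{-\i\vartheta})$ corresponds to $\tilde{\Lambda}(0)$ pre- and post-composed by a $K_1\times K_3$-element $(e^{-\i(\vartheta+\pi/2)}, e^{\i(\vartheta+\pi/2)})$, giving $\vartheta' = \vartheta + \pi/2$; the constant shift of $\pi/2$ reflects the factor of $\i$ in the central block $A_{22}^-$ of $\Lambda$, which offsets the angular coordinates between the two descriptions.

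The hard part will be this explicit matching: reconciling the $6\times 6$ matrix description of $\Lambda(\vartheta)$ with the coordinate formula~\eqref{phit} for $\tilde{\Lambda}(\vartheta + \pi/2)$ is delicate because of the many sign and phase conventions implicit in~\eqref{iota} and~\eqref{rs1}. A robust shortcut is to pin the identification down at one convenient specialization --- for instance by matching the fixed $S^2$ of $\tilde{\Lambda}(0)$ from Theorem~\ref{fixedpoint} with its twistor preimage in $Z$, which is easily computed from~\eqref{iota} --- and then propagate to the full $S^1$-family by $K$-equivariance.
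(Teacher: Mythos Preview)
Your overall architecture matches the paper's: verify that $\Lambda(\vartheta)$ is a real holomorphic involution of $Z$ (easy from $\Lambda^2=\alpha\beta I$ and the intertwining relations with $B,C$), then identify the induced conformal involution with $\tilde{\Lambda}(\vartheta+\pi/2)$ via the minitwistor picture and the uniqueness of $L$. The difference is in how the identification step is executed, and this is where your proposal has a genuine gap.

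You invoke Remark~\ref{Lunique} to conclude that the induced map $\H^3_1\to\H^3_3$ is ``exactly the hyperbolic isometry underlying $\varphi$''. But Remark~\ref{Lunique} asserts uniqueness of $L$ as an orientation-reversing involution of $\H^2$, not of any map between two copies of $\H^3$. At the $\H^3$ level there is no such uniqueness: any isometry $\H^3_1\to\H^3_3$ sending the monopole pair to the monopole pair can be composed with a rotation about the geodesic through the monopole points, so knowing the $\H^3$-map up to this rotational ambiguity does not pin down the angular shift. Likewise, you assert the map is orientation-reversing without deriving it; in the paper this is a \emph{consequence} of the angular computation, not an input.

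The paper fills this gap by an explicit computation that you have essentially acknowledged but not carried out. It first passes to the full torus quotient $\H^2$ and uses Fujiki's theorem \cite[Theorem~9.1]{Fujiki2000} to show the induced map there is a hyperbolic isometry. It then computes, in the dual coordinates $(b,c)$ and $(b',c')$ on the two copies of $(\CP^3)^*$, the map induced by $\Lambda$: one finds $(b',c')\mapsto\bigl(-1/(\alpha\beta b'),\,-ic'/(\alpha b')\bigr)$, whence ${\rm Arg}(c)={\rm Arg}(c')+3\pi/2$. Since ${\rm Arg}(c)$ and ${\rm Arg}(c')$ are identified with $\theta_3$ and $\theta_1$ respectively, this yields the angular law $(\theta_1,\theta_3)\mapsto(\theta_3-\pi/2,\theta_1+\pi/2)$. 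Only then does orientation-reversal on the torus factor force orientation-reversal on $\H^2$, after which the genuine uniqueness of $L$ applies. Your heuristic that ``the factor $\i$ in $A_{22}^-$ gives the $\pi/2$'' points at the right place but is not a proof; you need the dual-space computation (or an equivalent) to make it one. Your proposed fixed-set shortcut does not circumvent this either: two distinct conformal involutions can share the same fixed $S^2$, so matching fixed loci alone does not determine the map.
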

\begin{proof}It is easy to see that $\Lambda(\vartheta)$ is also an 
involution. For the moment, let us consider only $\Lambda$. 
We first note that the involution $\Lambda$  induces a diffeomorphism 
from $\H^2$ to itself. 
To see this, we argue as follows: in the $6 \times 6$ matrix representation, 
the involution is off-diagonal type. The middle coordinates 
$(z_2, z_3)$ in Section \ref{twistor} can be regarded as a 
(homogeneous) coordinate on  the quotient space
$Z/(\CC^* \times \CC^*)\simeq\mathbb{CP}^1$, while $\H^2$ is the space of 
maximal orbits in the quotient space $2 \# \CP^2 /K$. 
By the explicit form of the matrix $\Lambda$ and the $\CC^*\times\CC^*$-action (given in \eqref{G-action1})
these involutions map $\CC^* \times \CC^*$-orbits 
to $\CC^* \times \CC^*$-orbits, which means that the involution 
is indeed a lift of some diffeomorphism of $\H^2$.

By \cite[Theorem 9.1]{Fujiki2000}, the induced involution 
on $\mathcal H^2$ must be a hyperbolic isometry. To see this,
we first note that as the coordinate $z$ in the equation (53)        
on \cite[page 276]{Fujiki2000}
is a non-homogenous coordinate on the parameter space of the pencil $|F|^K$ 
(consisting of torus-invariant members of the system $|F|$),
and since the same is true for 
the coordinate $z_3/ z_2$ of ours,
it follows that $z$ in Fujiki's paper is related to $z_3/z_2$ by a 
fractional transformation.
(It is possible to write the precise relation between
these two coordinates; 
but we do not need the explicit form).
On the other hand \cite[Theorem 9.1]{Fujiki2000} states that the coordinate $z$
can be used as a conformal coordinate on $\H^2$.
This means that any conformal automorphism of Poon's metric
on $2 \# \CP^2$ (which is of course a special form of Joyce metrics) 
induces a conformal map on $\H^2$ as long as
the automorphism descends to a map on $\H^2$.
Since the conformal group of $\H^2$ is equal 
to the isometry group, this implies the involution must be a 
hyperbolic isometry.  

We next discuss the angular transformation induced by $\Lambda$. 
The $K$-action on $\CP^5$ in \eqref{G-action1}
naturally induces $K_3\simeq K/K_1$-action on $\CP^3=\{(z_2,z_3,w_4,w_5)\}$,
which is explicitly written as 
\begin{align}\label{K3onH}
(z_2,z_3,w_4,w_5)\longmapsto (z_2,z_3,tw_4,t^{-1}w_5), \,\,\,t\in K_3.
\end{align}
This $K_3$-action naturally
induces the (dual) action on the dual space $(\CP^3)^*$.
If $(a,b,c,d)$ means the dual coordinates as before,
the action is concretely given by
$(a,b,c,d)\longmapsto (a,b,t c,t^{-1}d).$
By putting $a=1$ and using $(b,c,d)$ as non-homogeneous coordinates,
the action can be written as 
\begin{align}\label{K3onH2}
(b,c,d)\longmapsto (b,t c,t^{-1}d).
\end{align}
Then recalling $b\in\mathbb R$ and $d=-\ol c$ on the real locus, we obtain that 
the $K_3$-action on $\mathcal{B}(\alpha)$ 
is given by 
\begin{align}\label{K3onH3}
(b,c)\longmapsto (b,t c).
\end{align}
Then since this must be an isometric $\U(1)$-action on the hyperbolic space, 
and since any non-trivial isometric $\U(1)$-action must be rotations around a 
geodesic,
\eqref{K3onH3} means that ${\rm{Arg}}(c)$ can be used as a coordinate on the 
hyperbolic space $\mathcal B(\alpha)\simeq\mathcal H^3_1$.
Then  ${\rm{Arg}}(t)$ can be naturally identified with the coordinate 
$\theta_3$, where $\theta_3$ 
is the coordinate on $\U(1)\simeq K_3$ we have used throughout 
Section \ref{explicit}.
  
Similarly, replacing the role of $K_1$ and $K_3$ in the above argument,
we first obtain that $K_1\,(\simeq K/K_3)$ naturally acts on 
$\CP^3=\{(w_0,w_1,z_2,z_3)\}$ by
$(w_0,w_1,z_2,z_3)\mapsto (sw_0,s^{-1}w_1,z_2,z_3)$.
Taking the dual, we obtain the $K_1$-action on $(\CP^3)^*$ 
equipped with dual coordinates 
$(c',d',a',b')$  given by
$(c',d',a',b')\mapsto (s c',s^{-1}d',a',b')$. 
On the locus $a'\neq0$  if we use $(b',c',d')$ as non-homogenous coordinates 
by putting $a'=1$, 
the action is written  as $(b',c',d')\mapsto (b',s c',s^{-1} d')$.
Therefore ${\rm{Arg}} (s)$ can be naturally identified with the 
coordinate $\theta_1$, 
where $\theta_1$ is the coordinate on $\U(1)\simeq K_1$ we used in 
Sections \ref{Lansatz} and \ref{explicit}.

The involution $\Lambda:\CP^5\to\CP^5$ induces an isomorphism 
from $\CP^3$ with coordinates  
$(z_2,z_3,w_4,w_5)$ to $\CP^3$ with coordinates $(w_0,w_1,z_2,z_3)$, 
which is given by 
 \begin{align}\label{Lambda_on_CP^3*-s}
(z_2,z_3,w_4,w_5)\longmapsto
 (w_0,w_1,z_2,z_3)=
 (\beta w_4,-\beta w_5, -iz_3,i\alpha\beta z_2).
 \end{align}
This induces an isomorphism between  the dual spaces which is given by
 \begin{align}\label{dual2}
(c',d',a',b')\longmapsto  (a,b,c,d)=
 (i\alpha\beta b',-ia',\beta c',-\beta d').
 \end{align}
 In the above non-homogeneous coordinates on these two 
$(\mathbb{CP}^3)^*$-s, this can be written as
\begin{align}\label{dual3}
(b',c',d')\longmapsto (b,c,d)=
\left(-\frac{1}{\alpha\beta b'},-\frac{ic'}{\alpha b'},\frac{i d'}{\alpha b'}\right).
\end{align}
Restricting to the real locus, we obtain 
\begin{align}\label{dual4}
\mathbb R\times\mathbb C\ni(b',c')\longmapsto (b,c)=\left(-\frac{1}{\alpha\beta b'},
-\frac{ic'}{\alpha b'}\right)\in\mathbb R\times\CC.
\end{align}
In particular, $\Lambda^*c'=-i c'/(\alpha b')$.
Therefore under $\Lambda^*$,  the two angular coordinates $\theta_1$ and $\theta_3$ are 
related by  $\theta_1=\theta_3+(3\pi/2)$.
Equivalently, this says that the angular action induced by $\Lambda$ is given 
by
\begin{align}
(\theta_1, \theta_3) \mapsto (\theta_3 - \pi/2, \theta_1 + \pi/2). 
\end{align}
Since the angular map induced by $\Lambda$ is orientation-reversing, 
the induced hyperbolic isometry must also be orientation-reversing.  
Since the map $L(\zeta)$ defined above in \eqref{mobius} is the 
unique orientation-reversing isometry with the correct properties 
(see Remark \ref{Lunique}), $\Lambda$ must induce the map 
$\tilde{\Lambda}(\pi/2)$. This clearly implies that 
$\Lambda({\vartheta})$ induces the map $\tilde{\Lambda}(\vartheta + \pi/2)$,
and the proof is complete. 
\end{proof}

\end{document}